      \newtheorem{theorem}{Theorem}[section]
      \newtheorem{remark}[theorem]{Remark}
      \newcommand{\ct}[1]{\langle {#1}\rangle \lower.3ex\hbox{$_{t}$}}
      \newcommand{\lt}[1]{[ {#1}] \lower.3ex\hbox{$_{t}$}}
\begin{document}

\title[Towards Conformal Capacities in Euclidean Spaces]{Towards Conformal Capacities in Euclidean Spaces}

\author{Jie Xiao}
\address{Department of Mathematics and Statistics, Memorial University of Newfoundland, St. John's, NL A1C 5S7, Canada}
\email{jxiao@mun.ca}
\thanks{This project was in part supported by MUN's University Research Professorship (208227463102000) and NSERC of Canada.}

\subjclass[2010]{53A30, 35J92}

\date{}


\keywords{}

\maketitle

\tableofcontents

\section{Introduction}\label{s1}
\setcounter{equation}{0}

   \subsection{A background}\label{s10} Thanks to its role in two-dimensional potential theory that is the study of planar harmonic functions in mathematics and mathematical physics, the conformal or logarithmic capacity in the Euclidean plane $\mathbb R^2$ has been studied systemically; see \cite{Je96b}, \cite{AikE}, \cite{Hil}, \cite{ Lan}, \cite{Ran} and \cite{SafT} for some relatively recent publications on this topic. However, the higher dimensional extension (i.e., to the Euclidean space $\mathbb R^n$, $n\ge 3$) of the planar conformal capacity has received relatively little attention due to a nonlinear nature; see \cite{Bet}, \cite{ColC}, \cite{Fug1, Fug2}, \cite{AndV} and \cite{Kol} (see also \cite{AdH} and \cite{HKM} for some function-space-based capacities) only because of the author's limited knowledge of other references.

\subsection{The first definition}\label{s11} In his 2010 paper \cite{Bet}, Betsakos introduced the concept of the reduced conformal modulus of a compact subset of $\mathbb R^n$. To be more precise, let us recall some notations. Given $n\ge 2$. A pair $(O,K)$ of sets in $\mathbb R^n$ is said to be a condenser if $O$ is open and $K$ is a nonempty compact subset of $O$. The well-known conformal capacity of a given condenser $(O,K)$ is defined as
\begin{equation}\label{e11}
\hbox{ncap}(O,K)=\inf_{u\in W(O,K)}\int_{\mathbb R^n} |\nabla u|^n\,dV
\end{equation}
where $dV$ and $W(O,K)$ stand respectively for the volume element
and all functions $u: \mathbb R^n\to\mathbb R^1$ that are not only
continuous and absolutely continuous on almost all lines in each
cube in $\mathbb R^n$ parallel to the coordinates axes, but also
enjoy $0\le u\le 1$, $u|_K=0$, and the closure of $\{x\in\mathbb
R^n: 0\le u(x)<1\}$ is a compact subset of $A$. According to Gehring
\cite{Ge1}, we see that if the infimum in (\ref{e11}) is finite then
there is a unique extremal function (or $n$-capacity potential) $u$ enjoying the conformally invariant Euler-Lagrange equation in a weak sense:

\begin{equation}\label{e12}
-\hbox{div}(|\nabla u|^{n-2}\nabla u)=0\quad\mbox{in}\quad O\setminus{K}
\end{equation}
subject to
$$
\begin{cases}
u=1\quad\mbox{on}\quad {\partial K};\\
u=0\quad\mbox{on}\quad {\partial O},
\end{cases}
$$
and hence an integration-by-part gives (cf. \cite{PhiP})
\begin{equation}\label{e13}
\hbox{ncap}(O,K)=\int_{\{x\in\overline{O\setminus K}:\ u(x)=t\}}|\nabla
u|^{n-1}\,dS\quad\forall\quad t\in [0,1],
\end{equation}
where $dS$ represents the area element, i.e., the $n-1$ dimensional Hausdorff measure. Now, the conformal modulus of the condenser $(O,K)$ is determined by
\begin{equation}\label{e14}
\hbox{nmod}(O,K)=\left(\frac{\hbox{ncap}(O,K)}{\sigma_{n-1}}\right)^\frac1{1-n}
\end{equation}
where
$$
\sigma_{n-1}=n\omega_n=\frac{2\pi^\frac{n}{2}}{\Gamma(\frac{n}{2})}
$$
for which $\Gamma(\cdot)$ is the usual Gamma function. The subadditivity of the conformal modulus (cf. \cite[Lemma 2.1]{Flu}, \cite[pages 159-161]{AndVV}, \cite[Appendix]{MarRSY}) yields that if $K\subseteq r\mathbb B^n$ then
$$
r\mapsto \hbox{nmod}(r\mathbb B^n, K)-\ln r
$$
is an increasing function, where $r\mathbb B^n$ is the open ball centered at the origin with radius $r$, i.e., the $r$-expansion of the unit open ball $\mathbb B^n$. Consequently, a normalized version of the Betsakos reduced conformal modulus of a given compact set $K\subseteq\mathbb R^n$ is determined via
\begin{equation}\label{e15}
\hbox{ncap}_1(K)=\exp\Big(-\lim_{r\to\infty}\big(\hbox{nmod}(r\mathbb
B^n, K)-\ln r\big)\Big).
\end{equation}
Since the case $n=2$ of (\ref{e15}) is just the conformal capacity
of $K\subseteq\mathbb R^2$, we may regard $\hbox{ncap}_1(K)$ as the
conformal capacity of $K\subseteq\mathbb R^n$ in dimension $n$.

\subsection{The second definition}\label{s12} In
their 2005 work \cite{ColC}, Colosanti-Cuoghi used the equilibrium
potential to introduce another conformal capacity. To see this,
from now on, for the closure $K$ of a bounded open subset of $\mathbb R^n$ let $u=u_K$ be its $n$-equilibrium potential, i.e., the
unique weak solution to the following boundary value problem:

\begin{equation}\label{e16}
\left\{
\begin{array}{lll}
-\hbox{div}(|\nabla u|^{n-2}\nabla u) =0\quad\mbox{in}\quad \mathbb R^n\setminus{K};\\
\\
u=0\quad\mbox{on}\quad \partial K\quad\& \quad u(x)\sim\ln
|x|\quad\hbox{as}\quad |x|\to\infty,
\end{array}
\right.
\end{equation}
where $\sim$ means that there exists a constant $c>0$ such that
$$
c^{-1}\le\frac{u(x)}{\ln|x|}\le c\quad\hbox{as}\quad |x|\to\infty.
$$
In accordance with Kichenassamy-Veron's \cite[Theorem 1.1 and Remarks 1.4-1.5]{KicV},
$u(x)-\ln|x|$ tends to a constant depending on $K$ as
$|x|\to\infty$, and so the following
\begin{equation}\label{e17}
\hbox{ncap}_2(K)=\exp\Big(-\lim_{|x|\to\infty}\big(u(x)-\ln|x|\big)\Big)
\end{equation}
is employed to define the second conformal capacity of $K$ since
the case $n=2$ of (\ref{e17}) is just the conformal capacity on
$\mathbb R^2$.

\subsection{The third definition}\label{s13} For a compact
subset $K$ of $\mathbb R^n$, let
\begin{equation}\label{e18}
\hbox{nrob}(K)=\inf_\mu\int_K\int_K\Big(\ln\frac1{|x-y|}\Big)\,d\mu(x)d\mu(y)
\end{equation}
be the conformal or logarithmic Robin mass of $K$, where the infimum ranges
over all unit nonnegative Borel measures $\mu$ in $\mathbb R^n$ with
its support in $K$, and actually, this infimum is attainable.
According to Anderson-Vamanamurthy's 1988 article \cite{AndV} and
Fuglede's 1960 papers \cite{Fug1, Fug2}, the potential-theoretic
conformal capacity of $K$ is:
\begin{equation}\label{e19}
\hbox{ncap}_3(K)=\exp\big(-\hbox{nrob}(K)\big).
\end{equation}
Of course, when $n=2$, (\ref{e19}) coincides (\ref{e17}) and
(\ref{e15}).

\subsection{An overview}\label{s14}
In this paper we will analyze seven problems which are associated with the above-introduced conformal capacities. First of all, we prove that (\ref{e15}) is the same as (\ref{e17}) for $K$ being the closure of a bounded open subset of $\mathbb R^n$ (cf. Theorem \ref{t21}), but not the same as (\ref{e18}) unless $n=2$ (cf. Theorem \ref{t22}). Secondly, we use (\ref{e15}) to split the iso-diameter and iso-mean-width inequalities (cf. Theorems \ref{t31} \& \ref{t32} \& \ref{t33}). Thirdly, we handle conformal capacity from above through an integral of the mean curvature (cf. Theorem \ref{t4a1}) and the graphic ADM mass (cf. Theorem \ref{t4a2}). Fourthly, we give an integral identity and a lower bound estimate for the non-tangential limit of the gradient of an $n$-equilibrium potential on the boundary of a convex body (cf. Theorems \ref{p-41} \& \ref{l5c}). Fifthly, we establish Hadamard's variational formula for (\ref{e17}) (cf. Theorem \ref{t41g}). Sixthly, we deal with the existence and uniqueness for the Minkowski type problem arising from (\ref{e15})/(\ref{e17}) (cf. Theorem \ref{t53} extending Jerison's \cite[Corollary 6.6]{Je96b}). Last of all, we discuss Yau's prescribed mean curvature problem \cite[Problem 59]{Yau} in a weak sense and then get its conformal capacity analogue (cf. Theorem \ref{t71}). Here it is perhaps appropriate to point out that since our extension is from linear case $n=2$ (where the classical $2=n$-harmonic functions are often taken into account) to nonlinear case $n\ge 3$ (where only the nonlinear $3\le n$-harmonic functions can be used), in many situations we have to seek a way, which turns out to be highly non-trivial, to settle these issues.

\medskip

\noindent{\bf Acknowledgments}. The author is grateful to David Jerison for his very helpful comments on this work.

\section{Comparisons among three conformal capacities}\label{s2}
\setcounter{equation}{0}

\subsection{Comparing the first and second conformal capacities}\label{s21} We
always have the following identification.

\begin{theorem}\label{t21} Let $K$ be the closure of a bounded open subset of $\mathbb
R^n$. Then $\hbox{ncap}_1(K)=\hbox{ncap}_2(K)$.
\end{theorem}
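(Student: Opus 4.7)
The plan is to interpolate between the ball-condenser family from (\ref{e15}) and the unbounded equilibrium potential of (\ref{e16})--(\ref{e17}) through a one-parameter family of sublevel-set condensers whose modulus can be computed exactly, and then to squeeze these between ball condensers using the Kichenassamy--Veron asymptotic.

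Let $u$ be the $n$-equilibrium potential of $K$ from (\ref{e16}), so that $c_K:=-\ln\hbox{ncap}_2(K)=\lim_{|x|\to\infty}(u(x)-\ln|x|)$. For each $t_0>0$, form the bounded open neighborhood
$$
D_{t_0}=\hbox{int}(K)\cup\partial K\cup\{x\in\mathbb{R}^n\setminus K:\ u(x)<t_0\}
$$
of $K$, with $\partial D_{t_0}=\{u=t_0\}$. The rescaling $u/t_0$ is $n$-harmonic in $D_{t_0}\setminus K$ with boundary values $0$ on $\partial K$ and $1$ on $\partial D_{t_0}$, so by Gehring's uniqueness theorem it is the $n$-capacity potential of the condenser $(D_{t_0},K)$. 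Inserting this into (\ref{e13}) at $t=0$ and using that $|\nabla u|^{n-2}\nabla u$ is divergence-free in $\mathbb{R}^n\setminus K$, with flux equal to $\sigma_{n-1}$ through any large sphere $\partial(R\mathbb{B}^n)$ (by Kichenassamy--Veron's asymptotic), I obtain
$$
\hbox{ncap}(D_{t_0},K)=t_0^{1-n}\int_{\partial K}|\nabla u|^{n-1}\,dS=t_0^{1-n}\sigma_{n-1},
$$
so that (\ref{e14}) yields the key exact identity $\hbox{nmod}(D_{t_0},K)=t_0$.

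Next, set $r_\pm(t_0)=\min/\max\{|x|:u(x)=t_0\}$, so that $r_-(t_0)\mathbb{B}^n\subset D_{t_0}\subset r_+(t_0)\mathbb{B}^n$. Monotonicity of the modulus in the outer domain (immediate from (\ref{e11}), since $W(O_1,K)\subset W(O_2,K)$ whenever $O_1\subset O_2$) then gives
$$
M_{r_-(t_0)}\le t_0\le M_{r_+(t_0)},\qquad M_r:=\hbox{nmod}(r\mathbb{B}^n,K).
$$
The asymptotic $u(x)=\ln|x|+c_K+o(1)$ forces $\ln r_\pm(t_0)=t_0-c_K+o(1)$ as $t_0\to\infty$, and hence $M_{r_-(t_0)}-\ln r_-(t_0)\le c_K+o(1)$ while $M_{r_+(t_0)}-\ln r_+(t_0)\ge c_K+o(1)$. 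Since $r\mapsto M_r-\ln r$ is monotone increasing (recorded just after (\ref{e14})), its limit $L=\lim_{r\to\infty}(M_r-\ln r)$ exists in $\mathbb{R}\cup\{+\infty\}$ and, squeezed by these two inequalities as $t_0\to\infty$, must equal $c_K$. Therefore $\hbox{ncap}_1(K)=\exp(-L)=\exp(-c_K)=\hbox{ncap}_2(K)$.

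The most delicate step will be the flux identity $\int_{\partial K}|\nabla u|^{n-1}\,dS=\sigma_{n-1}$, since $\partial K$ need not be smooth and $u$ is only known to be $C^{1,\alpha}$ in the interior. I would handle it by replacing $\partial K$ with the regular sublevel surfaces $\{u=\varepsilon\}$, which are smooth for a.e.\ small $\varepsilon$ by Sard's theorem, applying the divergence theorem on the annular region $\{\varepsilon<u<t\}$, and then using the $t$-constancy of $\int_{\{u=t\}}|\nabla u|^{n-1}\,dS$ together with an asymptotic computation on large spheres $\{|x|=R\}$ (where $u\sim\ln|x|$) to pin the common value at $\sigma_{n-1}$.
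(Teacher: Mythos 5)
Your proof is correct, but it takes a genuinely different route from the paper's. The paper works directly with the family of bounded boundary-value problems (\ref{e21}) on $r\mathbb{B}^n\setminus K$, invokes the convergence $u_r\to u$ from Colesanti--Cuoghi's \cite[Theorem~2.2]{ColC} to pass to the equilibrium potential of (\ref{e16}), normalizes $v_r=u/\max_{\partial r\mathbb{B}^n}u$, and computes $\hbox{ncap}(r\mathbb{B}^n,K)$ by integrating $|\nabla v_r|^{n-1}$ over a level set using the Kichenassamy--Veron asymptotics. You instead start from the unbounded equilibrium potential itself, observe that on each sublevel condenser $(D_{t_0},K)$ the rescaling $u/t_0$ is the exact $n$-capacity potential, and deduce the clean identity $\hbox{nmod}(D_{t_0},K)=t_0$ from the flux constancy $\int_{\{u=\cdot\}}|\nabla u|^{n-1}\,dS=\sigma_{n-1}$; you then sandwich $r_-(t_0)\mathbb{B}^n\subset D_{t_0}\subset r_+(t_0)\mathbb{B}^n$ and use the monotonicity of $r\mapsto\hbox{nmod}(r\mathbb{B}^n,K)-\ln r$ together with $\ln r_\pm(t_0)=t_0-c_K+o(1)$ to squeeze the limit. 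What this buys: your argument bypasses the compactness/convergence step from \cite{ColC} and replaces the paper's asymptotic bookkeeping of error terms $o(1)$ inside an $(n{-}1)$-th power by an exact modulus computation plus a two-sided comparison, which is tighter and more transparent; it relies only on the Kichenassamy--Veron expansion (\ref{e22}) and the flux identity. Your concern about the flux integral on the possibly nonsmooth $\partial K$ is sound, but note you can sidestep it entirely by applying (\ref{e13}) at a regular value $t\in(0,1)$ rather than at $t=0$: for a.e.\ $t$ the level set $\{u=tt_0\}$ is a smooth hypersurface by Sard's theorem and $C^{1,\alpha}$ interior regularity, and the flux there already equals $\sigma_{n-1}$. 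One small hygiene remark: you should place the origin in $\hbox{int}(K)$ (harmless by translation invariance of both definitions) so that $r_-(t_0)\mathbb{B}^n$ is a connected set containing $K$ and disjoint from $\{u=t_0\}$, ensuring $r_-(t_0)\mathbb{B}^n\subset D_{t_0}$.
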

\begin{proof} Given an $r\in (0,\infty)$ large enough for $K\subseteq r\mathbb B^n$, let $u_r$
be the unique solution to

\begin{equation}\label{e21}
\left\{
\begin{array}{lll}
-\hbox{div}(|\nabla u_r|^{n-2}\nabla u_r) =0\quad\mbox{in}\quad r\mathbb B^n\setminus{K};\\
\\
u_r=0\quad\mbox{on}\quad \partial K\quad\& \quad u_r(x)=\ln
r\quad\hbox{as}\quad |x|=r,
\end{array}
\right.
\end{equation}
According to the argument for \cite[Theorem 2.2]{ColC}, $\{u_r\}$
has a subsequence, still denoted by $\{u_r\}$, convergent to $u$
which is the unique weak solution of (\ref{e16}) and makes
$\alpha=\lim_{|x|\to\infty}\big(u(x)-\ln |x|\big)$ be finite. According to \cite{KicV}, we have that if $|x|\to\infty$ then
\begin{equation}\label{e22}
u(x)=\ln|x|+\alpha +o(1)\quad \&\quad |\nabla u(x)|=|x|^{-1}+o(|x|^{-1}).
\end{equation}
Consequently, by the maximum principle one has
$$
0<u(x)\le\max_{\partial r\mathbb B^n}u\quad\forall\quad x\in r\mathbb B^n.
$$
If
$$
v_{r}(x)=\frac{u(x)}{\max_{\partial r\mathbb B^n}u}
$$
then for the sufficiently large $r$,
\begin{equation}\label{e23}
\left\{
\begin{array}{lll}
-\hbox{div}(|\nabla v_{r}|^{n-2}\nabla v_{r})=0\quad\mbox{in}\quad r\mathbb B^n\setminus{K};\\
\\
v_{r}=0\quad\mbox{on}\quad \partial K\quad\& \quad 0\le
v_{r}(x)\le 1\quad\hbox{as}\quad |x|=r,
\end{array}
\right.
\end{equation}
and hence the uniqueness of $u$ plus (\ref{e22}) implies that for
$$
r=\exp\Big((\frac{1+t}{1-t})o(1)-\alpha\Big)\quad\hbox{as}\quad 0<t\to 1,
$$
one has
\begin{eqnarray*}
\hbox{ncap}(r\mathbb B^n,K)&=&\int_{\{x\in\mathbb R^n: v_{r}(x)=t\}}|\nabla
v_{r}|^{n-1}\,dS\\
&=&\int_{\partial r\mathbb B^n}\left(\frac{1+o(1)}{r(\ln r+\alpha+o(1))}\right)^{n-1}\,dS\\
&=&\sigma_{n-1}\left(\frac{1+o(1)}{\ln
r+\alpha+o(1)}\right)^{n-1}.
\end{eqnarray*}
An application of (\ref{e14}) yields
$$
\hbox{nmod}(r\mathbb B^n,K)=\frac{\ln r +\alpha
+{o}(1)}{1+{o}(1)},
$$
thereby deriving through using (\ref{e15}) plus letting $r\to\infty$,
$$
\hbox{ncap}_1(K)=\exp(-\alpha)=\hbox{ncap}_2(K).
$$
\end{proof}

\subsection{Comparing the first and third conformal capacities}\label{s22} For a condenser $(O,K)$ in $\mathbb R^n$ we write
$$
\hbox{nmd}(O,K)=\inf_{\nu\in\mathcal
F(O,K)}\int_{\mathbb R^n}\int_{\mathbb R^n}\Big(\ln\frac1{|x-y|}\Big)\,d\nu(x)d\nu(y)
$$
for the transfinite $n$-modulus of $(O,K)$, where $\mathcal F(O,K)$
is the family of all signed Borel measures $\nu=\nu_K-\nu_O$ with
$\nu_O$ and $\nu_K$ being unit nonnegative Borel measures on $\mathbb
R^n$. The above infimum is attainable provided it is finite; see
also \cite[Lemma 2]{AndV}.

\begin{theorem}\label{t22} Let $\overline{\mathbb B^n}$ be the closed unit ball of $\mathbb R^n$.

\item{\rm(i)} If $K$ is a compact subset of $\mathbb R^n$ then
$$
\hbox{ncap}_3(K)=\frac{\exp\Big(-\lim_{r\to\infty}\big(\hbox{nmd}(r\mathbb
B^n,K)-\ln r\Big)}{\hbox{ncap}_3(\overline{\mathbb
B^n})}.
$$

\item{\rm(ii)} If $r>0$ and $\sigma_0=0$ then
$$
\hbox{ncap}_1(r\overline{\mathbb
B^n})=r\ \&\ \hbox{ncap}_3(r\overline{\mathbb
B^n})=r\exp\left(-\frac{\sigma_{n-2}}{2\sigma_{n-1}}\int_0^\pi\frac{\ln\big(2(1-\cos\theta)\big)}{(\sin\theta)^{2-n}}\,d\theta\right)
$$
and hence both are not equal unless $n=2$.
\end{theorem}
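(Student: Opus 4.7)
My plan treats parts (i) and (ii) separately. For (i), the target reduces to the asymptotic identity
\[
\lim_{r\to\infty}\bigl(\hbox{nmd}(r\mathbb B^n,K)-\ln r\bigr)=\hbox{nrob}(K)+\hbox{nrob}(\overline{\mathbb B^n}),
\]
since exponentiating and using (\ref{e19}) then produces the displayed formula. I would expand $E(\nu):=\int\int\ln(1/|x-y|)\,d\nu\,d\nu$ for $\nu=\nu_K-\nu_O$ as $E(\nu_K)+E(\nu_O)-2I(\nu_K,\nu_O)$, where $I(\mu,\sigma)=\int\int\ln(1/|x-y|)\,d\mu\,d\sigma$. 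For the upper bound, test with $\nu_K$ the Robin extremal on $K$ and $\nu_O$ the Robin extremal associated with $r\overline{\mathbb B^n}$ (identified below in (ii) as the uniform probability measure on $\{|x|=r\}$). Scaling of the Robin mass gives $E(\nu_O)=-\ln r+\hbox{nrob}(\overline{\mathbb B^n})$, and since $K$ is bounded while $\nu_O$ lives at distance exactly $r$ from the origin, a Taylor expansion of $\ln|x-y|$ at $|y|=r$ yields $I(\nu_K,\nu_O)=-\ln r+O(1/r)$; combining these shows $\hbox{nmd}(r\mathbb B^n,K)-\ln r\le \hbox{nrob}(K)+\hbox{nrob}(\overline{\mathbb B^n})+O(1/r)$.

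For the matching lower bound, I would apply weak-$*$ compactness to the minimizers $\nu^{(r)}=\nu_K^{(r)}-\nu_O^{(r)}$ — pulling the outer factor back by $r^{-1}$ so that it lives on a fixed compact set — together with lower semicontinuity of the logarithmic energy restricted to measures of total mass zero; subsequential limits are admissible probability measures for the Robin problems on $K$ and on $\overline{\mathbb B^n}$, so the inequality transfers in the limit. This compactness and lower-semicontinuity step for signed logarithmic energies is the chief technical obstacle; the necessary infrastructure is collected in \cite[Lemma 2]{AndV}.

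For part (ii), I compute each capacity directly. For $\hbox{ncap}_1(r\overline{\mathbb B^n})$, Theorem \ref{t21} reduces the task to computing $\hbox{ncap}_2(r\overline{\mathbb B^n})$, i.e., solving (\ref{e16}) with $K=r\overline{\mathbb B^n}$. Rotational symmetry turns the $n$-Laplace equation on $\{|x|\ge r\}$ into the ODE $(s^{n-1}|f'|^{n-2}f')'=0$ for $u(x)=f(|x|)$; the boundary data $f(r)=0$ and $f(s)\sim\ln s$ at infinity force $f(s)=\ln(s/r)$, so $\alpha=\lim_{|x|\to\infty}(u(x)-\ln|x|)=-\ln r$ and hence $\hbox{ncap}_1(r\overline{\mathbb B^n})=e^{-\alpha}=r$.

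For $\hbox{ncap}_3(r\overline{\mathbb B^n})$, I first identify the Robin extremal. Since $\ln(1/|x|)$ is superharmonic in $\mathbb R^n$ (indeed $\Delta\ln(1/|x|)=(2-n)/|x|^2\le 0$), the potential $U^{\mu^*}$ of the normalized uniform surface measure $\mu^*$ on $\{|x|=r\}$ is superharmonic inside $r\mathbb B^n$; the minimum principle gives $\min_{r\overline{\mathbb B^n}}U^{\mu^*}=U^{\mu^*}|_{\{|x|=r\}}$, and rotational symmetry makes the right-hand side a constant, which verifies the variational condition and identifies $\mu^*$ as the Robin extremal. Scaling then gives $\hbox{nrob}(r\overline{\mathbb B^n})=-\ln r+\hbox{nrob}(\overline{\mathbb B^n})$, and evaluating $\hbox{nrob}(\overline{\mathbb B^n})$ by fixing one point on $\{|x|=1\}$ and parameterizing the other by its co-latitude $\theta$ — with angular density $\sigma_{n-2}(\sin\theta)^{n-2}/\sigma_{n-1}$ and $|e_n-y|^2=2(1-\cos\theta)$ — produces the claimed angular integral. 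Finally, the identity $\hbox{ncap}_1(r\overline{\mathbb B^n})=\hbox{ncap}_3(r\overline{\mathbb B^n})$ is equivalent to the vanishing of $\int_0^\pi\ln(2(1-\cos\theta))(\sin\theta)^{n-2}\,d\theta$; this holds for $n=2$ via $2(1-\cos\theta)=4\sin^2(\theta/2)$ together with the classical $\int_0^{\pi/2}\ln\sin u\,du=-\tfrac{\pi}{2}\ln 2$, but fails for $n\ge 3$, as the substitution $u=1-\cos\theta$ shows already at $n=3$ (giving $4\ln 2-2\ne 0$).
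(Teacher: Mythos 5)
For part (i), your reduction to the asymptotic identity
\[
\lim_{r\to\infty}\bigl(\hbox{nmd}(r\mathbb B^n,K)-\ln r\bigr)=\hbox{nrob}(K)+\hbox{nrob}(\overline{\mathbb B^n})
\]
is exactly the right target, and your upper bound by testing with the product of Robin extremals on $K$ and on $\partial(r\mathbb B^n)$ is sound; the cross-term expansion $I(\nu_K,\nu_O)=-\ln r+O(1/r)$ for bounded $K$ against a unit measure on $\{|y|=r\}$ is correct. The paper instead obtains both directions at once from the two-sided estimate
$2\ln m(r)\le\hbox{nmd}(r\mathbb B^n,K)-\hbox{nrob}(r\overline{\mathbb B^n})-\hbox{nrob}(K)\le 2\ln M(r)$
cited from \cite[Theorem 5]{AndV}; this is essentially your two bounds repackaged. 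The genuine gap is in your lower bound. You assert that pulling the outer factor back by $r^{-1}$ makes it ``live on a fixed compact set,'' but $\nu_O$ is a probability measure supported in $\mathbb R^n\setminus r\mathbb B^n$, whose $r^{-1}$-dilate is supported in $\mathbb R^n\setminus\mathbb B^n$, which is \emph{not} compact. Weak-$*$ compactness therefore cannot be invoked directly; you need a tightness argument ruling out mass escaping to infinity, for instance by showing radial projection of $\nu_O$ onto $\partial(r\mathbb B^n)$ does not increase the energy (as the balance between $E(\nu_O)$ and $-2I(\nu_K,\nu_O)$ suggests), or by simply quoting the sandwich of \cite[Theorem 5]{AndV}, which sidesteps the compactness issue entirely.

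For part (ii), your route is genuinely different from the paper's and is more self-contained. The paper deduces both identities from the annulus modulus $\hbox{nmod}(r\mathbb B^n,\overline{\mathbb B^n})=\ln r$, the transfinite-modulus formula for the annulus (quoted from \cite[Theorem 6]{AndV}), and part (i). You instead compute $\hbox{ncap}_1(r\overline{\mathbb B^n})$ by solving the radial $n$-Laplace ODE directly (passing through $\hbox{ncap}_2$ via Theorem \ref{t21}), and compute $\hbox{ncap}_3(r\overline{\mathbb B^n})$ by identifying the Robin extremal and performing the surface integral. The density $\sigma_{n-2}(\sin\theta)^{n-2}/\sigma_{n-1}$, the chord length $|e_n-y|^2=2(1-\cos\theta)$, the ODE solution $f(s)=\ln(s/r)$, and the evaluations at $n=2,3$ are all correct. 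One point you should make explicit: the passage from ``$U^{\mu^*}$ is constant on the sphere and $\ge$ that constant on the ball'' to ``$\mu^*$ minimizes the energy over probability measures on $\overline{\mathbb B^n}$'' needs that the logarithmic kernel on $\mathbb R^n$ is conditionally positive definite on signed measures of total mass zero (it is, as a limit of the positive-definite Riesz kernels $|x-y|^{-\alpha}$ as $\alpha\to0^+$, but since $\ln(1/|x-y|)$ is no longer the fundamental solution when $n\ge3$, the classical Frostman setup does not apply verbatim). Finally observe that both your direct computation and the paper's own proof yield
\[
\hbox{ncap}_3(r\overline{\mathbb B^n})=r\exp\!\left(+\frac{\sigma_{n-2}}{2\sigma_{n-1}}\int_0^\pi(\sin\theta)^{n-2}\ln\bigl(2(1-\cos\theta)\bigr)\,d\theta\right),
\]
with a $+$ sign in the exponent, whereas the theorem statement shows a $-$; this appears to be a typo in the displayed statement, and for $n\ge3$ the integral is positive so $\hbox{ncap}_3(r\overline{\mathbb B^n})>r=\hbox{ncap}_1(r\overline{\mathbb B^n})$.
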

\begin{proof} (i) This follows from
$$
\hbox{nrob}(r\overline{\mathbb B^n})=-\ln r +\hbox{nrob}(\overline{\mathbb B^n}).
$$
and the second chain of inequalities in \cite[Theorem 5]{AndV}, in
particular:
$$
2\ln
m(r)\le\hbox{nmd}(r\overline{\mathbb B^n},K)-\hbox{nrob}(r\overline{\mathbb B^n})-\hbox{nrob}(K)\le 2\ln M(r)
$$
for which as $r\to\infty$ one has
$$
\begin{cases}
& m(r)=\inf_{x\in K, |y|=r}|x-y|=\ln r+{o}(1);\\
& M(r)=\sup_{x\in K, |y|=r}|x-y|=\ln r+{o}(1).
\end{cases}
$$

(ii) Note that for $r>1$ one has
\begin{equation}\label{e24}
\hbox{nmod}(r\mathbb B^n,\overline{\mathbb B^n})=\ln r
\end{equation}
and a slight modification of the argument for \cite[Theorem 6]{AndV}
gives
\begin{equation}\label{e25}
\hbox{nmd}(r\mathbb B^n,\overline{\mathbb B^n})=\ln
r+\frac{\sigma_{n-2}}{\sigma_{n-1}}\int_0^\pi(\sin
\theta)^{n-2}\ln\frac{1+r^2-2r\cos\theta}{2r^2(1-\cos\theta)}\,d\theta.
\end{equation}
So, (\ref{e24}) and (\ref{e25}) imply
\begin{eqnarray}\label{e26}
0&\le&\hbox{nmod}(r\mathbb B^n,\overline{\mathbb
B^n})-\hbox{nmd}(r\mathbb B^n,\overline{\mathbb B^n})\nonumber\\
&\to& \frac{\sigma_{n-2}}{\sigma_{n-1}}\int_0^\pi(\sin
\theta)^{n-2}\ln\big(2(1-\cos\theta)\big)\,d\theta\\
&\hbox{as}& r\to\infty\nonumber.
\end{eqnarray}
Now, (\ref{e26}), the formula in (i), and the first and third
definitions of the conformal capacity are used to deduce the
desired estimate in (ii).
\end{proof}

\begin{remark}\label{r13} Thanks to Theorems \ref{t21}-\ref{t22}, in the forthcoming sections the notation $\hbox{ncap}(\cdot)$ stands for either $\hbox{ncap}_1(\cdot)$ or $\hbox{ncap}_2(\cdot)$, but not $\hbox{ncap}_3(\cdot)$ which will not be used again. Of course, for an open set $O\subseteq\mathbb R^n$ we may define
$$
\hbox{ncap}(O)=\sup\{\hbox{ncap}(K):\ \hbox{compact}\ K\subseteq O\}
$$
and
$$
\hbox{ncap}(E)=\inf\{\hbox{ncap}(O):\ \hbox{open}\ O\supseteq E\}
$$
for an arbitrary set $E\subset\mathbb R^n$. Now, it is easy to see that $\hbox{ncap}(\cdot)$ is a monotone and continuous set function. However, there is an example showing that $\hbox{2cap}(\cdot)$ is not subadditive (cf. \cite[page 60]{AikE} or \cite{Pyr}).
\end{remark}

\section{Iso-diameter and iso-mean-width inequalities via conformal capacities}\label{s3}
\setcounter{equation}{0}

\subsection{Iso-diameter inequality}\label{s31}
The iso-diameter or Bieberbach's inequality (cf. \cite[page 69]{EvaG} and \cite[page 318]{Schn}) says that if $K$ is a compact subset of $\mathbb R^n$ then its diameter $\hbox{diam}(K)$ and volume $V(K)$ enjoy the following inequality
\begin{equation}\label{e31}
\left(\frac{V(K)}{\omega_n}\right)^\frac1n\le \frac{\hbox{diam}(K)}{2}
\end{equation}
with equality if $K$ is a ball. Meanwhile, the isoperimetric inequality states that if $K$ is a compact sub-domain of $\mathbb R^n$ then its surface area $S(K)$ and volume $V(K)$ satisfy the isoperimetric inequality
\begin{equation}\label{e32}
\left(\frac{V(K)}{\omega_n}\right)^\frac1n\le \left(\frac{S(K)}{\sigma_{n-1}}\right)^\frac1{n-1}
\end{equation}
with equality when and only when $K$ is a ball. In their 2011 manuscript \cite{MagPP}, Maggi-Ponsiglione-Pratelli reproved that if $K$ is convex then
\begin{equation}\label{e33}
\left(\frac{V(K)}{\omega_n}\right)^\frac1n\le \left(\frac{S(K)}{\sigma_{n-1}}\right)^\frac1{n-1}\le \frac{\hbox{diam}(K)}{2}
\end{equation}
with two equalities if $K$ is a ball. Historically, the right inequality of (\ref{e33}) is called Kubota's inequality; see also \cite{Kub}.

In the sequel, we show that (\ref{e31}) can be split by using the conformal capacity, i.e.,
\begin{equation}\label{e34}
\left(\frac{V(K)}{\omega_n}\right)^\frac1n\le \hbox{ncap}(K)\le \frac{\hbox{diam}(K)}{2}
\end{equation}
with two equalities if $K$ is a ball.

\subsection{Volume to conformal capacity}\label{s32} Simply motivated by \cite{Pol}, \cite{EncP}, \cite{Kaw} and \cite{GarS}, we obtain the following assertion whose case $n=2$ is known (cf. \cite[Theorem 5.3.5]{Ran}).

\begin{theorem}\label{t31} Let $K$ be a compact subset of $\mathbb R^n$. Then
\begin{equation}\label{e35}
\left(\frac{V(K)}{\omega_n}\right)^\frac1n\le\hbox{ncap}(K)
\end{equation}
with equality if $K$ is a ball.
\end{theorem}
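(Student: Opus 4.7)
The plan is to prove \eqref{e35} by a Schwarz-symmetrization argument combined with Theorem~\ref{t22}(ii). Concretely, fix $r$ large enough so that $K\subset r\mathbb{B}^n$, and let $K^{*}$ denote the closed ball centered at the origin with $V(K^{*})=V(K)$; by Theorem~\ref{t22}(ii) we have $\hbox{ncap}_{1}(K^{*})=(V(K)/\omega_n)^{1/n}$. The goal is therefore to show the capacity monotonicity
\begin{equation*}
\hbox{ncap}_{1}(K^{*})\le \hbox{ncap}_{1}(K),
\end{equation*}
which together with the case of equality for balls gives \eqref{e35}.

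The mechanism will be: given any admissible $u\in W(r\mathbb{B}^n,K)$, set $v=1-u$ so that $v=1$ on $K$, $0\le v\le 1$, and $v$ vanishes outside a compact subset of $r\mathbb{B}^n$. Let $v^{*}$ be the symmetric decreasing rearrangement of $v$ on $\mathbb{R}^n$. Then $v^{*}$ is radially nonincreasing, compactly supported in $r\mathbb{B}^n$ (the support has the same Lebesgue measure as $\mathrm{supp}(v)$), and since $\{v=1\}\supseteq K$ has volume at least $V(K)$, the level set $\{v^{*}=1\}$ is a centered ball of volume at least $V(K)$ and hence contains $K^{*}$. Consequently $u^{*}:=1-v^{*}$ lies in $W(r\mathbb{B}^n,K^{*})$, and the classical P\'olya--Szeg\H{o} inequality for the $L^n$-norm of the gradient yields
\begin{equation*}
\int_{\mathbb{R}^n}|\nabla u^{*}|^{n}\,dV=\int_{\mathbb{R}^n}|\nabla v^{*}|^{n}\,dV\le \int_{\mathbb{R}^n}|\nabla v|^{n}\,dV=\int_{\mathbb{R}^n}|\nabla u|^{n}\,dV.
\end{equation*}
Taking the infimum over $u$ gives $\hbox{ncap}(r\mathbb{B}^n,K^{*})\le \hbox{ncap}(r\mathbb{B}^n,K)$; because the exponent $1/(1-n)$ in \eqref{e14} is negative, this reverses to $\hbox{nmod}(r\mathbb{B}^n,K^{*})\ge \hbox{nmod}(r\mathbb{B}^n,K)$. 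Subtracting $\ln r$ and letting $r\to\infty$ in \eqref{e15} gives $\hbox{ncap}_{1}(K^{*})\le \hbox{ncap}_{1}(K)$, and combining with $\hbox{ncap}_{1}(K^{*})=(V(K)/\omega_n)^{1/n}$ from Theorem~\ref{t22}(ii) produces \eqref{e35}. Equality when $K$ is a ball follows directly from $\hbox{ncap}_{1}(r\overline{\mathbb{B}^n})=r=(V(r\overline{\mathbb{B}^n})/\omega_n)^{1/n}$.

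The main technical point, and the step I would be most careful about, is verifying that the symmetrization produces an element of $W(r\mathbb{B}^n,K^{*})$ in the specific ACL/continuity sense used in \S\ref{s11}, and that the P\'olya--Szeg\H{o} inequality applies to it. Since the admissible $u$ can be truncated and smoothed without increasing the $L^n$-gradient norm, one may first replace $u$ by an approximating sequence of smooth compactly supported perturbations of $1$, apply the standard rearrangement results in $W^{1,n}(\mathbb{R}^n)$ to $v=1-u$, and then recover the inequality for the original infimum by density; the compactness of $\mathrm{supp}(v)$ inside $r\mathbb{B}^n$ ensures that all integrals and rearrangements stay within a fixed ball, so no boundary issues intervene. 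The trivial case $V(K)=0$ is immediate since then the right-hand side of \eqref{e35} is nonnegative.
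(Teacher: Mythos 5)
Your argument is essentially the paper's: both proofs Schwarz-symmetrize $K$, invoke the fact that symmetrization does not increase the conformal $n$-capacity of the condenser $(r\mathbb B^n,K)$, and then pass to the reduced-modulus limit in \eqref{e15}. The only cosmetic difference is that you re-derive the iso-capacitary inequality from the P\'olya--Szeg\H{o} inequality on rearranged potentials, whereas the paper simply cites it (Wang/Gehring/Mostow), and you evaluate $\hbox{ncap}_1$ of the symmetrized ball via Theorem~\ref{t22}(ii) rather than via the explicit ball--ball capacity formula.
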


\begin{proof} Clearly, equality of (\ref{e35}) occurs when $K$ is a ball. So, it remains to verify (\ref{e35}). Given an $r>0$ large enough for $K\subseteq r\mathbb B^n$. Suppose
$\hbox{Sch}(E)$ is the Schwarz symmetrization of $E\subseteq\mathbb
R^n$, i.e., the origin-centered ball with radius
$\big(V(E)/\omega_n\big)^\frac1n$. According to the iso-capacitary
inequality in \cite[Theorem 3.6]{Wan} which was originally
established in \cite{Ge0} (for $n=3$) and \cite{Mos} (for $n\ge
3$), we have
$$
\hbox{ncap}(r\mathbb B^n,K)\ge\hbox{ncap}\big(r\mathbb
B^n,\hbox{Sch}(K)\big)=\sigma_{n-1}\left(\ln
\frac{r}{\big(V(K)/\omega_n\big)^\frac1n}\right)^{1-n}
$$
thereby getting via (\ref{e14})
$$
\hbox{nmod}(r\mathbb
B^n,K)\le\ln\frac{r}{\big(V(K)/\omega_n\big)^\frac1n}.
$$
This, along with (\ref{e15}), yields
$$
\hbox{ncap}(K)\ge\Big(V(K)/\omega_n\Big)^\frac1n,
$$
as desired. The last inequality can be also proved by
\cite[Lemma 1]{Bet}.
\end{proof}

\begin{remark}\label{r31} Let $K$ be $C^2$ convex and $\nabla u_K$ still stand for its non-tangential limit at $\partial K$ (cf. \cite[Theorem 3]{LewN} and \cite[Theorem 4.3]{Lew}). If $|\nabla u_K|$ equals a positive constant $c$ on $\partial K$, then $c^{-1}=\big({V(K)}/{\omega_n}\big)^\frac1n$ and hence $|\nabla u_K|$ exists as a kind of weak mean curvature on the level surfaces of $u_K$. In fact, if $u=u_K$ then (\ref{e22}) gives that
$$
\begin{cases}
& u(x)=\ln |x|+\ln\hbox{ncap}(K)+ {o}(1)\\
& |\nabla u(x)|=|x|^{-1}\big(1+{o}(1)\big)
\end{cases}
\quad\mbox{as}\quad |x|\to\infty.
$$ 
Now, it follows from
$$
-\hbox{div}(|\nabla u|^{n-2}\nabla u)=0\ \ \hbox{in}\ \ \mathbb R^n\setminus K\ \ \&\ \ |\nabla u|_{\partial K}=c
$$
that if
$$
X=n(x\cdot\nabla u)|\nabla u|^{n-2}\nabla u-|\nabla u|^n x,
$$
$\nu$ stands for the outer unit normal vector, and $r\to\infty$, then
\begin{eqnarray*}
(n-1)nc^nV(K)&=&(1-n)\int_{\partial K}x\cdot \nabla u|\nabla u|^{n-1}\, dS\\
&=&\Big(\frac{n-1}{n}\Big)\int_{\partial K}X\cdot\nu\,dS\\
&=&\Big(\frac{n-1}{n}\Big)\int_{\partial r\mathbb B^n}X\cdot \nu\, dS\\
&=&(n-1)\sigma_{n-1}+{o}(1),
\end{eqnarray*}
and hence $c=\Big({\omega_n}/{V(K)}\Big)^\frac1n.$
\end{remark}

\subsection{Conformal capacity to diameter}\label{s33} To completely reach (\ref{e34}) we establish the following result whose (\ref{e36}) in the case $n=2$ is known (cf. \cite[Theorem 5.3.4]{Ran}).

\begin{theorem}\label{t32} Let $K$ be a compact subset of $\mathbb R^n$. Then
\begin{equation}\label{e36}
\hbox{ncap}(K)\le\frac{\hbox{diam}(K)}{2}
\end{equation}
with equality if $K$ is a ball.
\end{theorem}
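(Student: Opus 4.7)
The plan is to invoke the representation $\mathrm{ncap}(K)=e^{-\alpha}$ of Theorem~\ref{t21}, with $\alpha=\lim_{|x|\to\infty}(u_K(x)-\ln|x|)$, and recast (\ref{e36}) as the asymptotic lower bound $\alpha\ge-\ln(d/2)$ on the $n$-equilibrium potential of $K$, where $d=\mathrm{diam}(K)$. First I would reduce to $K$ convex (and then smooth strictly convex) using the monotonicity of $\mathrm{ncap}(\cdot)$ of Remark~\ref{r13} applied to $K\subseteq\mathrm{conv}(K)$ (which preserves the diameter), plus mollification and the continuity of $\mathrm{ncap}(\cdot)$. The equality statement is immediate from Theorem~\ref{t22}(ii): for $K=r\overline{\mathbb B^n}$ one has $\mathrm{ncap}(K)=r=\mathrm{diam}(K)/2$.

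The basic comparison tool is the fundamental kernel $w_p(x)=\ln|x-p|$, which is $n$-harmonic on $\mathbb R^n\setminus\{p\}$ (a direct computation showing $\mathrm{div}(|\nabla w_p|^{n-2}\nabla w_p)\equiv 0$ there) and asymptotic to $\ln|x|+o(1)$ as $|x|\to\infty$. Picking $x_0,y_0\in K$ with $|x_0-y_0|=d$ and applying the comparison principle for the $n$-Laplacian on the sweeping exhaustion $r\mathbb B^n\setminus K$ as $r\to\infty$ (mirroring the passage in the proof of Theorem~\ref{t21}) to the test function $w_{x_0}-\ln d$, which is non-positive on $\partial K$ by $|x-x_0|\le d$, one gets $u_K\ge w_{x_0}-\ln d$ and hence the weak estimate $\alpha\ge-\ln d$, i.e., only $\mathrm{ncap}(K)\le d$. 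Recovering the missing factor of $2$ in (\ref{e36}) forces one to use both diameter endpoints $x_0$ and $y_0$ simultaneously.

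The main obstacle is the nonlinearity of the $n$-Laplacian for $n\ge 3$: one cannot simply average the two $n$-harmonic kernels $w_{x_0}$ and $w_{y_0}$, a manoeuvre that is decisive in the classical $n=2$ argument of \cite[Theorem~5.3.4]{Ran} (whose proof rests on the linear structure of planar potential theory through conformal maps and the area theorem for univalent exterior maps, tools with no direct counterpart in $\mathbb R^n$ for $n\ge 3$). I would bypass this either by (i) a condenser-modulus symmetrization of the pair $(r\mathbb B^n,K)$ with respect to the diameter axis $[x_0,y_0]$, reducing to an axially symmetric body whose $n$-capacity can be compared directly with that of the extremal ball $\overline{B((x_0+y_0)/2,d/2)}$ via the explicit formula from Theorem~\ref{t22}(ii), or by (ii) a Betsakos-type geometric inequality in the spirit of \cite[Lemma~1]{Bet} already invoked in the proof of Theorem~\ref{t31}. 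Letting $r\to\infty$ in (\ref{e14}) and (\ref{e15}) then produces the sharp factor $\tfrac{1}{2}$. Verifying that the chosen symmetrization yields the correct monotonicity of the conformal modulus while respecting the diameter constraint is the technical core of the argument.
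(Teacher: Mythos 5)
Your weak estimate $\mathrm{ncap}(K)\le d$ via comparison with $w_{x_0}-\ln d$ is sound, and you correctly identify that recovering the missing factor $\tfrac{1}{2}$ is where the substance lies for $n\ge 3$. But neither bypass you suggest is carried out, and both are in tension with known facts. For (i), a symmetrization that \emph{increases} $\mathrm{ncap}$ while not increasing the diameter, terminating at the ball $\overline{B((x_0+y_0)/2,d/2)}$, is not among the standard symmetrizations: Schwarz symmetrization, as used in Theorem~\ref{t31}, moves $\mathrm{ncap}$ the \emph{opposite} way (it produces the ball of the same volume, which has \emph{smaller} $\mathrm{ncap}$), and Steiner-type symmetrizations also decrease modulus. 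So the monotonicity you would need must go against the grain of the usual capacity symmetrization calculus, and you would have to prove it from scratch. For (ii), the Betsakos-type inequality \cite[Lemma~1]{Bet} invoked in Theorem~\ref{t31} is again on the volume/iso-capacitary side, not the diameter side, and does not deliver $\mathrm{ncap}(K)\le\frac{d}{2}$. So the proposal has a genuine gap precisely at the step you flag as the technical core.

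The paper avoids the nonlinear-averaging obstruction entirely. It first reduces to $K$ convex (as you do), then bounds the condenser capacity from above via Gehring's length--area/modulus estimate \cite[Theorem~2]{Ge2}:
\[
\mathrm{ncap}(r\mathbb B^n,K)\le\Bigl(\int_0^T\bigl(S(K,t)\bigr)^{\frac{1}{1-n}}\,dt\Bigr)^{1-n},
\]
where $S(K,t)$ is the surface area of the $t$-tube $\{\mathrm{dist}(\cdot,K)=t\}$. The $t$-tube of a convex $K$ has diameter $d+2t$, and Kubota's inequality gives $S(K,t)/\sigma_{n-1}\le\bigl(\tfrac{d+2t}{2}\bigr)^{n-1}$. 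Substituting and integrating yields, after passing $r\to\infty$ in \eqref{e15}, exactly $\mathrm{ncap}(K)\le\tfrac{d}{2}$, with the factor $\tfrac{1}{2}$ emerging from Kubota rather than from any averaging of $n$-harmonic kernels. This is the convexity-plus-Kubota mechanism your sketch is missing.
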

\begin{proof} Clearly, if $K$ is a ball then (\ref{e35}) obtains its equality. So, it remains to verify (\ref{e36}). To do so, set $\hbox{dist}(x,K)=\inf_{y\in K}|x-y|$ and
$$
S(K,t)=S\big(\{x\in r\mathbb B^n\setminus K: \hbox{dist}(x,K)=t\}\big)\quad\forall\quad t>0.
$$
A special form of Gehring's \cite[Theorem 2]{Ge2} gives that if
$$
K\subseteq r\mathbb B^n\quad\&\quad T=\liminf_{x\to \mathbb R^n\setminus r\mathbb B^n}\hbox{dist}(x,K),
$$
then
$$
\hbox{ncap}(r\mathbb B^n,K)\le\left(\int_0^T \big(S(K,t)\big)^\frac{1}{1-n}\,dt\right)^{1-n},
$$
and hence
$$
\hbox{nmod}(r\mathbb B^n,K)\ge \int_0^T \Big(\frac{S(K,t)}{\sigma_{n-1}}\Big)^\frac{1}{1-n}\,dt.
$$
This derives

\begin{equation}\label{e37}
\hbox{ncap}(K)\le\exp\left(\lim_{r\to\infty}\Big(\ln r-\int_0^r \Big(\frac{S(K,t)}{\sigma_{n-1}}\Big)^\frac{1}{1-n}\,\Big)dt\right).
\end{equation}

Note that if $\hat{K}$ is the convex hull of $K$ then
$$
\hbox{ncap}(K)\le\hbox{ncap}(\hat{K})\quad\&\quad\hbox{diam}(K)=\hbox{diam}(\hat{K}).
$$
So, we may assume that $K$ is convex, and recall that Kubato's inequality (cf. \cite{Kub} \& \cite{Gri}):
$$
\frac{S(K)}{\sigma_{n-1}}\le\left(\frac{\hbox{diam}(K)}{2}\right)^{n-1}.
$$
Consequently,
$$
\frac{S(K,t)}{\sigma_{n-1}}\le\left(\frac{\hbox{diam}(K)+2t}{2}\right)^{n-1}.
$$
This gives
$$
\ln r-\int_0^r\Big(\frac{S(K,t)}{\sigma_{n-1}}\Big)^\frac1{1-n}\,dt\le\ln\frac{r}{r+\frac{\hbox{diam}(K)}{2}}+\ln\frac{\hbox{diam}(K)}{2}
$$
and so that via (\ref{e37}) one reaches (\ref{e36}).

\end{proof}

\subsection{Conformal capacity to mean-width}\label{s34} Given a nonempty, convex, compact set $K\subseteq\mathbb R^n$. Following \cite[(1.7)]{Schn}, we say that
$$
h_K(x)=\sup_{y\in K}x\cdot y\quad\hbox{for}\quad x\in\mathbb R^n
$$
is the support function of $K$, and
$$
b(K)=\frac{2}{\sigma_{n-1}}\int_{\mathbb S^{n-1}}h_K\,d\theta
$$
is the mean width of $K$ -- here and henceforth $d\theta$ is the uniform surface area measure on $\mathbb S^{n-1}$, i.e., the $n-1$ dimensional spherical Lebesgue measure. The sharp iso-mean-width (or Uryasohn's) inequality 
\begin{equation}
\label{eISOmw}
\left(\frac{V(K)}{\omega_n}\right)^\frac1n\le\frac{b(K)}{2}
\end{equation}
is well known for any compact convex $K\subseteq\mathbb R^n$ (cf. \cite[(6.25)]{Schn}). According to \cite[page 318]{Schn}, we see that if $n=2$ then $b(K)=S(K)/\pi$ and hence (\ref{e39}) has the following replacement which, along with (\ref{e35}), improves (\ref{eISOmw}). 

\begin{theorem}\label{t33} Let $K$ be a compact convex subset of $\mathbb R^n$. Then
\begin{equation}\label{e310}
\hbox{ncap}(K)\le \frac{b(K)}{2}
\end{equation}
with equality if  $K$ is a ball.
\end{theorem}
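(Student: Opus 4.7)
The plan is to mimic the proof of Theorem \ref{t32}, replacing Kubota's surface-to-diameter bound with the classical Urysohn-type surface-to-mean-width bound. Since $\hbox{ncap}(r\mathbb B^n)=r$ by Theorem \ref{t22}(ii) and the support function of $r\mathbb B^n$ restricted to $\mathbb S^{n-1}$ is the constant $r$, so that $b(r\mathbb B^n)=2r$, equality in (\ref{e310}) for balls is immediate; only the inequality needs to be proved.

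First I would reduce to the case that $K$ is a convex body: passing to the convex hull $\hat K$ preserves the support function (hence $b(\hat K)=b(K)$) while enlarging the capacity, by the monotonicity property recorded in Remark \ref{r13}. Then I would apply Gehring's estimate exactly as in the proof of Theorem \ref{t32}, giving
\begin{equation*}
\hbox{ncap}(K)\le\exp\left(\lim_{r\to\infty}\Big(\ln r-\int_0^r\Big(\frac{S(K,t)}{\sigma_{n-1}}\Big)^{\frac{1}{1-n}}\,dt\Big)\right).
\end{equation*}

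The critical new ingredient is the surface-area form of Urysohn's inequality. By the Aleksandrov--Fenchel (equivalently, classical Minkowski) inequalities for the chain of quermassintegrals of a convex body, one has
\begin{equation*}
\left(\frac{S(K)}{\sigma_{n-1}}\right)^{\frac{1}{n-1}}\le\frac{b(K)}{2}.
\end{equation*}
Because the mean width is additive under Minkowski sum, so that $b(K+t\mathbb B^n)=b(K)+2t$, the same inequality applied to the parallel body $K+t\mathbb B^n$ yields
\begin{equation*}
\frac{S(K,t)}{\sigma_{n-1}}\le\Big(\frac{b(K)+2t}{2}\Big)^{n-1}.
\end{equation*}

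Substituting into the Gehring estimate and computing the elementary integral
\begin{equation*}
\ln r-\int_0^r\frac{2\,dt}{b(K)+2t}=\ln\frac{r\,b(K)}{b(K)+2r}\longrightarrow \ln\frac{b(K)}{2}\quad\mbox{as}\quad r\to\infty,
\end{equation*}
then exponentiating, yields (\ref{e310}). The one non-trivial step to justify is the surface-area form of Urysohn's inequality; once that is at hand the remaining argument is a line-by-line transcription of the proof of Theorem \ref{t32} with $\hbox{diam}(K)/2$ replaced throughout by $b(K)/2$.
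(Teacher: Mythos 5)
Your proof is correct, but it takes a genuinely different route from the paper. The paper proves Theorem \ref{t33} by following Borell's rotation-averaging argument: using \eqref{e311} it approximates the ball of radius $b(K)/2$ by finite Minkowski averages $\sum_k \lambda_k T_k K$ of rotated copies of $K$, and then invokes the Colesanti--Cuoghi Brunn--Minkowski inequality \eqref{e41} together with the rotation-invariance of $\hbox{ncap}$ to conclude that this ball has capacity at least $\hbox{ncap}(K)$. Your approach instead recycles the Gehring modulus estimate \eqref{e37} already used for Theorem \ref{t32}, replacing Kubota's inequality $S(K)/\sigma_{n-1}\le(\hbox{diam}(K)/2)^{n-1}$ by the companion quermassintegral inequality $S(K)/\sigma_{n-1}\le(b(K)/2)^{n-1}$ (a standard consequence of the log-concavity $W_j^2\ge W_{j-1}W_{j+1}$ of the Aleksandrov--Fenchel chain, applied with $j$ running from $1$ to $n-1$), applying it to the parallel body $K+t\mathbb B^n$ via the Minkowski-additivity of mean width, and computing the resulting elementary integral. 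Both are valid; the paper's route is conceptually tied to the Brunn--Minkowski theory of capacity and so sits more naturally next to Section \ref{s5}, whereas yours gives a uniform treatment of Theorems \ref{t32} and \ref{t33} from a single external isocapacitary estimate at the price of importing quermassintegral inequalities. One small remark: the reduction to the convex hull $\hat K$ is vacuous here, since the hypothesis of Theorem \ref{t33} already has $K$ convex; your parallel-body estimate works directly for any compact convex $K$ (including degenerate ones, for which $S(K+t\mathbb B^n)$ is still the relevant quantity $S(K,t)$).
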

\begin{proof} Equality of (\ref{e310}) follows from a direct computation with a ball. In general, the argument for (\ref{e310}) is motivated by Borell's proof of the case $n=2$ in \cite[Example 7.4]{Bor}. For $x\in\mathbb R^n$, we have
\begin{equation}\label{e311}
\frac{|x|b(K)}{2}=\frac{1}{\sigma_{n-1}}\int_{\mathbb S^{n-1}}h_K(|x|\theta)\,d\theta.
\end{equation}
The right side of (\ref{e311}) can be approximated by
$\sum_{k=1}^m h_K(|x|\theta_k)\lambda_k$ -- the support function of $\sum_{k=1}^m \lambda_k T_k K$, where
$\lambda_k\in (0,1)$, $\sum_{k=1}^m \lambda_k=1$,
and $T_k K$ is a rotation of $K$ generated by $\theta_k$. Meanwhile, according to Colesanti-Cuoghi's \cite[Theorem 3.1]{ColC} (see also the beginning of Section \ref{s5}), we have
\begin{equation}\label{e312}
\hbox{ncap}\Big(\sum_{k=1}^m \lambda_k T_k K\Big)\ge \sum_{k=1}^m\lambda_k\hbox{ncap}(T_k K)=\hbox{ncap}(K)
\end{equation}
due to the easily-checked rotation-invariance of $\hbox{ncap}(\cdot)$. Note also that the left side of (\ref{e311}) is the support function of a ball of radius $\frac{b(K)}{2}$. So, the above approximation, the correspondence between a support function and a convex set, and (\ref{e312}) yield (\ref{e310}).
\end{proof}

\section{Conformal capacities by mean curvature, surface area and ADM mass}\label{s4a}
\setcounter{equation}{0}

\subsection{Conformal capacity to mean curvature}\label{s4a1} For a convex domain $K\subseteq\mathbb R^n$ with $C^2$ boundary $\partial K$, the $k$-th mean curvature $m_k(K,x)$ at $x\in\partial K$ is defined by 
$$
m_k(K,x)=\begin{cases} & 1\quad\hbox{for}\quad k=0;\\
&{\Big(\begin{array}{c} n-1\\ k\end{array}\Big)^{-1}} {\sum_{1\le i_1<...<i_k\le n-1}\kappa_{i_1}(x)\cdots\kappa_{i_k}(x)}\ \ \hbox{for}\ \ k=1,...,n-1,
\end{cases}
$$
where $\kappa_1(x),...,\kappa_{n-1}(x)$ are the principal curvature of $\partial K$ at the point $x$. Note that (see, e.g. \cite{Spr, FreS})
$$
\begin{cases}
m_1(K,x)=H(K,x)=\hbox{mean\ curvature\ of}\ \partial K\ \hbox{at}\ x;\\
m_2(K,x)=R(K,x)=\hbox{scalar\ curvature\ of}\ \partial K\ \hbox{at}\ x;\\
m_{n-1}(K,x)=G(K,x)=\hbox{Gauss\ curvature\ of}\ \partial K\ \hbox{at}\ x;\\
m_k(K,x)\le \big(H(K,x)\big)^k\quad\hbox{for}\quad k=1,...,n-1;
\end{cases}
$$
Moreover, the $k$-th integral mean curvature of $\partial K$ is given by
$$
M_k(K)=\int_{\partial K}m_k(K,\cdot)\,dS(\cdot).
$$

\begin{theorem}\label{t4a1} If $K\subseteq\mathbb R^n$ is a convex domain with $C^2$ boundary $\partial K$, then
\begin{equation}\label{e38}
\hbox{ncap}(K)\le
\exp\left(\lim_{r\to\infty}\Big(\ln r-\int_0^r \Big(\int_{\partial K}\frac{\big(1+tH(K,\cdot)\big)^{n-1}}{\sigma_{n-1}}\,dS(\cdot)\Big)^\frac1{1-n}\,dt\Big)\right),
\end{equation}
with equality if $K$ is a ball.
\end{theorem}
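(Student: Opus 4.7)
My plan is to deduce Theorem \ref{t4a1} from Gehring's estimate (\ref{e37}) established in the proof of Theorem \ref{t32}, by controlling the parallel surface area $S(K,t)$ in terms of the mean curvature $H(K,\cdot)$. Recall that (\ref{e37}) says
$$
\hbox{ncap}(K)\le\exp\Big(\lim_{r\to\infty}\Big(\ln r-\int_0^r\Big(\frac{S(K,t)}{\sigma_{n-1}}\Big)^{\frac{1}{1-n}}dt\Big)\Big),
$$
where $S(K,t)$ is the $(n-1)$-Hausdorff measure of the set of points in $r\mathbb B^n\setminus K$ at distance exactly $t$ from $K$.

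First I would use that, since $K$ is convex and of class $C^2$, the outward normal map $x\mapsto x+t\nu(x)$ is a global $C^1$ diffeomorphism from $\partial K$ onto the parallel hypersurface $\partial(K+t\mathbb B^n)$ for every $t>0$; for $r>\hbox{diam}(K)+t$ this hypersurface coincides with the level set in the definition of $S(K,t)$. The Minkowski-Steiner tube formula then gives
$$
S(K,t)=\int_{\partial K}\prod_{i=1}^{n-1}\bigl(1+t\kappa_i(x)\bigr)\,dS(x),
$$
with $\kappa_1(x),\dots,\kappa_{n-1}(x)\ge 0$ the principal curvatures of $\partial K$. Applying the pointwise arithmetic-geometric mean inequality to the nonnegative numbers $1+t\kappa_i(x)$ yields
$$
\prod_{i=1}^{n-1}\bigl(1+t\kappa_i(x)\bigr)\le\Big(\frac{1}{n-1}\sum_{i=1}^{n-1}(1+t\kappa_i(x))\Big)^{n-1}=\bigl(1+tH(K,x)\bigr)^{n-1},
$$
and therefore $S(K,t)\le\int_{\partial K}(1+tH(K,\cdot))^{n-1}\,dS$.

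Because $1-n<0$, the map $x\mapsto x^{1/(1-n)}$ is strictly decreasing on $(0,\infty)$, so this pointwise-in-$t$ estimate reverses after raising to the $1/(1-n)$ power and integrating in $t$:
$$
\int_0^r\Big(\frac{S(K,t)}{\sigma_{n-1}}\Big)^{\frac{1}{1-n}}dt\ge\int_0^r\Big(\int_{\partial K}\frac{(1+tH(K,\cdot))^{n-1}}{\sigma_{n-1}}dS\Big)^{\frac{1}{1-n}}dt.
$$
Subtracting from $\ln r$, letting $r\to\infty$ (with the usual convention $\exp(+\infty)=+\infty$ if the limit diverges), and combining with (\ref{e37}) yields (\ref{e38}). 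For equality when $K=R\overline{\mathbb B^n}$, one has $H\equiv R^{-1}$, hence $\int_{\partial K}(1+tH)^{n-1}/\sigma_{n-1}\,dS=(R+t)^{n-1}$, and $\ln r-\int_0^r(R+t)^{-1}dt\to\ln R$, so the right-hand side of (\ref{e38}) equals $R=\hbox{ncap}(R\overline{\mathbb B^n})$ by Theorem \ref{t22}(ii).

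The main obstacle I foresee is the clean justification of the Steiner tube formula in the stated form: one must invoke convexity to rule out cut-locus effects, to ensure $1+t\kappa_i>0$ for all $t>0$, and to identify the Jacobian of the outward normal map with $\prod_{i=1}^{n-1}(1+t\kappa_i)$. These are standard differential-geometric facts for convex $C^2$ bodies, after which the AM-GM step and the monotonicity argument render the rest routine.
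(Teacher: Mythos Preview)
Your proof is correct and follows essentially the same route as the paper: both start from Gehring's estimate (\ref{e37}), invoke the Steiner tube formula for $S(K,t)$, and then bound the parallel surface area by $\int_{\partial K}(1+tH)^{n-1}\,dS$. The only difference is cosmetic: the paper expands $S(K,t)=\sum_{k}\binom{n-1}{k}M_k(K)t^k$ and applies the Maclaurin-type inequality $m_k\le H^k$ termwise before resumming via the binomial theorem, whereas you apply AM--GM directly to the product $\prod_i(1+t\kappa_i)$ --- a slightly cleaner shortcut to the identical pointwise bound.
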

\begin{proof} A straightforward calculation with $r\mathbb B_n$ yields equality of (\ref{e38}). Now, according to \cite[(13.43)]{San} we have
$$
S(K,t)=\sum_{k=0}^{n-1}\Big(\begin{array}{c} n-1\\ k\end{array}\Big)M_{k}(K)t^k.
$$
This, along with (\ref{e37}) and the binomial formula, deduces 

\begin{eqnarray*}\label{e388}
&&\hbox{ncap}(K)\\
&&\le\exp\left(\lim_{r\to\infty}\Big(\ln r-\int_0^r \Big(\sum_{k=0}^{n-1}\Big(\begin{array}{c} n-1\\ k\end{array}\Big)\Big(\frac{M_{k}(K)}{\sigma_{n-1}}\Big) t^k\Big)^\frac1{1-n}\,dt\Big)\right)\\
&&\le\exp\left(\lim_{r\to\infty}\Big(\ln r-\int_0^r \Big(\sum_{k=0}^{n-1}\Big(\begin{array}{c} n-1\\ k\end{array}\Big)
t^k\int_{\partial K}\Big(\frac{m_k(K,\cdot)}{\sigma_{n-1}}\Big)\,dS(\cdot)\Big)^\frac1{1-n}\,dt\Big)\right)\\
&&\le\exp\left(\lim_{r\to\infty}\Big(\ln r-\int_0^r \Big(\sum_{k=0}^{n-1}\Big(\begin{array}{c} n-1\\ k\end{array}\Big)
t^k\int_{\partial K}\Big(\frac{\big(H(K,\cdot)\big)^k}{\sigma_{n-1}}\Big)\,dS(\cdot)\Big)^\frac1{1-n}\,dt\Big)\right)\\
&&=\exp\left(\lim_{r\to\infty}\Big(\ln r-\int_0^r \Big(\int_{\partial K}\frac{\big(1+tH(K,\cdot)\big)^{n-1}}{\sigma_{n-1}}\,dS(\cdot)\Big)^\frac1{1-n}\,dt\Big)\right),
\end{eqnarray*}
as desired.
\end{proof}

\begin{remark}
\label{r4a1} Here, it is perhaps appropriate to mention that if $n=2$ then the Gauss-Bonnet formula gives $M_1(K)=2\pi$. Now, (\ref{e37}), together with $S(K,t)=S(K)+tM_1(K)$, derives the P\'olya-Szeg\"o inequality (see also
\cite[Aufg. 124]{PolS}, \cite[page 13]{Ge2} and \cite{Bor}):

\begin{equation}\label{e39}
\hbox{ncap}(K)\le \Big(\frac{S(K)}{\sigma_{n-1}}\Big)^\frac1{n-1}\quad\hbox{for}\quad n=2.
\end{equation}
Obviously, (\ref{e39}) and (\ref{e35}) indicate that (\ref{e32}) or the left inequality of (\ref{e33}) under $n=2$ may be further improved. It is our conjecture that (\ref{e39}) is still true for $n\ge 3$ and $K$ being convex.
\end{remark}

\subsection{Surface area to conformal capacity to graphic ADM mass}\label{s4a2}

For a smooth function $f(x)=f(x_1,...,x_n)$ and $i,j,k=1,2,...,n$  we follow \cite{Lam} to write
$$
\begin{cases}
f_i=\frac{\partial f}{\partial x_i};\\
f_{ij}=\frac{\partial^2 f}{\partial x_i\partial x_j};\\
f_{ijk}=\frac{\partial^3 f}{\partial x_i\partial x_j\partial x_k};\\
\delta_{ij}=0\ \hbox{or}\ 1 \ \hbox{as}\ i\not=j\ \hbox{or}\ i=j.
\end{cases}
$$
And, for a bounded open set $O\subseteq\mathbb R^n$ with $n\ge 3$ and boundary $\partial O$, we say that a smooth function $f:\mathbb R^n\setminus O\mapsto \mathbb R^1$ is asymptotically flat if 
$$
|f_i(x)|+|x||f_{ij}(x)|+|x|^2|f_{ijk}(x)|=\mathcal{O}(|x|^{-\gamma/2})
\quad\hbox{for}\quad |x|\to\infty.
$$
holds for a constant $\gamma>n/2-1$. Then, given such a smooth asymptotically flat function $f$, let 
$$
\big(\mathbb R^n\setminus O, \delta+df\otimes df\big)=\big(\mathbb R^n\setminus O, (\delta_{ij}+f_if_j)\big)
$$ 
be the graph of $f$, which is actually a complete Riemannian manifold. Now, the Arnowitt-Deser-Misner (ADM) mass of such a graph is defined by
$$
m_{ADM}(\mathbb R^n\setminus O,\delta+df\otimes df)=\lim_{r\to\infty}\int_{S_r}\sum_{i,j=1}^n\frac{(f_{ii}f_j-f_{ij}f_i)x_j|x|^{-1}}{2(n-1)\sigma_{n-1}(1+|\nabla f|^2)}\,d\sigma,
$$
where $S_r$ is the coordinate sphere of radius $r$ and $d\sigma$ is the area element of $S_r$. Here, it is worth mentioning to point out the above-defined ADM mass is the same as the original ADM mass of an asymptotically flat manifold; see Schoen-Yau \cite{SY1,SY2} and Witten \cite{Wi} for the Riemannian positive mass theorem, as well as Huisken-Illmanen \cite{HI1} and Bray \cite{Br} for the Riemannian Penrose inequlaity for area outer minimizing horizon. 

\begin{theorem}\label{t4a2} Given a convex compact set $K\subseteq\mathbb R^n$, $n\ge 3$ and a positive constant $c$, let $O\subseteq\mathbb R^n$ and $u$ be the convex domain containing $K$ and the $n$-capacity potential that solve the exterior Bernoulli problem below
\begin{equation}
\label{eee}
\begin{cases}
-\hbox{div}(|\nabla u|^{n-2}\nabla u)=0\quad\hbox{in}\quad O\setminus K;\\
u=1\quad\hbox{on}\quad \partial K;\\
u=0\quad\hbox{on}\quad \partial O;\\
|\nabla u|=c\quad\hbox{on}\quad \partial O.\\
\end{cases}
\end{equation}
Suppose $f: \mathbb R^n\setminus O\mapsto \mathbb R^1$ is a smooth asymptotically flat function such that $f(\partial O)$ is in a level set of $f$, $\lim_{x\to\partial O}|\nabla f(x)|=\infty$, and the scalar curvature of $\big(\mathbb R^n\setminus O, \delta+df\otimes df\big)$ is non-negative. Then
\begin{equation}
\label{eADMc}
 m_{ADM}(\mathbb R^n\setminus O,\delta+df\otimes df)\ge
 \frac{\big(\frac{S(O)}{\sigma_{n-1}}\big)^\frac{n-2}{n-1}-\big(\frac{S(K)}{\sigma_{n-1}}\big)^\frac{n-2}{n-1}}
{2(n-2)\Big(\frac{\hbox{ncap}(K,O)}{\sigma_{n-1}}\Big)^\frac1{1-n}}.
\end{equation}
\end{theorem}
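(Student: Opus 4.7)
The plan is to combine a Lam-type integral representation of the ADM mass with a level-set inequality driven by the $n$-capacity potential $u$ of the condenser $(O,K)$.

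First, I invoke the graph identity
\begin{equation*}
m_{ADM}(\mathbb R^n\setminus O,\delta+df\otimes df)=\frac{1}{2(n-1)\sigma_{n-1}}\left[\int_{\mathbb R^n\setminus O}R_g\sqrt{1+|\nabla f|^2}\,dV+\int_{\partial O}H\,dS\right],
\end{equation*}
in which $R_g$ is the scalar curvature of the graph and $H$ the Euclidean mean curvature of $\partial O$. This comes from writing $R_g\sqrt{1+|\nabla f|^2}$ in Lam's pointwise divergence form and integrating between $\partial O$ and a large coordinate sphere $S_r$: asymptotic flatness identifies the outer flux with $2(n-1)\sigma_{n-1}m_{ADM}$ in the $r\to\infty$ limit, while $f|_{\partial O}\equiv$ const together with the blow-up $|\nabla f(x)|\to\infty$ as $x\to\partial O$ collapses the inner flux to the Euclidean mean-curvature integral. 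The assumed non-negativity of $R_g$ then gives
\begin{equation*}
m_{ADM}\ge\frac{1}{2(n-1)\sigma_{n-1}}\int_{\partial O}H\,dS.
\end{equation*}

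Using $\hbox{nmod}(O,K)=(\sigma_{n-1}/\hbox{ncap}(O,K))^{1/(n-1)}$ from (\ref{e14}), the desired (\ref{eADMc}) reduces to the purely geometric inequality
\begin{equation*}
\int_{\partial O}H\,dS\ge\frac{n-1}{n-2}\,\hbox{ncap}(O,K)^{\frac{1}{n-1}}\left[S(O)^{\frac{n-2}{n-1}}-S(K)^{\frac{n-2}{n-1}}\right].
\end{equation*}
To prove it, I parametrize the convex ring $O\setminus K$ by the level sets $\Sigma_t=\{u=t\}$, each a smooth convex hypersurface by the Lewis theorem cited in Remark \ref{r31}. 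Identity (\ref{e13}) gives $\int_{\Sigma_t}|\nabla u|^{n-1}\,dS=\hbox{ncap}(O,K)$ for every $t\in[0,1]$, and combining the first-variation formula $dS(\Sigma_t)/dt=-\int_{\Sigma_t}H_t/|\nabla u|\,dS$ with the chain rule for $S(\Sigma_t)^{(n-2)/(n-1)}$ and integrating from $t=0$ to $t=1$ yields
\begin{equation*}
S(O)^{\frac{n-2}{n-1}}-S(K)^{\frac{n-2}{n-1}}=\frac{n-2}{n-1}\int_0^1\phi(t)\,dt,\quad \phi(t):=S(\Sigma_t)^{-\frac{1}{n-1}}\int_{\Sigma_t}\frac{H_t}{|\nabla u|}\,dS,
\end{equation*}
where $H_t$ denotes the mean curvature of $\Sigma_t$ with respect to the outward normal $-\nabla u/|\nabla u|$. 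The Bernoulli condition $|\nabla u|_{\partial O}\equiv c$ together with $\hbox{ncap}(O,K)=c^{n-1}S(O)$ computes $\phi(0)=c^{-1}S(O)^{-1/(n-1)}\int_{\partial O}H\,dS$, so the bound $\int_0^1\phi(t)\,dt\le\phi(0)$ is exactly the reduced inequality displayed above.

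The main obstacle is thus to establish the monotonicity $\int_0^1\phi(t)\,dt\le\phi(0)$, which is equivalent to the convexity on $[0,1]$ of $\Phi(t):=S(\Sigma_t)^{(n-2)/(n-1)}$. In the concentric-ball case $K=r_1\mathbb B^n\subset O=r_2\mathbb B^n$ a direct computation gives $\Phi(t)=\sigma_{n-1}^{(n-2)/(n-1)}r_2^{n-2}(r_1/r_2)^{(n-2)t}$, which is exponential in $t$, so the bound saturates and one recovers the expected rigidity. For a general convex condenser I would establish the convexity of $\Phi$ by differentiating it twice along the level-set flow, converting $\Delta u$ via the $n$-harmonic equation $\Delta u=-(n-2)|\nabla u|^{-1}\nabla u\cdot\nabla|\nabla u|$, and then applying a Bochner-type identity for the $n$-harmonic vector field $|\nabla u|^{n-2}\nabla u$ together with Newton's inequality on the principal curvatures of the convex $\Sigma_t$ and the Minkowski-type bound $\int_{\Sigma_t}H_t\,dS\ge(n-1)\sigma_{n-1}^{1/(n-1)}S(\Sigma_t)^{(n-2)/(n-1)}$. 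This would follow the spirit of the monotonicity formulas for $p$-capacitary potentials developed by Agostiniani-Mazzieri and Fogagnolo-Mazzieri-Pinamonti, here specialized to the conformally invariant exponent $p=n$.
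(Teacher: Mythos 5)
Your overall strategy agrees with the paper's in its first half: both invoke Lam's graph identity to write $m_{ADM}$ as the sum of $\int_{\partial O}H\,dS$ (up to dimensional constants) and a scalar-curvature volume term, then discard the latter using $R_f\ge 0$, reducing the theorem to a lower bound for $\int_{\partial O}H\,dS$ in terms of the capacity and surface areas, with the Bernoulli condition $|\nabla u|\equiv c$ on $\partial O$ and $c^{n-1}S(O)=\hbox{ncap}(O,K)$ supplying the conversion. (Note, though, that your constants disagree with the paper's use of Lam's formula: the paper attaches $\tfrac{1}{2\sigma_{n-1}}$ to the $H$-integral, not $\tfrac{1}{2(n-1)\sigma_{n-1}}$. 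Whether one normalizes $H$ as the sum or the average of principal curvatures changes the $n-1$ factor, and you must match this against the convention used in the mean-curvature inequality you invoke afterward.)

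Where you diverge from the paper is the second half. The paper simply cites \cite[(4.9)\,\&\,(4.28)]{PhiP} to get the inequality
$\big(S(K)\big)^{\frac{n-2}{n-1}}-\big(S(O)\big)^{\frac{n-2}{n-1}}\ge(2-n)\big(S(O)\big)^{\frac1{n-1}}\int_{\partial O}H(O,\cdot)|\nabla u|^{-1}\,dS$
as a known result, whereas you try to reprove it from scratch via the level-set parametrization of the $n$-harmonic ring. Your setup is on the right track: $\Phi(t)=S(\Sigma_t)^{(n-2)/(n-1)}$ does reduce the desired inequality to $\int_0^1\phi(t)\,dt\le\phi(0)$. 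But two issues remain. First, the asserted ``equivalence'' of $\int_0^1\phi\le\phi(0)$ with the convexity of $\Phi$ is not quite correct; convexity (equivalently, $\phi$ nonincreasing) is a \emph{sufficient} condition for the integral bound, not an equivalent one. Second, and more seriously, the key fact that $\Phi$ is convex --- which is precisely Philippin--Payne's Theorem~4, $S''(t)S(t)-(n-1)^{-1}(S'(t))^2\ge 0$, quoted in Remark~\ref{r4ab}(ii) of the paper --- is left as an unproved outline in your argument (``I would establish the convexity of $\Phi$ by differentiating it twice along the level-set flow \dots''). That differential inequality is the entire substance of the step you are replacing the Philippin--Payne citation with, and the sketch you give (Bochner identity, Newton's inequalities, a Minkowski-type bound) is plausible in spirit but not a proof. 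In its present form your argument therefore has a genuine gap exactly where the paper delegates to \cite{PhiP}; to close it you must either actually carry out the computation establishing $\Phi''\ge 0$ for $n$-harmonic level sets in a convex ring or, as the paper does, cite the result directly.
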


\begin{proof} First of all, it should be pointed out that (\ref{eee}) is solvable; see also \cite{HS}.

Next, Lam's \cite[Theorem 6]{Lam} actually says  
\begin{equation}
\label{eLam}
m_{ADM}(\mathbb R^n\setminus O,\delta+df\otimes df)
=\int_{\partial O}\frac{H(O,\cdot)}{2\sigma_{n-1}}\,dS(\cdot)+\int_{\mathbb R^n\setminus O}\frac{R_f(\cdot)}{2(n-1)\sigma_{n-1}}\,dV(\cdot),
\end{equation}
where 
$$
R_f=\sum_{j=1}^n \frac{\partial}{\partial x_j}\sum_{i=1}^n\left(\frac{f_{ii}f_j-f_{ij}f_i}{1+|\nabla f|^2}\right)
$$
is the scalar curvature of the graph $(\mathbb R^n\setminus O, \delta+df\otimes df)$ of $f$; see also \cite[Lemma 10]{Lam} or \cite[Proposition 5.4]{HuangW}. Thus, (\ref{eLam}), along with $R_f\ge 0$ implies 
\begin{equation}
\label{eL}
({2\sigma_{n-1}})^{-1}\int_{\partial O}H(O,\cdot)\,dS(\cdot)
\le m_{ADM}(\mathbb R^n\setminus O,\delta+df\otimes df).
\end{equation}

Finally, in accordance with \cite[(4.9)\&(4.28)]{PhiP} we have
\begin{equation}
\label{eLL}
\big(S(K)\big)^\frac{n-2}{n-1}-\big(S(O)\big)^\frac{n-2}{n-1}\ge(2-n)\big(S(O)\big)^{\frac1{n-1}}\int_{\partial O}H(O,\cdot)|\nabla u|^{-1}\,dS(\cdot).
\end{equation}
Note that $|\nabla u|$ equals a constant $c>0$ on $\partial O$. So, an application of (\ref{e13}) with $t=0$ gives
$$
c^{n-1}S(O)=\hbox{ncap}(O,K).
$$
This, together with (\ref{eLL})-(\ref{eL}), implies (\ref{eADMc}) right away.
\end{proof}

\begin{remark}\label{r4ab} Two comments are in order.

\item{\rm(i)} Under the same hypothesis as in Theorem \ref{t4a2}, we find that the right-hand-side of \cite[(4.28)]{PhiP} is non-negative, and then utilize \cite[(4.8)]{PhiP} and  \cite[Lemma 12]{Lam} to derive 
$$
\sigma_{n-1}\Big(\frac{S(O)}{\sigma_{n-1}}\Big)^\frac{n-2}{n-1}\le\int_{\partial O}H(O,\cdot)\,dS(\cdot)\le \Big(\frac{S(O)}{n-2}\Big)\left(\frac{\hbox{ncap}(K,O)}{S(O)}\right)^\frac1{n-1}.
$$
and thus
$$
n-2\le\Big(\frac{\hbox{ncap}(K,O)}{\sigma_{n-1}}\Big)^\frac1{n-1}.
$$
This and (\ref{eADMc}) imply
$$
m_{ADM}(\mathbb R^n\setminus O,\delta+df\otimes df)\ge
 2^{-1}\left({\Big(\frac{S(O)}{\sigma_{n-1}}\Big)^\frac{n-2}{n-1}-\Big(\frac{S(K)}{\sigma_{n-1}}\Big)^\frac{n-2}{n-1}}\right).
$$ 
Upon $K$ shrinking to a point, the last inequality recovers the following Riemannian Penrose type inequality (established in \cite[Remark 8]{Lam}):
 \begin{equation*}\label{eADMs}
2^{-1}\left(\frac{S(O)}{\sigma_{n-1}}\right)^\frac{n-2}{n-1}\le
m_{ADM}(\mathbb R^n\setminus O,\delta+df\otimes df).
\end{equation*}

\item{\rm(ii)} Moreover, if $S(t)$ and $V(t)$ stand for the surface area of the level surface $\Gamma_t=\{x\in \overline{O\setminus K}:\ u(x)=t\}$ and the volume of the domain bounded by $\Gamma_t$, then an application of the co-area formula, (\ref{e13}) and the H\"older inequality yields
$$
S(t)\le\big(\hbox{ncap}(K,O)\big)^\frac1n\big(-V'(t)\big)^\frac{n-1}{n}\quad\forall\quad t\in [0,1].
$$
According to \cite[Theorem 4]{PhiP}, we have
$$
S''(t)S(t)-(n-1)^{-1}\big(S'(t)\big)^2\ge 0\quad\forall\quad t\in [0,1],
$$
thereby getting
$$
\big(S(t))^{\frac1{1-n}}S'(t)\ge S'(0)\big(S(0)\big)^\frac1{1-n}.
$$
Using $|\nabla u|\big|_{\partial O}=c$ and \cite[4.8]{PhiP} again, we find
$$
S'(0)\big(S(0)\big)^\frac1{1-n}=-(n-1)\big(\hbox{ncap}(K,O)\big)^\frac1{1-n}\int_{\partial O}H(O,\cdot)\,dS(\cdot)
$$
whence achieving the monotonicity involving volume, surface area and mean curvature below:
$$
\frac{d}{dt}\left(\big(S(t)\big)^2-2(n-1)V(t)\int_{\partial O}H(O,\cdot)\,dS(\cdot) \right)\ge 0\quad\forall\quad t\in [0,1],
$$
where the special case $n=2$ goes back to Longinetti's isoperimetric deficit monotonicity in \cite[(5.12)]{Log}.
\end{remark}

\section{Boundary estimates for gradients of $n$-equilibriums}\label{s4}
\setcounter{equation}{0}

\subsection{An identity for the unit sphere area via $n$-equilibrium}\label{s41} By a convex body in $\mathbb R^n$ we mean a convex and compact subset of $\mathbb R^n$ with non-empty interior. For convenience, denote by $\mathbb K^n$ the set of all convex bodies. For $K\in\mathbb K^n$, the Gauss map $g: \partial K\to \mathbb S^{n-1}$ is defined almost everywhere with respect to surface measure $dS$ and determined by $g(x)=\nu$, the outer unit normal at $x\in \partial K$.
In the process of finding a representation of the conformal capacity $\hbox{ncap}(K)$ in terms of the integral of $|\nabla u_K|^n$ of $n$-equilibrium $u_K$ on $\partial K$, we get the following result whose case $n=2$ is essentially known; see also \cite{Je96b}.

\begin{theorem}\label{p-41} If $K\in\mathbb K^n$, then
\begin{equation}\label{e42a}
\int_{\partial K}h_{K}(g)|\nabla u_{K}|^n \,dS=\sigma_{n-1}.
\end{equation}
In other words, if $g_\ast(|\nabla u_{K}|^n \,dS)$ is defined by
$$
\int_{g^{-1}(E)}|\nabla u_{K}|^n \,dS\quad\forall\quad \hbox{Borel\ set}\ \ E\subseteq\mathbb S^{n-1},
$$
then
$$
\int_{\mathbb S^{n-1}}h_K g_\ast(|\nabla u_{K}|^n \,dS)=\sigma_{n-1}.
$$
Consequently,
\begin{equation}\label{e42b1}
\int_{\mathbb S^{n-1}}\xi g_\ast(|\nabla u_{K}|^n \,dS)(\xi)=0.
\end{equation}

\end{theorem}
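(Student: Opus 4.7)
My plan is to use a Pohozaev-type divergence identity. Set $u=u_K$ and define on $\mathbb{R}^n\setminus K$ the vector field
$$
X=(x\cdot\nabla u)|\nabla u|^{n-2}\nabla u-\frac{|\nabla u|^n}{n}x.
$$
Using $\nabla(x\cdot\nabla u)=\nabla u+D^2u\cdot x$ and $\nabla(|\nabla u|^n)=n|\nabla u|^{n-2}D^2u\cdot\nabla u$, a short calculation gives
$$
\mathrm{div}(X)=|\nabla u|^n+|\nabla u|^{n-2}\nabla u\cdot(D^2u\cdot x)+(x\cdot\nabla u)\,\mathrm{div}\bigl(|\nabla u|^{n-2}\nabla u\bigr)-|\nabla u|^{n-2}\nabla u\cdot(D^2u\cdot x)-|\nabla u|^n,
$$
which vanishes since $u$ solves the $n$-Laplace equation of \eqref{e16}. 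Hence $X$ is divergence-free in $\mathbb{R}^n\setminus K$.

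I then apply the divergence theorem on the annular region $r\mathbb{B}^n\setminus K$ for $r$ large, with outer unit normal $g(x)$ on $\partial K$ pointing \emph{into} $r\mathbb{B}^n\setminus K$ (i.e.\ $-g(x)$ in the usual convex-body convention) and $x/|x|$ on $\partial(r\mathbb{B}^n)$. On $\partial K$ the Dirichlet condition $u=0$ forces $\nabla u=|\nabla u|\,g(x)$ (non-tangential limit in the sense of \cite{LewN,Lew}), and, since $K$ is a convex body, $x\cdot g(x)=h_K(g(x))$ for a.e.\ $x\in\partial K$. Thus
$$
X\cdot g=|\nabla u|^n\,h_K(g)-\frac{|\nabla u|^n}{n}\,h_K(g)=\frac{n-1}{n}\,h_K(g)\,|\nabla u|^n,
$$
so the inner-boundary contribution equals $\tfrac{n-1}{n}\!\int_{\partial K}\!h_K(g)|\nabla u|^n\,dS$. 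For the outer sphere, the asymptotics \eqref{e22} from \cite{KicV} (together with the strengthening $\nabla u(x)=x/|x|^2+o(|x|^{-1})$ consistent with the gradient bound in \eqref{e22}) yield $x\cdot\nabla u\to 1$ and $\nabla u\cdot x/|x|\sim|x|^{-1}$, whence on $\partial(r\mathbb{B}^n)$
$$
X\cdot\frac{x}{|x|}=\frac{n-1}{n}\,r^{1-n}+o(r^{1-n}),
$$
so $\int_{\partial(r\mathbb{B}^n)}X\cdot(x/|x|)\,dS\to\tfrac{n-1}{n}\sigma_{n-1}$ as $r\to\infty$. Equating the two boundary contributions produces \eqref{e42a}.

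For \eqref{e42b1}, the cleanest route is translation invariance: translating $K$ by any $\xi_0\in\mathbb{R}^n$ sends $u_K$ to $u_{K+\xi_0}(x)=u_K(x-\xi_0)$, preserves the Gauss image measure $g_{\ast}(|\nabla u_K|^n\,dS)$, and shifts the support function by $h_{K+\xi_0}(\nu)=h_K(\nu)+\xi_0\cdot\nu$. Applying \eqref{e42a} to both $K$ and $K+\xi_0$ and subtracting gives $\int_{\partial K}(\xi_0\cdot g)|\nabla u_K|^n\,dS=0$ for every $\xi_0$, which is exactly \eqref{e42b1}. (Alternatively one may run the same Pohozaev argument with the divergence-free field $Y^{(i)}_j=|\nabla u|^{n-2}u_iu_j-\delta_{ij}|\nabla u|^n/n$; its boundary trace on $\partial K$ is $\tfrac{n-1}{n}g_i|\nabla u|^n$, while its flux through $\partial(r\mathbb{B}^n)$ vanishes as $r\to\infty$ by spherical symmetry of the leading term $x_i|x|^{-(n+1)}$.)

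The main obstacle is rigor at $\partial K$: since $K$ is merely a convex body, $\partial K$ need not be $C^2$ and $u_K$ is only $C^{1,\alpha}$ up to the boundary, so the computation of $\mathrm{div}(X)$ and the identification $\nabla u=|\nabla u|g$ on $\partial K$ should be carried out with the non-tangential boundary values supplied by Lewis--Nystr\"om \cite{LewN,Lew}, applied in a one-sided neighborhood of $\partial K$ and then passed to the limit. The asymptotic decay estimates from \cite{KicV} must also be strong enough (at the level of $\nabla u$, not just $u$) to justify the error-term bookkeeping at $\partial(r\mathbb{B}^n)$; in particular, for the vanishing of the sphere flux in the second identity one needs that the leading radial term $x_i|x|^{-(n+1)}$ carries zero integral and that the $o(r^{-n})$ correction, after multiplication by the surface area $\sigma_{n-1}r^{n-1}$, tends to $0$.
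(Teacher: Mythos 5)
Your proof is correct and takes essentially the same route as the paper: the paper uses the Pohozaev-type field $X=n(x\cdot\nabla u)|\nabla u|^{n-2}\nabla u-|\nabla u|^n x$ (which is $n$ times yours), verifies $\operatorname{div}X=0$ in $\mathbb R^n\setminus K$, integrates by parts over $r\mathbb B^n\setminus K$, evaluates the outer flux via the Kichenassamy--Veron asymptotics to get the dimensional constant, and derives \eqref{e42b1} by exactly your translation-invariance argument applied to $L=K+x_0$. The one place you only sketch is the passage from $C^2$ strictly convex $K$ to general $K\in\mathbb K^n$: the paper handles this by approximating $K$ from outside with the level sets $K_t=\{u_K\le t\}$, which Colesanti--Cuoghi show are $C^2$ strictly convex, observing that $u_K-t$ is the $n$-equilibrium of $K_t$ and $d_H(K_t,K)\to0$, and then invoking the $L^n(\partial K)$-integrability of the non-tangential gradient from Lewis--Nystr\"om together with dominated convergence to pass $t\to0$ in the identity $\sigma_{n-1}=\int_{\partial K_t}(x\cdot\nabla u_K)|\nabla u_K|^{n-1}\,dS$.
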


\begin{proof} For $K\in\mathbb K^n$, write $u=u_{K}$. Suppose $\nu$ is the outer unit normal. Two cases are in order.

{\it Case 1.}\ $K$ is of $C^2$ strictly convex. Then
\begin{equation}\label{e44}
|\nabla u|=-\frac{\partial u}{\partial\nu}\quad\hbox{on}\quad \partial K;
\end{equation}
see also \cite{Sak}.

Recall that if
$$
X=n(x\cdot\nabla u)|\nabla u|^{n-2}\nabla u-|\nabla u|^n x
$$
then $\hbox{div}X=0$ in $\mathbb R^n\setminus K$ and hence by an integration-by-part,
$$
\int_{\partial K}X\cdot\nu\,dS=\int_{\partial (r\mathbb B^n)}X\cdot\nu\,dS\quad\hbox{as}\quad r\to\infty.
$$
However, the right side of the last formula tends to $\sigma_{n-1}$ as $r\to\infty$ thanks to the expansion of $u$ at infinity. So, from (\ref{e44}) it follows that
$$
(n-1)\int_{\partial K}(x\cdot\nabla u)\Big(-\frac{\partial u}{\partial\nu}\Big)^{n-1}\,dS=\Big(\frac{1-n}{n}\Big)\int_{\partial K}X\cdot\nu\,dS=(n-1)\sigma_{n-1}.
$$
Consequently, (\ref{e42a}) follows from
$$
\int_{\partial K}h_{K}(g)|\nabla u|^n\,dS=\int_{\partial K}(x\cdot\nabla u)\Big(-\frac{\partial u}{\partial\nu}\Big)^{n-1}\,dS=\sigma_{n-1}.
$$

To reach (\ref{e42b1}), note that $\sigma_{n-1}$ is a dimensional constant and the support function of $L=K+x_0$ is
$$
h_L(\xi)=h_K(\xi)+x_0\cdot\xi\quad\hbox{for}\quad\xi\in\mathbb S^{n-1},
$$
where $x_0\in\mathbb R^n$ is arbitrarily given. So, an application of (\ref{e42a}) to $L$ yields
$$
\int_{\partial K}x_0\cdot g(x)|\nabla u_K(x)|^n\,dS(x)=0
$$
and consequently, the following vector equation
$$
\int_{\partial K}g(x)|\nabla u_K(x)|^n\,dS(x)=0
$$
holds. This gives (\ref{e42b1}).

{\it Case 2}.\ $K$ just belongs to $\mathbb K^n$. To prove (\ref{e43b}) under this general situation, recall first that the Hausdorff metric $d_H$ on the class $\mathcal C^n$ of all compact convex subsets of $\mathbb R^n$ is determined by
$$
d_H(K_1,K_2)=\sup_{x\in K_1}d(x,K_1)+\sup_{x\in K_2}d(x,K_2)\quad\forall\quad K_1,K_2\in \mathcal C^n,
$$
where $d(x,E)$ stands for the distance from the point $x$ to the set $E$.

Of course, the interior of the above $K$ is a Lipschitz domain. According to Lewis-Nystr{\"o}m's
\cite[Theorem 3]{LewN} (cf. \cite{Dah} and \cite{JeK} for harmonic functions), we see that $\nabla u_K$ has non-tangential limit, still denoted by $\nabla u_K$, almost everywhere on $\partial K$ with respect to $dS$. Moreover, $|\nabla u_K|$ is $n$-integrable on $\partial K$ under $dS$, i.e.,
\begin{equation}\label{e43c}
\int_{\partial K}|\nabla u_K|^n\,dS<\infty.
\end{equation}
For $0<t<1$ let
$$
L_t=\{x\in \mathbb R^n\setminus K:\ u_K(x)>t\}\quad\&\quad K_t=\mathbb R^n\setminus L_t.
$$
Then $K_t$ is $C^2$ strictly convex (cf. \cite[Theorem 2.2]{ColC}). Note that $u_K-t$ is equal to the $n$-equilibrium potential $u_{K_t}$ of $K_t$, and note that continuity of $u_K$ on $\partial K$ yields $\lim_{t\to 0}d_H(K_t,K)=0$. So,
$$
\sigma_{n-1}=\int_{\partial K_t}(x\cdot\nabla u_K)|\nabla u_K|^{n-1}\,dS(x).
$$
This, plus (\ref{e43c}) and the dominated convergence theorem, derives
$$
\sigma_{n-1}=\lim_{t\to 0}\int_{\partial K_t}(x\cdot\nabla u_K)|\nabla u_K|^{n-1}\,dS(x)=\int_{\partial K}(x\cdot\nabla u_K)|\nabla u_K|^{n-1}\,dS(x),
$$
whence yielding (\ref{e43a}) and its consequence (\ref{e42b1}).
\end{proof}

\begin{remark}\label{rrr} Given $K\in\mathbb K^n$. If $U$ is $n$-harmonic, i.e., $\hbox{div}(|\nabla U|^{n-2}\nabla U)=0$, in $\mathbb R^n\setminus K$, $U$ is continuous on $\partial K$, and $U(x)$ has a finite limit $U(\infty)$ as $x\to \infty$, then the well-known divergence theorem is used to produce
$$
U(\infty)=\frac1{\sigma_{n-1}}\left(\int_{\partial K}U|\nabla u_K|^{n-1}\,dS+\int_{\mathbb R^n\setminus K}\frac{\nabla u_K\cdot\nabla U}{\big(|\nabla u_K|^{n-2}-|\nabla U|^{n-2}\big)^{-1}}\,dV\right).
$$
In particular, if $n=2$ then this formula reduces to \cite[(6.3)]{Je96b}, and consequently, if $U(x)=u_K(x)-\ln|x|$ (which is $2=n$-harmonic in $\mathbb R^n\setminus K$) then
\begin{equation*}\label{e42b11}
\hbox{ncap}(K)=\exp\left(\frac{1}{\sigma_{n-1}}\int_{\partial K}(\ln|x|)|\nabla u_K|^{n-1}\,dS\right)\ \ \hbox{for}\ \ n=2.
\end{equation*}
It is our conjecture that this last formula is still valid for $n\ge 3$.
\end{remark}

 \subsection{A lower bound for the gradient of $n$-equilibrium}\label{s42}  Being motivated by \cite[Lemma 2.18]{CLNSXYZ} we find the following lower bound estimate for the gradient of the equilibrium of (\ref{e16}) on the boundary of a convex body.

\begin{theorem}\label{l5c} Given $K\in\mathbb K^n$, let $u_K$ be its equilibrium potential. If $K\subseteq r\mathbb B^n$, then there exists a constant $c>0$ depending only on $r$ and $n$ such that $\inf_{\partial K}|\nabla u_K|\ge c$ almost everywhere on $\partial K$ with respect to $dS$.
\end{theorem}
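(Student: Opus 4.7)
The plan is to derive a Hopf-type lower bound adapted to the $n$-Laplacian: convexity of $K$ supplies an exterior ball of radius $3r$ at every regular point of $\partial K$, while comparison with the $n$-equilibrium of the circumscribed ball $r\mathbb{B}^n$ yields a quantitative lower bound for $u_K$ at the centre of that exterior ball. The boundary gradient estimate then follows by comparing $u_K$ with an explicit radial barrier on an annulus.

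The first step would be to establish the pointwise bound
\begin{equation*}
u_K(x)\ge \ln(|x|/r)\qquad\text{for every }|x|\ge r.
\end{equation*}
Indeed, Theorem \ref{t21} combined with Theorem \ref{t22}(ii) and the monotonicity of $\hbox{ncap}(\cdot)$ (Remark \ref{r13}) gives $\alpha_K:=\lim_{|x|\to\infty}\bigl(u_K(x)-\ln|x|\bigr)=-\ln\hbox{ncap}(K)\ge -\ln r$. Writing $v(x)=\ln(|x|/r)$ for the $n$-equilibrium of $r\mathbb{B}^n$, in the annulus $\{r\le|x|\le R\}$ one has $u_K\ge 0=v$ on the inner sphere and, by the asymptotics of $u_K$, $u_K-v\ge -\varepsilon$ on $\{|x|=R\}$ for $R$ large; the $n$-Laplace comparison principle, followed by $R\to\infty$ and $\varepsilon\to 0^+$, yields the claim.

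Next I would fix $x_0\in \partial K$ at which the outer unit normal $\nu$ to $K$ exists, which holds $dS$-a.e. on $\partial K$ since $K$ is convex. The supporting hyperplane at $x_0$ shows that the open ball $B_\rho:=B(x_0+3r\nu,\,3r)$ lies in $\mathbb{R}^n\setminus K$; its centre $c_B=x_0+3r\nu$ obeys $|c_B|\ge 3r-|x_0|\ge 2r$, so the first step gives $u_K(c_B)\ge \ln 2$. The Harnack inequality for positive $n$-harmonic functions (with constant $C_H=C_H(n)$) then yields $u_K\ge (\ln 2)/C_H$ on the mid-sphere $\{|y-c_B|=3r/2\}$. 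In the annulus $A=\{3r/2\le |y-c_B|\le 3r\}$ I would compare $u_K$ with the explicit radial barrier
\begin{equation*}
w(y)=C_H^{-1}\ln\bigl(3r/|y-c_B|\bigr),
\end{equation*}
which is $n$-harmonic in $A$, equals $(\ln 2)/C_H$ on the inner sphere and vanishes on the outer one, so $u_K\ge w$ on $\partial A$ and hence throughout $A$ by the $n$-Laplace comparison principle. Evaluating along the inward ray $y=x_0+t\nu$, $t\in(0,3r/2)$, and sending $t\to 0^+$ gives $\partial_\nu u_K(x_0)\ge 1/(3rC_H)$. Because $u_K\equiv 0$ on $\partial K$ forces the tangential component of the non-tangential limit $\nabla u_K(x_0)$ to vanish (by the Lewis--Nystr\"om theorem already invoked in Section \ref{s41}), one concludes $|\nabla u_K(x_0)|\ge 1/(3rC_H)$ for $dS$-a.e. $x_0\in \partial K$, and $c:=1/(3rC_H)$ depends only on $r$ and $n$.

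The main obstacle is the unbounded-domain comparison in the first step, which must be reduced to the standard bounded comparison via annular exhaustion so as to convert the asymptotic condition at infinity into a genuine Dirichlet condition on $\{|x|=R\}$; once this is in place, Harnack, the $n$-Laplace comparison on the bounded annulus $A$, and the non-tangential gradient limit enter in a routine way.
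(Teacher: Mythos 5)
Your proof is correct but follows a genuinely different route from the paper's. The paper argues through the Colesanti--Cuoghi support-function machinery: for the sublevel set $\check{K}_\tau=\{\check{u}_t\le\tau\}$, their Appendix Theorems A.1--A.2 identify $\partial_\tau h_{\check{K}_\tau}\big|_{\tau=0}$ with $|\nabla\check{u}_t|^{-1}$ at the corresponding boundary point and show this $\tau$-derivative is non-decreasing, so a diameter bound $c_0$ for $\check{K}_{1/2}$ (obtained by comparison with the equilibrium of $r\mathbb B^n$) forces $\partial_\tau h\big|_{\tau=0}\le 2c_0$ and hence $|\nabla u_K|\ge(2c_0)^{-1}$ on $\partial K_t$, followed by $t\to 0^+$. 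You instead run a direct Hopf-type barrier argument: first $u_K(x)\ge\ln(|x|/r)$ for $|x|\ge r$, proved by comparing $u_K$ with the explicit equilibrium $\ln(|x|/r)$ of $r\overline{\mathbb B^n}$ on exhausting annuli using $\alpha_K=-\ln\hbox{ncap}(K)\ge-\ln r$; then at $dS$-a.e.\ $x_0\in\partial K$ the supporting hyperplane supplies the exterior ball $B(c_B,3r)$ with $|c_B|\ge 2r$, so $u_K(c_B)\ge\ln 2$, and Harnack followed by the explicit $n$-harmonic radial barrier $C_H^{-1}\ln(3r/|y-c_B|)$ yields $\partial_\nu u_K(x_0)\ge (3rC_H)^{-1}$. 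Both arguments are sound; yours bypasses the Colesanti--Cuoghi appendix entirely and produces an explicit constant $c=(3rC_H)^{-1}$ depending only on $r$ and the Harnack constant for positive $n$-harmonic functions, at the (mild) cost of invoking the Harnack inequality. One small remark: the observation that the tangential component of the non-tangential gradient vanishes is unnecessary, since $|\nabla u_K(x_0)|\ge\nabla u_K(x_0)\cdot\nu$ holds automatically; and the Harnack step should be phrased for the compactly included closed ball $\overline{B(c_B,3r/2)}\subset B(c_B,3r)$, where the dependence of $C_H$ on the radius ratio (here fixed at $1/2$) and $n$ alone is standard.
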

\begin{proof} Suppose $u=u_K$ and $t_0\in (0,1)$ obey
$$
K_t=\{x\in\mathbb R^n\setminus{K}:\ u(x)\le t\}\subseteq r\mathbb B^n\quad\forall\quad t\in (0,t_0).
$$
Note that $K_t$ is $C^2$ strictly convex and the existence of $t_0$ is ensured by the continuity of $u$ in $\mathbb R^n\setminus K$ (cf. \cite[Theorem 2.2]{ColC}). Now, for $t\in (0,t_0)$ let
$$
\check{u}_t(x)={u(x)}-{t}\quad\forall\quad x\in\mathbb R^n\setminus K_t.
$$
Then $\check{u}_t$ is the solution of (\ref{e16}) for $K_t$, and in $C^2(\mathbb R^n\setminus K_t)$. For $\tau\in [0,1)$ let $\check{K}_\tau=\{x\in \mathbb R^n\setminus K_t:\ \check{u}_t(x)\le \tau\}$ and
$h(\cdot,\tau)$ be its support function $h_{\check{K}_\tau}$. Since $\check{K}_0={K_t}\subseteq r\mathbb B^n$, $\check{u}_t$ is controlled, via the maximum principle, by the $n$-equilibrium potential of $r\mathbb B^n$. Consequently, there is a constant $c_0>0$ depending on $n$ and $r$ such that
$$
\hbox{diam}(\check{K}_{2^{-1}})=\hbox{diam}(\{x\in\mathbb R^n: 2^{-1}<u(x)\le 1\})\le c_0.
$$
Moreover, we have
$$0\le \inf_{x\in\mathbb S^{n-1}}h(x,2^{-1})\le\sup_{x\in\mathbb S^{n-1}}h(x,2^{-1})\le c_0,$$
whence deriving
$$
h(x,0)=h(x,2^{-1})-\int_{0}^{2^{-1}}\frac{\partial h}{\partial \tau}(x,\tau)\,d\tau\quad\forall\quad x\in\mathbb S^{n-1}.
$$
From \cite[Theorem A.2]{ColC} it follows that $s\mapsto\frac{\partial h}{\partial \tau}(x,\tau)$ is a non-decreasing function on $[0,1)$. This monotonicity and the mean-value theorem for derivatives yield
$$
\frac{\partial h}{\partial \tau}(x,\tau)\Big|_{\tau=0}\le 2\big(h(x,2^{-1})-h(x,0)\big)\le 2h(x,2^{-1})\le 2c_0\ \ \forall\ \ x\in\mathbb S^{n-1}.
$$
Meanwhile, an application of \cite[Theorem A.1]{ColC} gives
$$
\frac{\partial h}{\partial \tau}(x,\tau)|_{\tau=0}={|\nabla{\check u}_t(x)|^{-1}},
$$
where $x\in\partial K_t$ satisfies 
$$
x={(\nabla\check{u}_t(x))}{|\nabla\check{u}_t(x)|^{-1}}\ \  \&\ \ \check{u}_t(x)=0.
$$
As a result, we get
$$
\inf_{x\in\partial K_t}|\nabla{u}(x)|=\inf_{x\in\partial K_t}|\nabla\check{u}_t(x)|\ge (2c_0)^{-1}.
$$
The desired assertion follows by letting $t\to 0$ and using the existence of the non-tangential maximal function of $|\nabla u|$ on $\partial K$.
\end{proof}

\section{Hadamard's variation for conformal capacities}\label{s5}
\setcounter{equation}{0}

\subsection{Hadamard's variation: the smooth case}\label{s51} For $K_1,K_2\in\mathbb K^n$ and $0\le t_1,t_2$ define
$$
t_1K_1+t_2K_2=\{x=t_1x_1+t_2x_2:\ x_j\in K_j\}.
$$
In accordance with Colesant-Cuoghi's \cite[Theorem 3.1]{ColC} (cf. Borell \cite{Bor} for $n=2$), we have the following Brunn-Minkowski inequality for $t\in [0,1]$ and $K_1,K_2\in\mathbb K^n$:

\begin{equation}\label{e41}
\hbox{ncap}(tK_1+(1-t)K_2)\ge t\hbox{ncap}(K_1)+(1-t)\hbox{ncap}(K_2)
\end{equation}
with equality if and only if $K_1$ is a translate and a dilate of $K_2$.

Notice that (\ref{e41}) implies that
$$
\frac{d^2}{dt^2}\hbox{ncap}(tK_1+(1-t)K_2)\big|_{t=0}\le 0.
$$
So, we get the following assertion extending the smooth two-dimensional Hadamard's variation formula (cf. \cite{Sch}).

\begin{theorem}\label{t41} If $K_0,K_1\in \mathbb K^n$ are $C^2$ strictly convex, then
\begin{equation}\label{e42b}
\frac{d}{dt}\ln\hbox{ncap}(K_0+tK_1)\big|_{t=0}={\sigma_{n-1}}^{-1}\int_{\partial K_0}h_{K_1}(g)|\nabla u_{K_0}|^n \,dS,
\end{equation}
equivalently,
\begin{equation}\label{e43a}
\frac{d}{dt}\ln\hbox{ncap}((1-t)K_0+tK_1)\big|_{t=0}={\sigma_{n-1}}^{-1}\int_{\partial K_0}\frac{|\nabla u_{K_0}|^n}{\big(h_{K_1}(g)-h_{K_0}(g)\big)^{-1}} \,dS.
\end{equation}
Consequently,
\begin{equation}\label{e43b}
\frac{\sigma_{n-1}}{\hbox{ncap}(K_0)}\le\int_{\partial K_0}|\nabla u_{K_0}|^n\,dS
\end{equation}
with equality if $K_0$ is a ball.
\end{theorem}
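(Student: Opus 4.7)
My strategy is to reduce the variation of $\hbox{ncap}(K_0 + tK_1)$ at $t=0$ to a classical Hadamard-type variation of the condenser $n$-capacity $C_R(t) := \hbox{ncap}(R\mathbb B^n, K_0 + tK_1)$ for large fixed $R$, and then to pass $R \to \infty$ using the asymptotics already developed in the proof of Theorem \ref{t21}. The setup of the boundary motion is clean: since $K_0, K_1$ are $C^2$ strictly convex, so is $K_t = K_0 + tK_1$ for every small $t$, with $h_{K_t} = h_{K_0} + t h_{K_1}$ and inverse Gauss map $\xi \mapsto \nabla h_{K_t}(\xi)$. Thus at $t = 0$ the boundary point with outer unit normal $\xi$ moves with velocity $\nabla h_{K_1}(\xi)$, whose normal component equals $h_{K_1}(\xi)$ by Euler's homogeneity identity for $h_{K_1}$.

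Let $v_{R,0}$ denote the normalised $n$-capacity potential of $(R\mathbb B^n, K_0)$ (that is, $v_{R,0} = 0$ on $\partial K_0$ and $v_{R,0} = 1$ on $\partial(R\mathbb B^n)$). I would then apply a direct shape-differentiation to the Dirichlet $n$-energy: by Reynolds' transport theorem applied to $\int |\nabla v_{R,t}|^n\,dV$, integration by parts against the linearisation $\dot v := \partial_t v_{R,t}|_{t=0}$, and the fact that $v_{R,0}$ is $n$-harmonic with the linearised boundary conditions $\dot v = -|\nabla v_{R,0}| h_{K_1}(g)$ on $\partial K_0$ and $\dot v = 0$ on $\partial(R\mathbb B^n)$, the interior contributions cancel and I expect
\[
C'_R(0) \;=\; (n-1)\int_{\partial K_0} h_{K_1}(g)\,|\nabla v_{R,0}|^n\,dS.
\]

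Next, from the proof of Theorem \ref{t21} one has the uniform-on-compacta relation $v_{R,0}(x) = (\ln R + \alpha(0))^{-1} u_0(x)(1 + o_R(1))$, where $\alpha(0) = -\ln\hbox{ncap}(K_0)$ and $u_0 = u_{K_0}$, together with $C_R(t) = \sigma_{n-1}(\ln R + \alpha(t))^{1-n}(1 + o_R(1))^{n-1}$. Differentiating the latter in $t$ at $t = 0$, matching with the Hadamard formula above, cancelling the common factor $(n-1)(\ln R + \alpha(0))^{-n}$, and letting $R \to \infty$ will produce $-\sigma_{n-1}\alpha'(0) = \int_{\partial K_0} h_{K_1}(g)|\nabla u_0|^n\,dS$, which is (\ref{e42b}) via $\ln\hbox{ncap}(K_t) = -\alpha(t)$. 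For (\ref{e43a}), the identity $(1-t)K_0 + tK_1 = (1+s)^{-1}(K_0 + sK_1)$ with $s = t/(1-t)$ and the $1$-homogeneity of $\hbox{ncap}(\cdot)$, combined with Theorem \ref{p-41} (which supplies $\sigma_{n-1}^{-1}\int h_{K_0}(g)|\nabla u_0|^n dS = 1$), converts (\ref{e42b}) into (\ref{e43a}). For (\ref{e43b}), I would specialise (\ref{e42b}) to $K_1 = \mathbb B^n$: since $h_{\mathbb B^n} \equiv 1$ on $\mathbb S^{n-1}$ and $\hbox{ncap}(\mathbb B^n) = 1$ by Theorem \ref{t22}(ii), the Brunn-Minkowski inequality (\ref{e41}) together with $1$-homogeneity yields $\hbox{ncap}(K_0 + t\mathbb B^n) \ge \hbox{ncap}(K_0) + t$, so comparing the right-derivatives at $t = 0^+$ of $\ln\hbox{ncap}(K_0 + t\mathbb B^n)$ and of $\ln(\hbox{ncap}(K_0) + t)$ delivers $\sigma_{n-1}^{-1}\int|\nabla u_0|^n\,dS \ge 1/\hbox{ncap}(K_0)$, and equality for balls is checked from $u_{rB^n}(x) = \ln(|x|/r)$.

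The main obstacle is the rigorous passage $R \to \infty$: one needs the convergence $v_{R,0} \to (\ln R + \alpha(0))^{-1} u_0$ strengthened so that the associated boundary traces of $|\nabla v_{R,0}|^n$ converge in $L^1(\partial K_0,dS)$, and simultaneously that $t \mapsto u_{K_t}$ is $C^1$ near $t = 0$ in a topology preserving the trace of $|\nabla u_t|$ on $\partial K_t$. Both ingredients should follow from the $C^{1,\beta}$ boundary regularity of $n$-harmonic functions on $C^2$ domains used in Lewis-Nystr{\"o}m \cite{LewN} combined with the asymptotic expansion (\ref{e22}) of Kichenassamy-Veron \cite{KicV}, but ensuring that the $o_R(1)$ remainders are uniform in $t$ in a neighbourhood of $0$ — strongly enough to commute the $t$-differentiation with the limit $R \to \infty$ — will require some care.
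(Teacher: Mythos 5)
Your plan is a valid alternative but it differs in structure from the paper's proof, and the main gap you flag at the end is real and is precisely what the paper's construction is designed to avoid. You route through the truncated condenser $C_R(t)=\hbox{ncap}(R\mathbb B^n,K_0+tK_1)$, apply a classical Hadamard shape derivative to get
$C'_R(0)=(n-1)\int_{\partial K_0}h_{K_1}(g)|\nabla v_{R,0}|^n\,dS$,
and then try to send $R\to\infty$ and match against the asymptotics $C_R(t)\sim\sigma_{n-1}(\ln R+\alpha(t))^{1-n}$. As you yourself note, this requires commuting $\partial_t$ with $\lim_{R\to\infty}$, i.e. uniformity in $t$ of the $o_R(1)$ remainders in Kichenassamy--Veron's expansion and of the gradient traces of $v_{R,t}$ on $\partial K_t$. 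That is a genuinely nontrivial uniformity statement which you have not established; without it, the equality you deduce after ``matching'' is not justified.

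The paper proceeds differently and bypasses this issue. It works directly with the two exterior $n$-equilibrium potentials $u=u_{K}$ and $u_\epsilon=u_{K_\epsilon}$ (the limit objects, not the truncated condenser potentials) and forms the skew cross-energy
$\hbox{Dif}(\epsilon)=\int_{K^c}|\nabla u|^{n-2}\nabla u\cdot\nabla u_\epsilon\,dV-\int_{K_\epsilon^c}|\nabla u_\epsilon|^{n-2}\nabla u_\epsilon\cdot\nabla u\,dV$.
Computing $\hbox{Dif}(\epsilon)$ one way yields, after $\epsilon\to 0$, the boundary integral $\int_{\partial K}|\nabla u|^n\rho\,dS$ where $\rho$ is the normal speed of the perturbation; computing it another way, one rewrites each term as a difference of fluxes across $\partial(r\mathbb B^n)$ and uses only the pointwise expansion (\ref{e22}) at fixed $\epsilon$ to evaluate the $r\to\infty$ limit, producing $-\sigma_{n-1}\big(\ln\hbox{ncap}(K_\epsilon)-\ln\hbox{ncap}(K)\big)$. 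The two limits $r\to\infty$ and $\epsilon\to 0$ are thus taken in a fixed order inside a single identity, and no uniform-in-$t$ control of truncated potentials is needed. This is the key structural difference: if you want to repair your plan, you should mimic this and differentiate the limit exterior problem directly via a symmetric cross-energy identity rather than differentiate the truncated problem and then pass to the limit.

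Two smaller remarks. First, for the equivalence of (\ref{e42b}) and (\ref{e43a}) your use of $1$-homogeneity and Theorem \ref{p-41} is correct (and in fact the displayed computation in the paper also relies on the identity (\ref{e42a}) from Theorem \ref{p-41} when it replaces $\sigma_{n-1}$ by $\int h_{K_0}(g)|\nabla u_{K_0}|^n\,dS$ in the chain). Second, your derivation of (\ref{e43b}) via $K_1=\mathbb B^n$ together with $\hbox{ncap}(K_0+t\mathbb B^n)\ge\hbox{ncap}(K_0)+t$ is correct and equivalent to the paper's choice $K_1=\hbox{ncap}(K_0)\mathbb B^n$ combined with concavity; both yield the same lower bound.
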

\begin{proof} To derive (\ref{e42b}), note again that
$$
u(x)=\ln |x|-\ln\hbox{ncap}(K)+{o}(1)\quad \forall\ \ x\in \mathbb R^n\setminus K.
$$
Proving (\ref{e42b}) is equivalent to establishing the first variation of $u$. To do so, for an arbitrary small number $\epsilon>0$ let $K_\epsilon$ be such a convex body that its boundary $\partial K_\epsilon$ is obtained by shifting $\partial K$ an infinitesimal distance $\delta\nu=\epsilon\rho(s)$ along its outer unit normal $\nu$, where $\rho$ is a smooth function on $\partial K$:
$$
\partial K_\epsilon=\{x+\epsilon\rho(x)\nu(x):\ x\in\partial K\}.
$$
and denote by $u_\epsilon=u_{K_\epsilon}$.

For convenience, set $K^c=\mathbb R^n\setminus K$, $K_\epsilon^c=\mathbb R^n\setminus K_\epsilon$, and define $u(x)=0$ for $x\in K$ and $u_\epsilon(x)=0$ for $x\in K_\epsilon$. Consider the following difference
\begin{equation}\label{e46}
\hbox{Dif}(\epsilon)=\int_{K^c}|\nabla u|^{n-2}\nabla u\cdot\nabla u_\epsilon\,dV-\int_{K_\epsilon^c}|\nabla u_\epsilon|^{n-2}\nabla u_\epsilon\cdot\nabla u\,dV.
\end{equation}

On the one hand,
\begin{eqnarray*}
\hbox{Dif}(\epsilon)&&=\int_{K^c\setminus K_\epsilon^c}|\nabla u|^{n-2}\nabla u\cdot\nabla u_\epsilon\,dV\\
&&\ +\int_{K_\epsilon^c}(|\nabla u|^{n-2}-|\nabla u_\epsilon|^{n-2})\nabla u_\epsilon\cdot\nabla u\,dV\\
&&=\epsilon\int_{\partial K^c}|\nabla u|^{n-1}\Big(\frac{\partial u_\epsilon}{\partial\nu}\Big)\rho\,dS\\
&&\ +\int_{K_\epsilon^c}(|\nabla u|^{n-2}-|\nabla u_\epsilon|^{n-2})\nabla u_\epsilon\cdot\nabla u\,dV.\\
\end{eqnarray*}
This yields
$$
\lim_{\epsilon\to 0}\frac{\hbox{Dif}(\epsilon)}{\epsilon}=-\int_{\partial K}|\nabla u|^{n-1}\Big(\frac{\partial u}{\partial\nu}\Big)\rho\,dS.
$$

On the other hand, note that
$$
\begin{cases}
-\hbox{div}(|\nabla u_\epsilon|^{n-2}\nabla u_\epsilon=0\quad\hbox{in}\quad K^c_\epsilon;\\
-\hbox{div}(|\nabla u_\epsilon|^{n-2}\nabla u_\epsilon=0\quad\hbox{in}\quad K^c,
\end{cases}
$$
and
$$
\begin{cases}
\hbox{div}(u|\nabla u_\epsilon|^{n-2}\nabla u_\epsilon)=u\hbox{div}\big(|\nabla u_\epsilon|^{n-2}\nabla u_\epsilon\big)+|\nabla u_\epsilon|^{n-2}\nabla u_\epsilon\cdot\nabla u;\\
\hbox{div}(u_\epsilon|\nabla u|^{n-2}\nabla u)=u_\epsilon\hbox{div}\big(|\nabla u|^{n-2}\nabla u\big)+|\nabla u|^{n-2}\nabla u\cdot\nabla u_\epsilon.
\end{cases}
$$
So, an application of the divergence theorem gives
\begin{eqnarray*}
\int_{K^c}|\nabla u|^{n-2}\nabla u\cdot\nabla u_\epsilon\,dV
&&=\int_{K^c}\hbox{div}(u_\epsilon|\nabla u|^{n-2}\nabla u)\,dV\\
&&=\lim_{r\to\infty}\int_{K^c\setminus (r\mathbb B^n)^c}\hbox{div}(u_\epsilon |\nabla u|^{n-2}\nabla u)\,dV\\
&&=\int_{\partial K^c}u_\epsilon|\nabla u|^{n-2}\nabla u\cdot\nu\,dS\\
&&\ -\lim_{r\to\infty}\int_{\partial (r\mathbb B^n)^c}u_\epsilon|\nabla u|^{n-2}\nabla u\cdot\nu\,dS\\
&&=-\lim_{r\to\infty}\int_{\partial (r\mathbb B^n)^c}u_\epsilon|\nabla u|^{n-2}\nabla u\cdot\nu\,dS.
\end{eqnarray*}
Similarly, we have
$$
\int_{K_\epsilon^c}|\nabla u_\epsilon|^{n-2}\nabla u_\epsilon\cdot \nabla u\,dV=-\lim_{r\to\infty}\int_{\partial (r\mathbb B^n)^c}u|\nabla u_\epsilon|^{n-2}\nabla u_\epsilon\cdot\nu\,dS.
$$
Consequently,
\begin{eqnarray*}
\hbox{Dif}(\epsilon)&=&-\lim_{r\to\infty}\left(\int_{\partial (r\mathbb B^n)^c}u_\epsilon|\nabla u|^{n-2}\nabla u\cdot\nu\,dS-\int_{\partial (r\mathbb B^n)^c}u|\nabla u_\epsilon|^{n-2}\nabla u_\epsilon\cdot\nu\,dS\right)\\
&=&-\lim_{r\to\infty}\int_{\partial (r\mathbb
B^n)^c}(u_\epsilon-u)|\nabla u|^{n-2}\nabla
u\cdot\nu\,dS\\
&&\ +\lim_{r\to\infty}\int_{\partial (r\mathbb B^n)^c}u(|\nabla
u_\epsilon|^{n-2}\nabla u_\epsilon-|\nabla u|^{n-2}\nabla
u)\cdot\nu\,dS.
\end{eqnarray*}
This derives via (\ref{e42a})
\begin{eqnarray*}
\lim_{\epsilon\to 0}\frac{\hbox{Dif}(\epsilon)}{\epsilon}&=&\lim_{\epsilon\to 0}\left(\frac{\ln\hbox{ncap}(K_\epsilon)-\ln\hbox{ncap}(K)}{\epsilon}\right)\lim_{r\to\infty}\int_{\partial (r\mathbb B^n)^c}\frac{\nabla u\cdot\nu}{|\nabla u|^{2-n}}\,dS\\
&=&-\sigma_{n-1}\lim_{\epsilon\to
0}\frac{\ln\hbox{ncap}(K_\epsilon)-\ln\hbox{ncap}(K)}{\epsilon}.
\end{eqnarray*}

The above two formulas for $\lim_{\epsilon\to 0}\epsilon^{-1}\hbox{Dif}(\epsilon)$ derive
$$
\lim_{\epsilon\to
0}\frac{\ln\hbox{ncap}(K_\epsilon)-\ln\hbox{ncap}(K)}{\epsilon}
=\frac{1}{\sigma_{n-1}}\int_{\partial K}|\nabla u|^{n}\rho\,dS,
$$
and thereby verifying (\ref{e42b}) through letting $K=K_0$ and $\rho=h_{K_1}\circ g$.

Through the chain rule and the homogeneous property of the support function, (\ref{e42b}) immediately derives (\ref{e43a}) and vice visa. Now, because $t\mapsto\hbox{ncap}\big((1-t)K_0+tK_1\big)$ is concave on $[0,1]$; see also \cite{ColC}, if $K_1=r\mathbb B^n$ and $r=\hbox{ncap}(K_0)$ then an application of (\ref{e43a}) gives
\begin{eqnarray*}
0&\le&\frac{d}{dt}\ln\hbox{ncap}\big((1-t)K_0+tK_1\big)\big|_{t=0}\\
&=&\Big(\frac{1}{\hbox{ncap}(K_0)}\Big)\frac{d}{dt}\hbox{ncap}\big((1-t)K_0+tK_1\big)\big|_{t=0}\\
&=&\frac1{\sigma_{n-1}}\int_{\partial K_0}\big(h_{K_1}(g)-h_{K_0}(g)\big)|\nabla u_{K_0}|^n\,dS\\
&=&\frac{1}{\sigma_{n-1}}\int_{\partial K_0}\big(r-h_{K_0}(g)\big)|\nabla u_{K_0}|^n\,dS,
\end{eqnarray*}
whence reaching (\ref{e43b}) via (\ref{e42a}).
\end{proof}

\subsection{Hadamard's variation: the non-smooth case}\label{s52} To generalize Theorem \ref{t41}, without loss of generality we may assume that the origin is an interior point of $K, K_j\in\mathbb K^n$, write $\varrho_K:\mathbb S^{n-1}\mapsto\partial K$ and $\varrho_{K_j}:\mathbb S^{n-1}\mapsto\partial K_j$ for the radial projections
$$
\mathbb S^{n-1}\ni\theta\mapsto\varrho_K(\theta)=r_K(\theta)\theta\in\partial K
$$
and
$$
\mathbb S^{n-1}\ni\theta\mapsto\varrho_{K_j}(\theta)=r_{K_j}(\theta)\theta\in\partial K_j
$$
respectively, where $r_K(\theta)$ and $r_{K_j}$ are the unique positive numbers ensuring $r_K(\theta)\theta\in\partial K$ and $r_{K_j}(\theta)\theta\in\partial K_j$ respectively, and set
$$
D(\theta)={|\nabla u_K(\varrho_K(\theta))|r_K(\theta)}{(h_K\big(g(\varrho_K(\theta))\big))^{-\frac1n}}
$$
and
$$
D_j(\theta)={|\nabla u_{K_j}(\varrho_{K_j}(\theta))|r_{K_j}(\theta)}{(h_{K_j}\big(g(\varrho_{K_j}(\theta))\big))^{-\frac1n}}
$$
respectively.

In the sequel, we will use the fact that $dS(x)=|x|^n(x\cdot g(x))^{-1}d\theta$ holds for $\theta=x/|x|$.

\begin{theorem}\label{Le} For $\{K, K_1, K_2,...\}\subseteq\mathbb K^n$, $\epsilon>0$ and $\alpha>0$, there exist $s_0>0$, $\eta>0$ and a family of balls $\mathcal B$ on $\mathbb S^{n-1}$ such that:
\begin{itemize}
\item[\rm(i)] every member in $\mathcal B$ has radius $s_0$;
\item[\rm(ii)] there is a constant $N>0$ depending only on the inner and outer radii of $K$, such that any point of $\mathbb S^{n-1}$ belongs to at most $N$ balls of $\mathcal B$;
\item[\rm(iii)] $S(\mathbb S^{n-1}\setminus F)<\epsilon$ where $F=\cup_{B\in\mathcal{B}}B$;
\item[\rm(iv)] if ${d}_H(K_j,K)<\eta$, then for any $B\in\mathcal B$ one has
$$
s_0^{1-n}\left(\int_B\Big|\Big(\frac{D_j(\theta)}{D(\theta)}\Big)^{n-1}-1\Big|^\alpha\,d\theta+
\int_B\Big|\Big(\frac{D(\theta)}{D_j(\theta)}\Big)^{n-1}-1\Big|^\alpha\,d\theta\right)<\epsilon\,;
$$
\item[\rm(v)]
$$
\lim_{j\to\infty}\int_{\mathbb S^{n-1}}\big|D_i^n(\theta)-D^n(\theta)\big|\,d\theta=0\,.
$$
\end{itemize}
\end{theorem}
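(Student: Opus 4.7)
The plan is to first rewrite everything as integrals on $\mathbb{S}^{n-1}$ via the Jacobian identity $dS(x)=r_K^n(\theta)(h_K(g(\varrho_K(\theta))))^{-1}\,d\theta$ for $x=\varrho_K(\theta)\in\partial K$, noted just before the statement. Under this change of variable, the identity $\int_{\partial K}h_K(g)|\nabla u_K|^n\,dS=\sigma_{n-1}$ of Theorem \ref{p-41} becomes
\[
\int_{\mathbb{S}^{n-1}}h_K(g(\varrho_K(\theta)))\,D^n(\theta)\,d\theta=\sigma_{n-1},
\]
and the same identity holds with $K_j$ in place of $K$. This is the right framework: a single conserved quantity controlling all of the $D_j^n$.

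My first main step is to upgrade the a.e.\ convergence $D_j(\theta)\to D(\theta)$ on $\mathbb{S}^{n-1}$ (under $d_H(K_j,K)\to 0$) to $L^1$ convergence of $D_j^n$. The uniform convergences $r_{K_j}\to r_K$ and $h_{K_j}\to h_K$ on $\mathbb{S}^{n-1}$ follow from Hausdorff convergence together with the fact that the origin is uniformly interior, and $g(\varrho_{K_j}(\theta))\to g(\varrho_K(\theta))$ holds at every point of differentiability of $\partial K$, hence a.e. Combining this with the (sensitive) a.e.\ convergence of the boundary gradients $|\nabla u_{K_j}(\varrho_{K_j}(\theta))|\to|\nabla u_K(\varrho_K(\theta))|$ gives $D_j\to D$ a.e.\ on $\mathbb{S}^{n-1}$. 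Since $h_{K_j}(g(\varrho_{K_j}))$ is uniformly bounded above and below away from $0$ (using that $0$ is interior to all $K_j$), the conserved identity above, combined with a.e.\ convergence, triggers Scheff\'e's theorem and yields item (v).

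For items (i)--(iv), I would invoke Egorov's theorem to produce $F_\epsilon\subseteq\mathbb{S}^{n-1}$ with $S(\mathbb{S}^{n-1}\setminus F_\epsilon)<\epsilon$ on which $D_j\to D$ uniformly, and then cover $F_\epsilon$ up to an additional $\epsilon$-error by a single-radius family $\mathcal{B}$ of geodesic balls of some small radius $s_0$ on $\mathbb{S}^{n-1}$, using a Besicovitch-type covering argument. Transporting this covering back through the bi-Lipschitz radial map $\varrho_K$ shows that the overlap constant $N$ can be arranged to depend only on $n$ and on the inner and outer radii of $K$, giving (i)--(iii). By Theorem \ref{l5c} the limit $D$ is bounded below a.e.\ on $\partial K$, and by compactness $D$ is bounded above and below on $F_\epsilon$, so uniform convergence $D_j\to D$ on $F_\epsilon$ forces $(D_j/D)^{n-1}-1$ and $(D/D_j)^{n-1}-1$ to be uniformly small there. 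Choosing $\eta$ small enough that $d_H(K_j,K)<\eta$ triggers this uniform smallness, item (iv) follows immediately, because $s_0^{1-n}$ normalizes $\int_B d\theta\simeq s_0^{n-1}$ to a spherical average.

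The hard part, and the technical heart of the lemma, will be the a.e.\ convergence of the boundary gradients $|\nabla u_{K_j}(\varrho_{K_j}(\cdot))|\to|\nabla u_K(\varrho_K(\cdot))|$: for $n\geq 3$ the equation is genuinely nonlinear, the boundary is moving with $j$, and one cannot simply pass to the limit in the PDE. The natural route is to transplant each $u_{K_j}$ to the fixed reference boundary $\partial K$ via the bi-Lipschitz map $\varrho_K\circ\varrho_{K_j}^{-1}$, exploit the interior $C^{1,\alpha}$ regularity of $n$-harmonic functions together with the non-tangential boundary theory of Lewis--Nystr\"om invoked already in Theorems \ref{p-41} and \ref{l5c}, and use the uniform nondegeneracy of $|\nabla u_{K_j}|$ supplied by Theorem \ref{l5c} to prevent concentration in the limit. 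Carrying out this uniform boundary analysis rigorously, while simultaneous with Hausdorff perturbation of the domain, is where essentially all the work lies.
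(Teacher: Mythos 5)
Your proposal routes everything through a claimed a.e.\ pointwise convergence $D_j(\theta)\to D(\theta)$ on $\mathbb{S}^{n-1}$, and then uses Scheff\'e for (v) and Egorov for (i)--(iv). But you explicitly flag this pointwise convergence of boundary gradients as ``the hard part'' and ``where essentially all the work lies'' without supplying it; that is precisely where the proposal fails as a proof. A.e.\ convergence of the non-tangential gradient $|\nabla u_{K_j}|$ under Hausdorff perturbation of a merely Lipschitz (convex) domain is \emph{not} standard, and for $n\ge3$ nothing in the cited boundary regularity theory delivers a pointwise limit at a fixed boundary point: the Lewis--Nystr\"om results give BMO control, an $A_\infty$-type doubling property, and $L^q$ ($q>n$) boundedness of $|\nabla u_K|$ on $\partial K$, but not a.e.\ pointwise stability of $|\nabla u_{K_j}|$ along a varying boundary. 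The paper's proof deliberately avoids needing this: it uses Jerison's Lemma 3.3 (graph representation over a disjoint family of boundary balls), the BMO estimate of \cite[Theorem 2]{LewN8} for $\ln D$ and $\ln D_j$ with a \emph{shared} constant $c_B$, John--Nirenberg to pass to $L^\alpha$ control on $c'_B(D_j/D)^{n-1}-1$, a maximum-principle/$n$-harmonic-measure comparison to force $c'_B=1$, and then the $q>n$ higher integrability of \cite[Theorem 3]{LewN} together with H\"older to upgrade the local estimates of (iv) to the global $L^1$ convergence of (v). None of these steps invokes pointwise convergence of the boundary gradient.

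Two secondary issues in the proposal. First, your appeal to Scheff\'e uses the conserved quantity $\int h_{K_j}(g(\varrho_{K_j}))\,D_j^n\,d\theta=\sigma_{n-1}$; since the weight $h_{K_j}\circ g\circ\varrho_{K_j}$ itself changes with $j$, Scheff\'e does not apply directly to $D_j^n$, so even granting a.e.\ convergence you would need a separate argument (e.g.\ via the weighted densities) to conclude $\int D_j^n\to\int D^n$. Second, your Egorov step builds a set $F_\epsilon$ on which $D_j\to D$ \emph{uniformly as} $j\to\infty$; but item (iv) requires a single $\eta>0$ such that $d_H(K_j,K)<\eta$ (for \emph{any} index $j$) implies the estimate on every $B\in\mathcal B$. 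Egorov gives no control on the relationship between $\eta$ and the rate of convergence on $F_\epsilon$, so this step does not yield the claimed uniform-in-$j$ statement. The paper's BMO/John--Nirenberg argument is local and quantitative, which is exactly what produces a genuine $\eta$.
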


\begin{proof} According to Jerison's \cite[Lemma 3.3]{Je96a}, we have that
for any $\epsilon>0$ there exists $\eta>0$ and a finite disjoint
collection of open balls $B_{r_k}(z_k)$ (centered at $z_k$ with radius $r_k$) such that $z_k\in\partial K$
and for any convex body $L\in\mathbb K^n$ for which ${d}_H(L,K)<\eta$:
\begin{itemize}
\item[\rm(a)] $S\big(\partial L\setminus\cup_k B_{r_k}(z_k)\big)<\epsilon$;
\item[\rm(b)] after a suitable rotation and translation depending on $k$, one has that $\partial K$ and $\partial L$ are given on $B_{r_k}(z_k)$ by the graphs of functions $\phi$ and $\psi$ respectively, enjoying
$$
\sup \left\{|\nabla \phi(x)|+|\nabla\psi(x)|\,:\,|x|<\epsilon^{-1}{r_k},\
\mbox{$\phi$\ \&\ $\psi$ differentiable at
$x$}\right\}\le\epsilon\,.
$$
\end{itemize}
Now, given $\epsilon>0$. Following the beginning part of the proof of Jerison's \cite[Lemma 3.7]{Je96a} we choose a sufficiently small number $s_0<\min\{r_k\}$ such that the Jacobians of the change of variables $\varrho_{K_k}$ and $\varrho_K$ vary by at most $\epsilon$ as $\theta$ varies by the distance $s>0$ and $\varrho_K(\theta)$ is contained in $\cup_{k}B_{r_k}(z_k)$. As a consequence, we can select $\mathcal{B}$ obeying (i)-(ii)-(iii) described as above.

Meanwhile, from Lewis-Nystr\"om's \cite[Theorem 2]{LewN8} it follows that for each $s\in (0,s_0)$ and each ball $B$ of radius $s$ in the concentric $\epsilon^{-1}$ multiple of any element in $\mathcal{B}$, there is a constant $c_B$ such that
\begin{equation}\label{BMO}
s^{1-n}\int_B|\ln D(\theta)-c_B|\,d\theta<\epsilon.
\end{equation}
Furthermore, using the previously-stated (a)-(b) we can take $\delta>0$ small enough to obtain
\begin{equation}\label{BMOa}
s^{1-n}\int_B|\ln D_j(\theta)-c_B|\,d\theta<\epsilon\quad\forall\quad s\in (0,\delta).
\end{equation}
A combination of (\ref{BMO}) and (\ref{BMOa}) gives
\begin{equation*}\label{BMOb}
s^{1-n}\int_B\Big|\ln\frac{ D_j(\theta)}{D(\theta)}\Big|\,d\theta<2\epsilon.
\end{equation*}
Applying John-Nirenberg's exponential inequality (cf. \cite{JoNi61}) for a BMO-function to (\ref{BMO}), we obtain that given $\alpha>0$ and for arbitrarily small $\epsilon'>0$ one can take $\eta'>0$ and $s_0$ so small that for each $B\in\mathcal B$ there is a constant $c'_B$ ensuring
\begin{equation}\label{eqe5}
s_0^{1-n}\int_B\Big|c'_B\Big(\frac{D_j(\theta)}{D(\theta)}\Big)^{n-1}-1\Big|^\alpha\,d\theta<\epsilon'.
\end{equation}
 Note that $\eta'$ and $s_0$ can be chosen small enough to ensure that
 for each $B\in\mathcal B$ one has
\begin{equation}\label{eqe6}
\frac{\int_B D_j^{n-1}(\theta)\,d\theta}{\int_B D^{n-1}(\theta)\,d\theta}=\Big(1+{O}(\epsilon')\Big)\frac{\int_{\varrho_{\Omega_j}(B)} |\nabla u_{K_j}|^{n-1}\,dS}{\int_{\varrho_\Omega(B)} |\nabla u_K|^{n-1}\,dS},
\end{equation}
where $O(\epsilon')$ is a positive big-oh function of $\epsilon'$.

Next, we are about to show that $c'_B$ in (\ref{eqe5}) is equal to $1$. To this end, let us fix $s_0$ and allow $\eta$ to rely on $s_0$. Note that the quotient on the right side of (\ref{eqe6}) is the ratio of the $n$-harmonic measures (cf. \cite{Lin}) of the sets $\varrho_j(B)$ and $\varrho(B)$. So, employing the maximum principle to compare $n$-harmonic functions in $\mathbb R^n\setminus K_j$ to $n$-harmonic functions in $\mathbb R^n\setminus\rho K$ (where $\rho K$ means a $\rho$-dilation of $K$) , we can take $\eta>0$ smaller still, relying on $s_0$ such that
\begin{equation}\label{eqe7}
\left|\frac{\int_B D_j^{n-1}(\theta)\,d\theta}{\int_B D^{n-1}(\theta)\,d\theta}-1\right|\lesssim\epsilon'
\end{equation}
holds for any $B\in\mathcal B$. In the above and below, $U\lesssim V$ stands for $U\le c_nV$ for a dimensional constant $c_n>0$.

Using the $q>n$-harmonic setting of Lewis-Nystr\"om's \cite[Theorem 3]{LewN} and the H\"older inequality we find that
\begin{equation}\label{eqe8}
\left(\frac{1}{S(\varrho_\Omega(B))}\int_{\varrho_\Omega(B)}|\nabla u_K|^n\,dS\right)^\frac{n-1}{n}\lesssim\frac{1}{S(\varrho_K(B))}\int_{\varrho_K(B)}|\nabla u_K|^{n-1}\,dS
\end{equation}
is valid for any ball centered at $\partial K$. Clearly, a similar estimate is valid for each $\partial K_j$. Thus,
\begin{equation}\label{eqe9}
\left(s_0^{1-n}\int_{B}D^n(\theta)\,d\theta\right)^\frac{n-1}{n}\lesssim s_0^{1-n}\int_{B}D^{n-1}(\theta)\,d\theta
\end{equation}
 and similarly for $D_j$. Now, using H\"older's inequality plus (\ref{eqe9}), (\ref{eqe5}) and (\ref{eqe8}), we get that for each $B\in\mathcal B$,
 \begin{eqnarray*}
&& \frac{\int_B c'_B D_j^{n-1}(\theta)\,d\theta}{\int_B D^{n-1}(\theta)\,d\theta}-1\\
&&=\frac{\int_B c'_B \Big(\big(\frac{D_j(\theta)}{D(\theta)}\big)^{n-1}-1\Big)D^{n-1}(\theta)\,d\theta}{\int_B D^{n-1}(\theta)\,d\theta}\\
 &&\lesssim\left(\int_B \Big(c'_B\big(\frac{D_j(\theta)}{D(\theta)}\big)^{n-1}-1\Big)^n\,d\theta\right)^\frac1n
 \left(\frac{\Big(\int_B D^{n}(\theta)\,d\theta\Big)^\frac{n-1}{n}}{\int_B D^{n-1}(\theta)\,d\theta}\right)\\
 &&\lesssim\left(s_0^{1-n}\int_B \Big(c'_B\big(\frac{D_j(\theta)}{D(\theta)}\big)^{n-1}-1\Big)^n\,d\theta\right)^\frac1n\lesssim\epsilon'
 \end{eqnarray*}
 In a similar manner, we replace $c'_B D_j/D$ by $(D/c'_B)D_j$ in the above estimates to obtain
 $$
 \frac{\int_B D^{n-1}(\theta)\,d\theta}{\int_B c'_B D_j^{n-1}(\theta)\,d\theta}-1\lesssim\epsilon'.
 $$
 Since (\ref{eqe7}) yields
 $$
\left|\frac{\int_B D_j^{n-1}(\theta)\,d\theta}{\int_B D^{n-1}(\theta)\,d\theta}-1\right|\lesssim\epsilon',
 $$
 we must have $|c'_B-1|\lesssim\epsilon'$, whence getting $c'_B=1$. As a consequence of this and (\ref{eqe5}), we find
 $$
 s_0^{1-n}\int_B\Big|\Big(\frac{D_j(\theta)}{D(\theta)}\Big)^{n-1}-1\Big|^\alpha\,d\theta\lesssim\epsilon'\ \ \&\ \
 s_0^{1-n}\int_B\Big|\Big(\frac{D(\theta)}{D_j(\theta)}\Big)^{n-1}-1\Big|^\alpha\,d\theta\lesssim\epsilon',
$$
whence completing the proof of (iv).

Although the idea of verifying (v) is motivated by the argument for \cite[Proposition 4.3]{Je96a}, we still need more effort to adapt it to our nontrivial situation. Because of $q>n$ in \cite[Theorem 3]{LewN}, it is possible to find $\beta\in (1,\infty)$ such that
$n\beta/(\beta-1)=q$. Given $\epsilon>0$, take $\eta>0$ and $F$ in
accordance with (i)-(iv). Using the inequality
$$
|a^n-b^n|\le\frac{(a+b)|a^{n-1}-b^{n-1}|}{n^{-1}(n-1)}\quad\forall\
a,b\ge 0\,,
$$
the H\"older inequality and (\ref{e42a}), we achieve

\begin{eqnarray*}
&&\int_{F}|D_j^n(\theta)-D^n(\theta)|\,d\theta\\
&&\le\Big(\frac{n}{n-1}\Big)\int_{F}\big|D_j^{n-1}(\theta)-D^{n-1}(\theta)\big|\big(D_j(\theta)+D(\theta)\big)\,d\theta\\
&&\lesssim\left(\int_{F}\big|D_j^{n-1}(\theta)-D^{n-1}(\theta)\big|^\frac{n}{n-1}\,d\theta\right)^\frac{n-1}{n}\left(\int_F\big(D_j(\theta)+D(\theta)\big)^n
\,d\theta\right)^\frac{1}{n}\\
&&\lesssim\big(2\sigma_{n-1}\big)^\frac{1}{n}\left(\int_{F}\Big|\Big(\frac{D_j(\theta)}{D(\theta)}\Big)^{n-1}-1\Big|^\frac{n}{n-1}D^n(\theta)\,d
S(\theta)\right)^\frac{n-1}{n}\\
&&\lesssim\left(\int_{F}\Big|\Big(\frac{D_j(\theta)}{D(\theta)}\Big)^{n-1}-1\Big|^\frac{n\beta}{n-1}\,d
S(\theta)\right)^\frac{n-1}{n\beta}\left(\int_F D^q(\theta)\,d\theta\right)^\frac{n-1}{q},
\end{eqnarray*}
thereby deducing
\begin{equation}\label{eqeF}
\int_{F}\big|D_j^n(\theta)-D^n(\theta)\big|\,d\theta\lesssim\epsilon\quad\hbox{as}\quad j\to \infty,
\end{equation}
 through (iv) with $\alpha=q$ as well as \cite[Theorem 3]{LewN} insuring
 $ \int_{\mathbb S^{n-1}} D^q(\theta)\,d\theta<\infty.$

On the other hand, by the H\"older inequality we derive
\begin{eqnarray*}
&&\int_{\mathbb S^{n-1}\setminus F}|D_j^n(\theta)-D^n(\theta)|\,d\theta\\
&&\le\int_{\mathbb S^{n-1}\setminus F}\big(D_j^n(\theta)+D^n(\theta)\big)\,d\theta\\
&&\lesssim\big(S(\mathbb S^{n-1}\setminus
F)\big)^\frac{q}{q-n}\left(\int_{\mathbb S^{n-1}\setminus
F}\big(D_j^q(\theta)+D^q(\theta)\big)\,d\theta\right)^\frac{n}{q},
\end{eqnarray*}
whence getting (v) through (iii), (\ref{eqeF}) and \cite[Theorem 3]{LewN} which especially guarantees
$$
\sup_{j}\int_{\mathbb S^{n-1}\setminus
F}\big(D_j^q(\theta)+D^q(\theta)\big)\,d\theta<\infty.
$$
\end{proof}

With the help of Theorem \ref{Le}, we can establish the following weak convergence result for the measure induced by Theorem \ref{p-41}.

\begin{theorem}\label{th51} Let $K, K_j\in\mathbb K^n$ and $\lim_{j\to\infty}d_H(K_j,K)=0$. If $u,u_j$ are the $n$-equilibrium potentials of $K,K_j$ respectively, then $d\mu_j=(g_j)_\ast(|\nabla u_j|^n\,dS)$ converges weakly to $d\mu=g_\ast(|\nabla u|^n\,dS)$, i.e.,
$$
\lim_{j\to\infty}\int_{\mathbb S^{n-1}}f\,d\mu_j=\int_{\mathbb S^{n-1}}f\,d\mu\quad\forall\quad f\in C(\mathbb S^{n-1}).
$$
\end{theorem}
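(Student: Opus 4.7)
The plan is to convert both sides to integrals on $\mathbb{S}^{n-1}$ weighted by $D^n$ and $D_j^n$, apply Theorem \ref{Le}(v) to swap the weights, and then use pointwise a.e.\ convergence of the Gauss maps under Hausdorff convergence of convex bodies together with dominated convergence.

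First, using the identity $dS(x)=|x|^n(x\cdot g(x))^{-1}\,d\theta$ at $\theta=x/|x|$ together with $\varrho_K(\theta)\cdot g(\varrho_K(\theta))=h_K(g(\varrho_K(\theta)))$, the radial change of variables $x=\varrho_K(\theta)$ yields for every bounded Borel $f:\mathbb{S}^{n-1}\to\mathbb{R}$ that
\[
\int_{\mathbb{S}^{n-1}} f\,d\mu=\int_{\partial K} f(g(x))|\nabla u|^n\,dS(x)=\int_{\mathbb{S}^{n-1}} f(G(\theta))D(\theta)^n\,d\theta,
\]
where $G(\theta):=g(\varrho_K(\theta))$, with an analogous identity for $\mu_j$ in terms of $G_j(\theta):=g_j(\varrho_{K_j}(\theta))$ and $D_j$. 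For $f\in C(\mathbb{S}^{n-1})$ I then split
\[
\int f\,d\mu_j-\int f\,d\mu=\int f(G_j)(D_j^n-D^n)\,d\theta+\int\big(f(G_j)-f(G)\big)D^n\,d\theta.
\]
The first term is dominated by $\|f\|_\infty\int_{\mathbb{S}^{n-1}}|D_j^n-D^n|\,d\theta$, which tends to zero by Theorem \ref{Le}(v).

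For the second term I need $G_j\to G$ almost everywhere on $\mathbb{S}^{n-1}$. At every $\theta$ for which $K$ has a unique outer unit normal at $\varrho_K(\theta)$, this follows from a compactness argument: any accumulation point $\nu'$ of $\{G_j(\theta)\}$ satisfies $K_j\subseteq\{y:G_j(\theta)\cdot y\le G_j(\theta)\cdot\varrho_{K_j}(\theta)\}$, and passing to the Hausdorff limit together with $\varrho_{K_j}(\theta)\to\varrho_K(\theta)$ gives $K\subseteq\{y:\nu'\cdot y\le\nu'\cdot\varrho_K(\theta)\}$, whence $\nu'$ is an outer normal of $K$ at $\varrho_K(\theta)$, and uniqueness forces $\nu'=G(\theta)$. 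The set of boundary points of $K$ without a unique outer normal has zero $(n-1)$-dimensional Hausdorff measure, and its preimage under $\varrho_K$ is null in the spherical Lebesgue measure because $\varrho_K$ is bi-Lipschitz once the origin is an interior point of $K$. Hence $G_j\to G$ almost everywhere.

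Finally $|f(G_j)-f(G)|D^n\le 2\|f\|_\infty D^n$, and the integrability $\int_{\mathbb{S}^{n-1}} D^n\,d\theta<\infty$ was recorded in the proof of Theorem \ref{Le} via the Lewis--Nystr\"om higher integrability $D\in L^q(d\theta)$ for some $q>n$. Dominated convergence then forces the second integral to zero and completes the proof. The main obstacle is the a.e.\ convergence $G_j\to G$, which rests on uniqueness of the outer normal at smooth boundary points of $K$ together with the compactness argument above; the remaining work is bookkeeping once Theorem \ref{Le}(v) is in hand.
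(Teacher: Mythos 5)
Your proof is correct, and it takes a genuinely different route from the paper's. The paper also begins from Theorem~\ref{Le}(v), but uses it only to get convergence of the total masses $\mu_j(\mathbb S^{n-1})\to\mu(\mathbb S^{n-1})$; it then runs a portmanteau-style argument: for a closed $E\subseteq\mathbb S^{n-1}$ it compares $\varrho_{K_j}^{-1}(g_j^{-1}(E))$ with $\varrho_K^{-1}(U)$ for open $U\supseteq g^{-1}(E)$ to show $\limsup_j\mu_j(E)\le\mu(E)$, and combines this with the total-mass convergence to obtain $\liminf_j\mu_j(O)\ge\mu(O)$ for open $O$, forcing every subsequential weak limit to coincide with $\mu$. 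You instead use Theorem~\ref{Le}(v) head-on in the decomposition $\int f\,d\mu_j-\int f\,d\mu=\int f(G_j)(D_j^n-D^n)\,d\theta+\int(f(G_j)-f(G))D^n\,d\theta$: the first piece dies by Le(v), and the second you handle by establishing a.e.\ convergence $G_j\to G$ (via the standard fact that a convex body has a unique outer normal $S$-a.e., combined with the same semicontinuity of support half-spaces under Hausdorff convergence that the paper uses in its set-theoretic form) and invoking dominated convergence with the $L^q$, $q>n$, integrability of $D$. The two proofs share the same two ingredients--Le(v) and an upper semicontinuity property of the set-valued Gauss map--but organize them differently: yours is a pointwise argument and makes the cancellation explicit in one display, while the paper's works entirely at the level of measures of closed and open sets and so avoids appealing to the a.e.-uniqueness theorem for normals. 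Both are valid; yours is arguably more transparent, the paper's requires slightly less regularity-theoretic input.
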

\begin{proof} The following argument is analogous to \cite[Section 5]{CheY}. 
Recall that the push-forward measures $d\mu\ \&\ d\mu_j$ on $\mathbb S^{n-1}$ are determined respectively by
$$
\mu(E)=\int_{g^{-1}(E)}|\nabla u|^n\,dS\ \ \& \ \
\mu_j(E)=\int_{g_j^{-1}(E)}|\nabla u_j|^n\,dS\ \forall\ \ \hbox{Borel\ set}\ \ E\subseteq\mathbb S^{n-1},
$$
where $g$ and $g_j$ are the Gauss maps attached to $K$ and $K_j$ respectively. It remains to verify that $\mu$ is the weak limit of $\mu_j$ as $j\to\infty$.

An application of Theorem \ref{Le}(v) yields
\begin{equation}\label{eqeD}
\lim_{j\to\infty}\Big(\mu(\mathbb S^{n-1})-\mu_j(\mathbb S^{n-1})\Big)=\lim_{j\to\infty}\int_{\mathbb S^{n-1}}\big(D^n(\theta)-D^n_j(\theta)\big)\,d\theta=0.
\end{equation}
Note that ${g}^{-1}(E)\subseteq\partial K$ and ${g}_j^{-1}(E)\subseteq\partial K_j$
are closed (cf. \cite{CheY} and \cite{Je91, Je96a}) for any Borel set $E\subseteq\mathbb S^{n-1}$, and that if $\xi_j\in {g}_j(x_j)$ approaches $\xi$ and if $x_j\to x$ then $\xi\in {g}(x)$ and $x\in\partial K$. So, for any open neighborhood $U$ in $\partial K$ of the closed set ${g}^{-1}(E)$ we have that $\varrho_{K_j}^{-1}\big({g}_j^{-1}(E)\big)\subseteq \varrho^{-1}_{K}(U)$ as $j\to\infty$, whence finding that
\begin{equation*}
\limsup_{j\to\infty}\mu_j(E)\le\lim_{j\to\infty}\int_{\varrho_{K}^{-1}(U)}D_j^n(\theta)\,d\theta\le
\int_{\varrho_{K}^{-1}(U)}D^n(\theta)\,d\theta.
\end{equation*}
When the infimum ranges over all $U\supseteq {g}^{-1}(E)$, we get
$\limsup_{j\to\infty}\mu_j(E)\le\mu(E).$
This last inequality and (\ref{eqeD}) imply that for any open subset $O$ of $\mathbb S^{n-1}$,
\begin{eqnarray*}\label{eqeLI}
\liminf_{j\to\infty}\mu_j(O)&=&\liminf_{j\to\infty}\big(\mu_j(O)-\mu_j(\mathbb S^{n-1}\setminus O)\big)\\
&\ge&\liminf_{j\to\infty}\mu_j(\mathbb S^{n-1})-\mu(\mathbb S^{n-1}\setminus O)\\
&=& \mu(\mathbb S^{n-1})-\mu(\mathbb S^{n-1}\setminus O)=\mu(O).
\end{eqnarray*}
If $\tilde{\mu}$ is any weak limit of a subsequence of $\mu_j$, then the above inequalities on $\limsup_{j\to\infty}$ and $\liminf_{j\to\infty}$ deduce that
$\tilde{\mu}(C)\le{\mu}(C)$ and $\mu(O)\le\tilde{\mu}(O)$
hold for any closed $C\subseteq\mathbb S^{n-1}$ and any open $O\subseteq\mathbb S^{n-1}$. Consequently, for any closed $C\subseteq\mathbb S^{n-1}$ one has
$$
\mu(C)\ge\tilde{\mu}(C)=\inf\{\tilde{\mu}(O): \hbox{open}\ O\supset C\}\ge\inf\{\mu(O):\ \hbox{open}\ O\supset C\}={\mu}(C),
$$
and hence $\tilde{\mu}=\mu$.
\end{proof}

The following is the general variational result.

\begin{theorem}\label{t41g} (\ref{e42b})-(\ref{e43a})-(\ref{e43b}) are valid for $K_0,K_1\in\mathbb K^n$.
\end{theorem}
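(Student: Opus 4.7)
The plan is to pass from the smooth case of Theorem \ref{t41} to general convex bodies by Hausdorff approximation, using Theorem \ref{th51} to promote weak convergence of surface measures into convergence of the Hadamard derivative itself.

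Choose $C^2$ strictly convex bodies $K_0^{(j)}, K_1^{(j)} \in \mathbb K^n$ with $d_H(K_i^{(j)}, K_i) \to 0$ (e.g. by mollifying the support functions of $K_i + j^{-1}\mathbb B^n$), and for $t \ge 0$ set
$$
\psi(t) := \ln\hbox{ncap}(K_0 + tK_1), \quad \psi_j(t) := \ln\hbox{ncap}(K_0^{(j)} + tK_1^{(j)}).
$$
Remark \ref{r13} makes $\psi_j \to \psi$ pointwise on $[0,\infty)$; Brunn-Minkowski (\ref{e41}) makes $t \mapsto \hbox{ncap}(K_0 + tK_1)$ concave, so $\psi$ and each $\psi_j$ are concave (since $\ln$ is concave and increasing). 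Because $h_{K_0^{(j)} + tK_1^{(j)}} = h_{K_0^{(j)}} + t\, h_{K_1^{(j)}}$ inherits positive Hessian on $\mathbb S^{n-1}$, the body $K_0^{(j)} + tK_1^{(j)}$ is itself $C^2$ strictly convex, and Theorem \ref{t41} applies:
$$
\psi_j'(t) = \frac{1}{\sigma_{n-1}}\int_{\mathbb S^{n-1}} h_{K_1^{(j)}}\, d\mu_{j,t}, \quad \mu_{j,t} := g_*\!\left(|\nabla u_{K_0^{(j)}+tK_1^{(j)}}|^n\, dS\right).
$$

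Fix $t \ge 0$. The Hausdorff convergence $K_0^{(j)} + tK_1^{(j)} \to K_0 + tK_1$ and Theorem \ref{th51} together give weak convergence $\mu_{j,t} \to \mu_t := g_*(|\nabla u_{K_0+tK_1}|^n\, dS)$; coupled with the uniform convergence $h_{K_1^{(j)}} \to h_{K_1}$ on $\mathbb S^{n-1}$ (standard from $d_H$-convergence of support functions), one obtains
$$
\psi_j'(t) \longrightarrow R(t) := \frac{1}{\sigma_{n-1}}\int_{\mathbb S^{n-1}} h_{K_1}\, d\mu_t \quad \forall\, t \ge 0.
$$
For $0 \le t_0 < t_1 < \infty$, the fundamental theorem applied to the $C^1$ function $\psi_j$ yields $\psi_j(t_1) - \psi_j(t_0) = \int_{t_0}^{t_1} \psi_j'(s)\, ds$. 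Concavity of $\psi_j$ forces $\psi_j'$ to be monotone non-increasing, so on $[t_0, t_1]$ it is bounded by $\max\{|\psi_j'(t_0)|, |\psi_j'(t_1)|\}$, which is bounded in $j$ thanks to the pointwise convergence already established. Dominated convergence thus delivers
$$
\psi(t_1) - \psi(t_0) = \int_{t_0}^{t_1} R(s)\, ds,
$$
and a further application of Theorem \ref{th51} to the Hausdorff-continuous family $\{K_0 + tK_1\}_{t \ge 0}$ makes $R$ continuous on $[0,\infty)$. Hence $\psi \in C^1([0,\infty))$ with $\psi'(t) = R(t)$; reading off $t = 0$ produces (\ref{e42b}).

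The passage from (\ref{e42b}) to (\ref{e43a}) is just the chain rule together with the $1$-homogeneity of support functions, exactly as at the close of the proof of Theorem \ref{t41}; and (\ref{e43b}) is recovered from (\ref{e43a}) by choosing $K_1 = \hbox{ncap}(K_0)\,\mathbb B^n$ and invoking (\ref{e42a}) (already available for every $K_0 \in \mathbb K^n$ via Theorem \ref{p-41}) along with the concavity of $t \mapsto \hbox{ncap}((1-t)K_0 + tK_1)$. The central obstacle in the scheme above is the dominated-convergence step, which demands a uniform $L^\infty$ envelope for $\psi_j'$ on each compact interval; monotonicity of $\psi_j'$ from Brunn-Minkowski concavity reduces this envelope to pointwise control at the two endpoints, which is precisely the data Theorem \ref{th51} supplies.
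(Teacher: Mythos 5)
Your proposal is correct and follows the same broad strategy as the paper's proof (regularize $K_0,K_1$ by $C^2$ strictly convex bodies, invoke the smooth Hadamard formula of Theorem \ref{t41}, and pass to the limit via the weak convergence of push-forward measures from Theorem \ref{th51}), but the limiting step is executed differently. The paper works directly with $\Psi_j(t)=\hbox{ncap}\big((1-t)K_{0,j}+tK_{1,j}\big)$ and uses the concavity sandwich $\Psi_j'(t)\le\frac{\Psi_j(t)-\Psi_j(0)}{t}\le\Psi_j'(0)$ to pin down the one-sided derivative at $t=0$ by a double limit $j\to\infty$, $t\to 0$, with Theorem \ref{th51} applied twice (once in $j$, once in $t$). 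You instead establish pointwise convergence $\psi_j'(t)\to R(t)$ at every $t$, note that monotonicity of $\psi_j'$ (from concavity) gives a uniform $L^\infty$ envelope on each compact interval, and push through the fundamental theorem of calculus plus dominated convergence to conclude $\psi\in C^1([0,\infty))$ with $\psi'=R$. Your route is slightly more ambitious (it yields global $C^1$ regularity of $t\mapsto\ln\hbox{ncap}(K_0+tK_1)$, which the paper does not claim), while the paper's sandwich is more surgical for the sole derivative at $t=0$ that the theorem requires; both bury the real content in Theorem \ref{th51}. Two tiny caveats: the appeal to Remark \ref{r13} for $\psi_j\to\psi$ is really the continuity of $\hbox{ncap}$ on $\mathbb K^n$ under Hausdorff convergence (which the paper uses but does not crisply isolate in that remark), and you could avoid asserting $\psi_j\in C^1$ outright by invoking local absolute continuity of concave functions, though neither point affects the validity of the argument. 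The closing derivations of (\ref{e43a}) from (\ref{e42b}) and of (\ref{e43b}) from (\ref{e43a}) with $K_1=\hbox{ncap}(K_0)\,\overline{\mathbb B^n}$ match the paper.
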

\begin{proof} Given $K_0,K_1\in\mathbb K^n$. There are two sequences of $C^2$ strictly convex bodies $K_{0,j}, K_{1,j}$ such that
$$
\lim_{j\to\infty}d_H(K_{0,j},K_0)=0=\lim_{j\to\infty}d_H(K_{1,j},K_1).
$$
Now, for $t\in (0,1)$ and $j=1,2,...$ set
$$
\begin{cases}
& K_t=(1-t)K_0+tK_1,\ \ K_{t,j}=1-t)K_{0,j}+tK_{1,j};\\
& \Phi(t)=\hbox{ncap}(K_0+tK_1),\ \ \Phi_j(t)=\hbox{ncap}(K_{0,j}+tK_{1,j});\\
& \Psi(t)=\hbox{ncap}(K_t),\ \ \Psi_j(t)=\hbox{ncap}(K_{t,j}).
\end{cases}
$$
Note that
$$
t\mapsto\Psi_j(t)=(1-t)\Phi_j\Big(\frac{t}{1-t}\Big)
$$
is a concave function on $(0,1)$. So,
\begin{equation}\label{eIn}
\Psi_j'(t)\le\frac{\Psi_j(t)-\Psi_j(0)}{t}\le\Psi_j'(0)\quad\forall\quad t\in (0,1).
\end{equation}
A simple computation gives
$$
\Psi_j'(t)=-\Phi_j\Big(\frac{t}{1-t}\Big)+(1-t)^{-1}\Phi_j'\Big(\frac{t}{1-t}\Big)
$$
and
\begin{eqnarray*}
\Psi_j'(0)&=&-\Phi_j(0)+\Phi_j'(0)\\
&=&\frac{\hbox{ncap}(K_{0,j})}{\sigma_{n-1}}\left(-\sigma_{n-1}+\int_{\partial K_{0,j}}h_{K_{1,j}}(g)|\nabla u_{K_0,j}|^n\,dS\right)\\
&=&\frac{\hbox{ncap}(K_{0,j})}{\sigma_{n-1}}\int_{\partial K_{0,j}}\Big(h_{K_{1,j}}(g)-h_{K_{0,j}}(g)\Big)|\nabla u_{K_0,j}|^n\,dS,
\end{eqnarray*}
owing to (\ref{e42a}) and (\ref{e43a}). Upon letting $j\to\infty$ and $t\to 0$ in (\ref{eIn}), we use Theorem \ref{th51} to obtain
$$
\Psi'(0)=\frac{\hbox{ncap}(K_{0})}{\sigma_{n-1}}\int_{\partial K_{0}}
\Big(h_{K_{1}}(g)-h_{K_{0}}(g)\Big)|\nabla u_{K_0}|^n\,dS,
$$
whence establishing (\ref{e43a}), equivalently, (\ref{e42b}), and thus (\ref{e43b}).
\end{proof}

\section{Minkowski's problem for conformal capacities}\label{s6}

\setcounter{equation}{0}

\subsection{Prescribing volume variation}\label{s61} Given $K\in\mathbb K^n$. From the Gauss map $g: \partial K\to \mathbb S^{n-1}$ one can introduce the area function $\mathcal{H}^{n-1}_{\partial K}$ of $\partial K$ via setting
$$
\mathcal{H}^{n-1}_{\partial K}(E)=S\big(\{x\in\partial K:\ g(x)\cap E\not=\emptyset\})\quad\forall\quad\hbox{Borel\ subset}\ E\subseteq \mathbb S^{n-1}.
$$
This measure $d\mathcal{H}^{n-1}_{\partial K}$ is treated as the push-forward measure $g_\ast(dS)$ on $\mathbb S^{n-1}$ of the $n-1$ dimensional surface measure $dS$ on $\partial K$ through the inverse map $g^{-1}$ of $g$. Obviously, $\mathcal{H}^{n-1}_{\partial K}(\mathbb S^{n-1})=S(K)$, i.e., the surface area of $K$. Two more special facts on this measure are worth recalling. The first is that if $\partial K$ is polyhedron then  $d\mathcal{H}^{n-1}_{\partial K}=\sum_k c_k \delta_{\nu_k}$, where $\delta_{\nu_k}$ is the unit point mass at $\nu_k$ and $c_k$ is the $(n-1)$ dimensional measure of the face of $\partial K$ with outward unit normal being $\nu_k$. The second is that if $\partial K$ is strictly convex and smooth then $d\mathcal{H}^{n-1}_{\partial K}$ is absolutely continuous and so decided by $1/G(K,\cdot)$, where $G(K,\cdot)$ is the Gauss curvature of $\partial K$.

The classical Minkowski problem is to ask under what conditions on a given nonnegative Borel measure on $\mathbb S^{n-1}$ one can get a convex body $K\in\mathbb K^n$ such that
$d\mathcal{H}^{n-1}_{\partial K}=d\mu$. As well known, this problem is solvable if and only if the support of $\mu$ is not contained in any closed hemisphere and
$$
\int_{\mathbb S^{n-1}}\theta\cdot\xi\,d\mu(\xi)=0\quad\forall\quad\theta\in\mathbb S^{n-1}.
$$
Moreover, the above $K$ is unique up to translation -- this follows from the equality case of the well-known Brunn-Minkowski inequality for $V(\cdot)$:
$$
V(K_0+tK_1)^\frac1n\ge V(K_0)^\frac1n+tV(K_1)^\frac1n\quad\forall\quad K_0, K_1\in\mathbb K^n\quad\&\quad t\in [0,1].
$$
The foregoing inequality and the following Hadamard's variation formula:
$$
\frac{d}{dt}V(K_0+tK_1)\big|_{t=0}=\int_{\partial K_0}h_{K_1}(g)\,dS=\int_{\mathbb S^{n-1}}h_{K_1}\,d\mathcal{H}_{\partial K_0}^{n-1}\quad\forall\quad K_0, K_1\in\mathbb K^n
$$
give
$$
\int_{\mathbb S^{n-1}}h_{K_1}\,d\mathcal{H}^{n-1}_{\partial K_0}\ge nV(K_0)^{1-\frac{1}{n}}V(K_1)^\frac1n,
$$
whence ensuring that if $K_0$ is fixed and $K_1$ varies with $V(K_1)\ge 1$ then
$\int_{\mathbb S^{n-1}}h_{K_1}\,d\mathcal{H}^{n-1}_{\partial K_0}$
 reaches its minimum whenever $K_1=V(K_0)^{-\frac1n}K_0$. So, the just-described Minkowski problem is equivalent to the problem prescribing the first variation of volume, i.e., the following minimizing problem
 $$
 \inf\left\{\int_{\mathbb S^{n-1}} h_K\,d\mu:\ K\in\mathbb K^n\ \&\ V(K)\ge 1\right\}
 $$
 for a given nonnegative Borel measure $\mu$ on $\mathbb S^{n-1}$; see e.g.\cite{Col, Pog, Nir}.

\subsection{Prescribing conformal capacitary variation}\label{s62} As $V(\cdot)$ is replaced by $\hbox{ncap}(\cdot)$, we empoy Theorem \ref{t41} and (\ref{e41}) to obtain that
\begin{equation*}\label{e41add}
\int_{\partial K_0}h_{K_1}(g)|\nabla u_{K_0}|^n\,dS=\Big(\frac{\sigma_{n-1}}{\hbox{ncap}(K_0)}\Big)\frac{d}{dt}\hbox{ncap}(K_0+tK_1)\Big|_{t=0}
\ge\frac{\sigma_{n-1}\hbox{ncap}(K_1)}{\hbox{ncap}(K_0)}
\end{equation*}
holds for all $K_0,K_1\in\mathbb K^n$. Clearly, if $K_0\in\mathbb K^n$ is fixed and $K_1\in \mathbb K^n$ changes under $\hbox{ncap}(K_1)\ge 1$, then
$$
\int_{\mathbb S^{n-1}}h_{K_1}\,g_\ast(|\nabla u_{K_0}|^n\,dS)=\int_{\partial K_0}h_{K_1}(g)|\nabla u_{K_0}|^n\,dS\ge \frac{\sigma_{n-1}}{\hbox{ncap}(K_0)}
$$
with equality (i.e., the most right quantity exists as the infimum of the most left integral) if $K_1=K_0/\hbox{ncap}(K_0)$. This implication plus the review about the problem of prescribing the first variation of volume leads to a consideration of the Minkowski type problem for the first variation of conformal capacity. Below is our result.

\begin{theorem}\label{t53} Let $\mu$ be a nonnegative Borel measure on $\mathbb S^{n-1}$ and
$$
\mathcal{P}_\mu(\xi)=\int_{\mathbb S^{n-1}}\max\{0,\xi\cdot\eta\}\,d\mu(\eta).
$$
If
\begin{equation}\label{e48}
0<\min_{\xi\in\mathbb S^{n-1}}\mathcal{P}_\mu(\xi)\le\max_{\xi\in\mathbb S^{n-1}}\mathcal{P}_\mu(\xi)<\infty
\end{equation}
and
\begin{equation}\label{e49}
\mathcal{P}_\mu(\xi)=\mathcal{P}_\mu(-\xi)\quad\forall\quad\xi\in\mathbb S^{n-1},
\end{equation}
then
\begin{equation}\label{e410}
\mathcal{M}_{\hbox{ncap}}=\inf\left\{\int_{\mathbb S^{n-1}}h_Kd\mu:\ K\in\mathcal C^n\ \&\ \hbox{ncap}(K)\ge 1\right\}>0,
\end{equation}
and there is a $K\in\mathcal{C}^n$ such that
\begin{equation}\label{min}
\mathcal{M}_{\hbox{ncap}}=\int_{\mathbb S^{n-1}}h_K\,d\mu\ \ \&\ \ \hbox{ncap}(K)\ge 1.
\end{equation}
Moreover, if (\ref{e410}) has a minimizer $K\in\mathbb K^n$ with
\begin{equation}\label{e411}
g_\ast(|\nabla u_K|^n\,dS)=\mu\ \&\ \ \hbox{ncap}(K)=1,
\end{equation}
then such a $K$ is unique up to translation.
\end{theorem}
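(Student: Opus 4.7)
The argument proceeds in three movements: a diameter-based positivity bound for $\mathcal{M}_{\hbox{ncap}}$, a Blaschke-selection argument for existence, and an application of the equality case in Brunn--Minkowski (\ref{e41}) for uniqueness. The key preliminary observation is that (\ref{e49}) is equivalent to $\int_{\mathbb{S}^{n-1}}\eta\,d\mu(\eta)=0$, since $\mathcal{P}_\mu(\xi)-\mathcal{P}_\mu(-\xi)=\xi\cdot\int\eta\,d\mu(\eta)$; consequently the functional $K\mapsto\int h_K\,d\mu$ is invariant under translations $K\mapsto K+v$.

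\emph{Lower bound and existence.} Given any $K\in\mathcal{C}^n$ with $\hbox{ncap}(K)\ge 1$, Theorem \ref{t32} forces $D:=\hbox{diam}(K)\ge 2$. Choosing $x,y\in K$ with $|y-x|=D$, setting $\xi_0=(y-x)/D$, and translating so the midpoint of $[x,y]$ is at the origin embeds the segment $[-(D/2)\xi_0,(D/2)\xi_0]$ into $K$, yielding $h_K(\xi)\ge (D/2)|\xi\cdot\xi_0|$ on $\mathbb{S}^{n-1}$. Integrating and using (\ref{e48}) gives
\[
\int_{\mathbb{S}^{n-1}}h_K\,d\mu\;\ge\;\frac{D}{2}\bigl(\mathcal{P}_\mu(\xi_0)+\mathcal{P}_\mu(-\xi_0)\bigr)\;\ge\;2\min_{\mathbb{S}^{n-1}}\mathcal{P}_\mu\;>\;0,
\]
so $\mathcal{M}_{\hbox{ncap}}>0$. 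For a minimizing sequence $\{K_j\}$, re-center each $K_j$ at the midpoint of a longest chord; the same estimate then forces $\hbox{diam}(K_j)\le (\mathcal{M}_{\hbox{ncap}}+1)/\min\mathcal{P}_\mu$, so all $K_j$ lie in a fixed ball. Blaschke's selection theorem extracts a Hausdorff-convergent subsequence $K_j\to K\in\mathcal{C}^n$; continuity of $\hbox{ncap}(\cdot)$ (Remark \ref{r13}) gives $\hbox{ncap}(K)\ge 1$, and uniform convergence of support functions on $\mathbb{S}^{n-1}$ gives $\int h_K\,d\mu=\mathcal{M}_{\hbox{ncap}}$, establishing (\ref{min}).

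\emph{Uniqueness.} Suppose $K_0,K_1\in\mathbb{K}^n$ both satisfy (\ref{e411}), and set $f(t)=\hbox{ncap}((1-t)K_0+tK_1)$ on $[0,1]$. By (\ref{e41}), $f$ is concave with $f(t)\ge 1$ and $f(0)=f(1)=1$. Formula (\ref{e43a}) of Theorem \ref{t41g}, the pushforward identity written as $\int h_{K_i}\,d\mu=\int_{\partial K_0}h_{K_i}(g)|\nabla u_{K_0}|^n\,dS$, and Theorem \ref{p-41} applied separately to $K_0$ and $K_1$ (using that their respective push-forward measures both coincide with $\mu$) give
\[
f'(0)=\frac{\hbox{ncap}(K_0)}{\sigma_{n-1}}\int_{\mathbb{S}^{n-1}}(h_{K_1}-h_{K_0})\,d\mu=\frac{1}{\sigma_{n-1}}(\sigma_{n-1}-\sigma_{n-1})=0.
\]
Concavity together with $f'(0)=0$ force the tangent-line bound $f(t)\le 1$, so $f\equiv 1$ on $[0,1]$. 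The equality clause in (\ref{e41}) then compels $K_1$ to be a translate and dilate of $K_0$, and the $1$-homogeneity of $\hbox{ncap}(\cdot)$ (immediate from the definition (\ref{e17}) via the substitution $y=rx$) combined with $\hbox{ncap}(K_0)=\hbox{ncap}(K_1)=1$ pins the dilation factor to $1$, so $K_1=K_0+v$ for some $v\in\mathbb{R}^n$.

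\emph{Main obstacle.} The delicate step is the compactness argument, which leans on both hypotheses simultaneously: (\ref{e49}) supplies the translation-invariance needed to re-center the minimizing sequence, while (\ref{e48}), which is equivalent to $\mu$ not being supported in any closed hemisphere, is exactly what converts an upper bound on $\int h_K\,d\mu$ into a uniform diameter bound. Without either hypothesis the minimizing sequence could drift off to infinity or collapse into a lower-dimensional set where the constraint $\hbox{ncap}(\cdot)\ge 1$ might be lost in the limit.
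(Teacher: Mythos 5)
Your argument follows the paper's own proof step for step: the diameter lower bound via the embedded diametral segment and \eqref{e48}\textendash\eqref{e49}, Blaschke selection plus continuity of $\hbox{ncap}(\cdot)$ for existence, and the concavity/first-variation argument (Theorems \ref{t41g} and \ref{p-41} plus the equality case of \eqref{e41}) for uniqueness. The only cosmetic differences are that you make explicit the equivalence of \eqref{e49} with $\int_{\mathbb{S}^{n-1}}\eta\,d\mu(\eta)=0$ and spell out the $1$-homogeneity of $\hbox{ncap}$ where the paper simply cites \cite{ColC}.
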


\begin{proof} To prove $\mathcal{M}_{\hbox{cap}}>0$, observe that (\ref{e49}) ensures that
$\int_{\mathbb S^{n-1}}h_K d\mu$ is translation invariant. So, we may assume that the origin is at the midpoint of a diameter of $K\in\mathcal C^n$ with $\hbox{ncap}(K)\ge 1$. Let $2R=\hbox{diam}(K)$. According to Theorem \ref{t32}, we have:
$$
\hbox{ncap}(K)\ge 1\Rightarrow 2R\ge 2\hbox{ncap}(K)\ge 2.
$$
If $\mathbf{e}$ is a unit vector with $\pm R\mathbf{e}\in\partial K$, then $h_K(\xi)\ge R|\mathbf{e}\cdot\xi|$ holds for all $\xi\in\mathbb S^{n-1}$, and hence
$$
2\min_{\mathbb S^{n-1}}\mathcal{P}_\mu\le 2R\mathcal{P}_\mu(\mathbf{e})\le\int_{\mathbb S^{n-1}}R|\mathbf{e}\cdot\xi|\,d\mu(\xi)\le\int_{\mathbb S^{n-1}}h_K\,d\mu.
$$
Using (\ref{e48}), we get $\mathcal{M}_{\hbox{ncap}}>0$.

Furthermore, when $K\in\mathcal C^n$ satisfies
$$
\hbox{ncap}(K)\ge 1\quad\&\quad\int_{\mathbb S^{n-1}}h_K\,d\mu\le 2\mathcal{M}_{\hbox{ncap}},
$$
an upper bound of the diameter of $K$ can be determined through
$$
\hbox{diam}(K)\min_{\mathbb S^{n-1}}\mathcal{P}_\mu=2R\min_{\mathbb S^{n-1}}\mathcal{P}_\mu\le 2\mathcal{M}_{\hbox{ncap}}.
$$
Suppose $\{K_j\}_{j=1}^\infty$ is a sequence in $\mathcal C^n$ that satisfies
$$
\mathcal{M}_{\hbox{ncap}}=\lim_{j\to\infty}\int_{\mathbb S^n}h_{K_j}\,d\mu\ \ \&\ \ \hbox{ncap}(K_j)\ge 1.
$$
Then
$$
2\le 2\hbox{ncap}(K_j)\le \hbox{diam}(K_j)\le \frac{2\mathcal{M}_{\hbox{ncap}}}{\min_{\mathbb S^{n-1}}\mathcal{P}_\mu}\ \ \hbox{as}\ \ j\to\infty.
$$
In accordance with the Blaschke selection principle (see e.g. \cite[Theorem 1.8.6]{Schn}), $\{K_j\}_{j=1}^\infty$ has a subsequence, still denoted by $\{K_j\}_{j=1}^\infty$, that converges to a $K\in\mathcal C^n$ with respect to the Hausdorff distance $d_H(\cdot,\cdot)$. Consequently, $h_{K_j}\to h_K$. Now, the continuity of $\hbox{ncap}(\cdot)$ ensures $\hbox{ncap}(K)\ge 1$ and so (\ref{min}) holds.

Our argument for the uniqueness is inspirited by \cite{CJeL}. Assume now that $K_0, K_1\in\mathbb K^n$ are two minimizers of (\ref{e410}) and satisfy (\ref{e411}). Then
$$
\begin{cases}
g_\ast(|\nabla u_{K_0}|^n\,dS)=g_\ast(|\nabla u_{K_1}|^n\,dS);\\
\hbox{ncap}(K_0)=1=\hbox{ncap}(K_1).
\end{cases}
$$
If $\psi(t)=\hbox{ncap}\big((1-t)K_0+tK_1\big)$, then Theorems \ref{t41g} \& \ref{p-41} yield
\begin{eqnarray*}
\psi'(0)&=&\frac{\hbox{ncap}(K_0)}{\sigma_{n-1}}\int_{\partial K_0}\big(h_{K_1}(g)-h_{K_0}(g)\big)|\nabla u_{K_0}|^n\,dS\\
&=&\sigma_{n-1}^{-1}\Big(\int_{\partial K_0}h_{K_1}(g)|\nabla u_{K_0}|^n\,dS-\sigma_{n-1}\Big)\\
&=&{\sigma_{n-1}}^{-1}\Big(\int_{\mathbb S^{n-1}}h_{K_1}\,g_\ast(|\nabla u_{K_0}|^n\,dS)-\sigma_{n-1}\Big)\\
&=&{\sigma_{n-1}}^{-1}\Big(\int_{\mathbb S^{n-1}}h_{K_1}\,g_\ast(|\nabla u_{K_1}|^n\,dS)-\sigma_{n-1}\Big)\\
&=&{\sigma_{n-1}}^{-1}\Big(\sigma_{n-1}-\sigma_{n-1}\Big)=0.
\end{eqnarray*}
Note that $t\mapsto\psi(t)$ is concave on $[0,1]$. So this function is constant, in particular, one has
\begin{equation}\label{ef}
\hbox{ncap}(K_1)=\psi(1)=\psi(t)=\psi(0)=\hbox{ncap}(K_0).
\end{equation}
 Since the equality of (\ref{e41}) holds, $K_1$ is a translate and a dilate of $K_0$. But  (\ref{ef}) is valid, so $K_1$ is only a translate of $K_0$ thanks to \cite{ColC}.
\end{proof}

\section{Yau's problem for conformal capacities}\label{s7}
\setcounter{equation}{0}

\subsection{Prescribed mean curvature problem}\label{s71} On page 683 of \cite{Yau}, S.-T. Yau posed the following problem: 

``{Let $h$ be a real-valued function on $\mathbb R^3$. Find (reasonable) conditions on $h$ to insure that one can find a closed surface with prescribed genus in $\mathbb R^3$ whose mean curvature (or curvature) is given by $h$. F. Almgren made the following comments: For ``suitable" $h$ one can obtain a compact smooth submanifold $\partial A$ in $\mathbb R^3$ having mean curvature $h$ by maximizing over bonded open sets $A\subset\mathbb R^3$ the quantity
$$
F(A)=\int_A h\,d\mathcal L^3-Area(\partial A).
$$
A function $h$ would be suitable, for example, in case it were continuous, bounded, and $\mathcal L^3$ summable, and $\sup F>0$. However, the relation between $h$ and the genus of the resulting extreme $\partial A$ is not clear.}"

Although not yet completely solved, this problem for mean curvature or Gaussian curvature has a solution at least for the closed surface of genus zero, see \cite{TrW, BaK, HSW} or \cite{Tso1, Tso2}. The following, essentially contained in \cite[Corollary 1.2]{X}, may be regarded as a resolution of Yau's problem in a weak sense - if $I\in L^1(\mathbb R^n)$ is positive and continuous, $k$ is nonnegative integer, $\alpha\in (0,1)$, $S(\cdot)$ stands for the surface area, and 
$$
\mathcal{I}(K)=S(K)-\int_K I\,dV,
$$
then one has:

\begin{itemize}
\item $\mathcal{I}(\cdot)$ attains its infimum over $\mathcal{C}^n$ when and only when there is $K\in\mathcal{C}^n$ such that $\mathcal{I}(K)\le 0$.

\item Suppose $K\in\mathbb K^n$ is a minimizer for $\mathcal{I}(\cdot)$. Then there is a curvature measure $\mu_K$ on $\mathbb S^{n-1}$ such that the so-called weak mean curvature equation
\begin{equation}\label{mean}
\int_{\mathbb S^{n-1}}\phi\,d\mu_K=\int_ {\mathbb S^{n-1}}\phi g_\ast(I\,dS)\quad\forall\ \ \phi\in C(\mathbb S^{n-1})
\end{equation}
holds. Moreover, if $\partial K$ is $C^2$ strictly convex then the classic mean curvature equation $H(K,x)=I(x)$ is valid for all $x\in\partial K$.

\item If $I$ is of $C^{k,\alpha}(\mathbb R^n)$ and $K\in\mathbb K^n$, with $C^2$ strictly convex boundary $\partial K$, is a minimizer for $\mathcal{I}(\cdot)$, then $\partial K$ is of $C^{k+2,\alpha}$.
\end{itemize}

\subsection{Prescribing conformal capacitary curvature}\label{s72} Thanks to the relationship between the surface area and the conformal capacity explored in Section \ref{s33}, as well as the discussion on the Minkowski type problem above, it seems interesting to consider the conformal capacity analogue of Yau's problem. More precisely, using the conformal capacity in place of the surface area we study the functional
$$
\mathcal{J}(K)=\hbox{ncap}(K)-\int_K J\,dV,
$$
thereby obtaining the following result.

\begin{theorem}\label{t71} Let $J\in L^1(\mathbb R^n)$ be positive and continuous, $k$ nonnegative integer, and $\alpha\in (0,1)$.

\item{\rm(i)} $\mathcal{J}(\cdot)$ attains its infimum over $\mathcal{C}^n$ when and only when there exists $K\in\mathcal{C}^n$ such that $\mathcal{J}(K)\le 0$.

\item{\rm(ii)} Suppose $K\in\mathbb K^n$ is a minimizer for $\mathcal{J}(\cdot)$. Then such a $K$ satisfies the so-called weak conformal capacitary curvature equation
\begin{equation}\label{e71}
\Big(\frac{\hbox{ncap}(K)}{\sigma_{n-1}}\Big)\int_{\mathbb S^{n-1}}\phi g_\ast\big(|\nabla u_K|^n\,dS)=\int_{\mathbb S^{n-1}}\phi g_\ast(J\,dS)\ \ \forall\ \phi\in C(\mathbb S^{n-1}).
\end{equation}
Moreover, if $\partial K$ is $C^2$ strictly convex then the so-called conformal capacitary curvature equation $\hbox{ncap}(K)\sigma_{n-1}^{-1}|\nabla u_K(x)|^n=J(x)$ is valid for all $x\in\partial K$.

\item{\rm(iii)} If $J$ is of $C^{k,\alpha}(\mathbb R^n)$ and $K$, with $\partial K$ being $C^2$ strictly convex, is a minimizer for $\mathcal{J}(\cdot)$, then $\partial K$ is of $C^{k+1,\alpha}$.
\end{theorem}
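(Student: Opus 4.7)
The plan is to follow the strategy of the surface-area analogue in \cite{X}, with Theorem \ref{t41g} (Hadamard's formula for $\hbox{ncap}$) replacing the classical first variation of surface area, and the weak continuity of the capacitary surface-area measure from Theorem \ref{th51} replacing its classical counterpart. For (i), one direction is immediate: since $\mathcal{J}(\{p\})=0$ for every singleton $\{p\}\in\mathcal{C}^n$, we have $\inf\mathcal{J}\le 0$, so any minimizer $K^\ast$ satisfies $\mathcal{J}(K^\ast)\le 0$. For the converse, from $\mathcal{J}(K_0)\le 0$ a minimizing sequence $\{K_j\}$ eventually satisfies $\mathcal{J}(K_j)\le 1$, whence $\hbox{ncap}(K_j)\le\|J\|_{L^1}+1$. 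Since each $K_j$ is convex it contains a segment of length $\hbox{diam}(K_j)$, and combining monotonicity of $\hbox{ncap}$ (Remark \ref{r13}) with its $1$-homogeneity yields $\hbox{diam}(K_j)\lesssim\hbox{ncap}(K_j)$. After translating so each $K_j$ meets the origin the sequence lies in a fixed ball; Blaschke's selection theorem extracts a Hausdorff limit $K^\ast\in\mathcal{C}^n$, and the continuity of $\hbox{ncap}$ (Remark \ref{r13}) together with dominated convergence for $\int_{K_j}J\,dV$ identifies $K^\ast$ as a minimizer of $\mathcal{J}$.

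For (ii), fix $L\in\mathbb{K}^n$ and form the Minkowski family $K_t=K+tL$ for $t\ge 0$. Minimality gives $\frac{d}{dt}\mathcal{J}(K_t)\bigm|_{t=0^+}\ge 0$, while Theorem \ref{t41g} and the classical Hadamard formula for a moving domain yield
$$
\frac{d}{dt}\hbox{ncap}(K_t)\Big|_{t=0^+}=\frac{\hbox{ncap}(K)}{\sigma_{n-1}}\int_{\partial K}h_L(g)|\nabla u_K|^n\,dS,\qquad \frac{d}{dt}\int_{K_t}J\,dV\Big|_{t=0^+}=\int_{\partial K}h_L(g)\,J\,dS.
$$
Rewriting in push-forward form on $\mathbb{S}^{n-1}$ reduces the Euler-Lagrange inequality to $\int_{\mathbb{S}^{n-1}}h_L\,d\tau\ge 0$ for $\tau:=\tfrac{\hbox{ncap}(K)}{\sigma_{n-1}}g_\ast(|\nabla u_K|^n\,dS)-g_\ast(J\,dS)$. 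To promote this to the equality $\tau=0$, we exploit that differences $h_{L_1}-h_{L_2}$ of support functions are dense in $C(\mathbb{S}^{n-1})$, realising $h_K+t(h_{L_1}-h_{L_2})$ for small $|t|$ as the support function of a two-sided Wulff-shape perturbation of $K$ and transferring the variation formulas from smooth strictly convex approximations via Theorem \ref{th51}. This gives $\int\varphi\,d\tau=0$ for every $\varphi\in C(\mathbb{S}^{n-1})$, which is precisely (\ref{e71}). When $\partial K$ is $C^2$ strictly convex the Gauss map is a diffeomorphism, so both sides of (\ref{e71}) are absolutely continuous with continuous densities, and pulling back yields the pointwise identity $\hbox{ncap}(K)\sigma_{n-1}^{-1}|\nabla u_K(x)|^n=J(x)$ on $\partial K$.

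For (iii), Theorem \ref{l5c} combined with the free boundary condition $|\nabla u_K|^n=\sigma_{n-1}J/\hbox{ncap}(K)$ keeps $|\nabla u_K|$ bounded and bounded away from $0$ on $\partial K$, so the $n$-Laplace operator is uniformly elliptic near $\partial K$ and the Bernoulli condition is non-degenerate. A partial hodograph transform of Kinderlehrer-Nirenberg type introduces $u_K$ as a coordinate: the free boundary straightens to $\{u_K=0\}$, the $n$-harmonic equation becomes a uniformly elliptic quasilinear equation in divergence form with smooth coefficients, and the gradient condition turns into boundary data inheriting the $C^{k,\alpha}$ regularity of $J$. Iterating Schauder estimates then yields $C^{k+1,\alpha}$ regularity of the hodograph, equivalently $\partial K\in C^{k+1,\alpha}$ after inverting the transform.

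The principal obstacle sits in step (ii): Theorem \ref{t41g} supplies only the one-sided Minkowski derivative $\frac{d}{dt}\hbox{ncap}(K+tL)\bigm|_{t=0^+}$, so by itself it yields merely the Euler-Lagrange inequality $\int h_L\,d\tau\ge 0$. Upgrading this to the equality $\tau=0$ demands a genuine two-sided first variation of $\hbox{ncap}$, and transporting such a formula from the smooth strictly convex setting to arbitrary $K\in\mathbb{K}^n$ rests delicately on the approximation and weak-convergence machinery of Theorems \ref{t41g} and \ref{th51}.
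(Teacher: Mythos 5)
Your outline is essentially the paper's: (i) is a direct-method compactness argument, (ii) is an Euler--Lagrange computation from Hadamard's variation formula (Theorem~\ref{t41g}) using the weak convergence of the capacitary surface measure (Theorem~\ref{th51}), and (iii) bootstraps boundary regularity from the Bernoulli-type condition together with the two-sided gradient bound of Theorem~\ref{l5c}. In (i) you get the diameter bound differently from the paper: it combines $\hbox{ncap}(K)\ge\big(V(K)/\omega_n\big)^{1/n}$ (Theorem~\ref{t31}) with a lower bound on the inradius to force $V(K_j)$ to blow up, whereas you compare a diametral segment inside $K_j$ with the $1$-homogeneity of $\hbox{ncap}$; both deliver $\hbox{diam}(K_j)\lesssim\hbox{ncap}(K_j)$ and are legitimate.

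There is, however, a genuine (though repairable) gap in your (i): you translate each $K_j$ so that it meets the origin before invoking Blaschke, but $\int_{K_j}J\,dV$ is not translation invariant, so the translated family need not remain a minimizing sequence for $\mathcal{J}$ and the Hausdorff limit $K^\ast$ need not minimize. The correct patch, which the paper also does not spell out, is to rule out escape to infinity directly: if $\hbox{dist}(0,K_j)\to\infty$ with $\hbox{diam}(K_j)$ bounded, then $\int_{K_j}J\,dV\to 0$ since $J\in L^1(\mathbb R^n)$, hence $\mathcal{J}(K_j)\ge-\int_{K_j}J\,dV\to 0$, contradicting $\inf\mathcal{J}<0$; and if $\inf\mathcal{J}=0$ any singleton is already a minimizer. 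With that repair your (i) is sound. In (ii) and (iii) you follow the paper's route. The difficulty you flag about obtaining a two-sided first variation from one-sided Minkowski derivatives is exactly what the paper delegates to the Wulff-shape perturbations $K_t=\{x:\ x\cdot\theta\le h_K(\theta)+t\phi(\theta)\}$ with $\phi\in C^1(\mathbb S^{n-1})$, citing \cite{Je96b} and \cite{Tso2}, which is in substance the approximation-plus-weak-convergence program you describe. Your hodograph discussion in (iii) makes explicit a bootstrap step that the paper compresses into a one-line remark after invoking Lieberman and Mou--Yang for $C^{1,\hat\alpha}$ regularity of $u_K$ and Theorem~\ref{l5c} for the non-degeneracy of $|\nabla u_K|$ on $\partial K$.
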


\begin{proof} (i) Due to $J\in L^1(\mathbb R^n)$, we have
$$
\mathcal{J}(K)\ge\hbox{ncap}(K)-\|J\|_{L^1(\mathbb R^n)}\quad\forall\quad K\in\mathcal{C}^n.
$$
Note that if a sequence of balls $\{B_j\}$ converges to a point then $\{\mathcal{J}(B_j)\}$ goes to $0$. So, $\inf_{K\in\mathcal{C}^n}\mathcal{J}(K)\le 0$. Consequently, if $\mathcal{J}(\cdot)$ attains its infimum at $K_0\in \mathcal{C}^n$ then there must be $\mathcal{J}(K_0)=\inf_{K\in\mathcal{C}^n}\mathcal{J}(K)\le 0$. Conversely, suppose there is $K\in\mathcal{C}^n$ such that $\mathcal{J}(K)\le 0$. Then $\inf_{K\in\mathbb K^n}\mathcal{J}(K)\le 0$. If $\{K_j\}$ is a sequence of minimizers for $\mathcal{J}(\cdot)$ with $\mathcal{J}(K_j)<0$ and the inradius of $K_j$ having a uniform lower bound $r_0>0$ (if, otherwise, $K_j$ tends to a set of single point $\{x_0\}$, then $\mathcal{J}(K_j)\to 0$ and hence $\{x_0\}\in \mathcal{C}^n$ is a minimizer). Using this and (\ref{e34}) we get
\begin{equation}\label{e77}
2r_0\le 2\hbox{ncap}(K_j)\le \hbox{diam}(K_j).
\end{equation}
Since
$$
\mathcal{J}(K_j)\ge \Big(\frac{V(K_j)}{\omega_n}\Big)^{\frac{1}{n}}-\|J\|_{L^1(\mathbb R^n)},
$$
if $\hbox{diam}(K_j)$ is unbounded, then (\ref{e77}) is used to imply that $V(K_j)$ is unbounded, and hence $\mathcal{J}(K_j)$ is unbounded from above. But, $\mathcal{J}(K_j)<0$. Therefore, $\hbox{diam}(K_j)$ has a uniform upper bound. Now, taking into account of the above-mentioned Blaschke selection principle, we may get a subsequence of $\{K_j\}$ which is convergent to an element $K_0\in\mathbb K^n$. Clearly, $\mathcal{J}(\cdot)$ is continuous. Thus, $K_0$ is a minimizer of $\mathcal{J}(\cdot)$.

(ii) For $K\in\mathbb K^n$, $t>0$ and $\phi\in C^1(\mathbb S^{n-1})$ let
$$
K_t=\big\{x\in\mathbb R^n:\ x\cdot \theta\le h_K(\theta)+t\phi(\theta)\quad\forall\quad \theta\in\mathbb S^{n-1}\big\}.
$$
Then $K_t\in\mathbb K^n$ and $h_{K_t}=h_K+t\phi$. Using Theorem \ref{t41g} (plus the ideas presented in \cite[Sections 3-4]{Je96b}) as well as Tso's variation formula \cite[(4)]{Tso2} once again, we produce
\begin{equation}\label{e72}
\frac{d}{dt}\mathcal{J}(K_t)\Big|_{t=0}=\Big(\frac{\hbox{ncap}(K)}{\sigma_{n-1}}\Big)\int_{\partial K}\phi(g)|\nabla u_K|^n\,dS-\int_{\partial K}\phi(g) J\, dS.
\end{equation}
Obviously, if $K$ is a minimizer of $\mathcal{J}(\cdot)$, then it is a critical point of $\mathcal{J}(K_t)$ and hence
$\frac{d}{dt}\mathcal{J}(K_t)\Big|_{t=0}=0.$ This and (\ref{e72}) give
\begin{eqnarray*}
\Big(\frac{\hbox{ncap}(K)}{\sigma_{n-1}}\Big)\int_{\mathbb S^{n-1}}\phi g_\ast(|\nabla u_K|^n\,dS)&=&\Big(\frac{\hbox{ncap}(K)}{\sigma_{n-1}}\Big)\int_{\partial K} \phi(g)|\nabla u_K|^n\,dS\\
&=&\int_{\partial K}\phi(g) J\, dS\\
&=&\int_{\mathbb S^{n-1}}\phi g_\ast(J\,dS).
\end{eqnarray*}
Owing to the fact that $\phi\in C^1(\mathbb S^{n-1})$ is arbitrary, we arrive at (\ref{e71}). Furthermore, if $\partial K$ is $C^2$ strictly convex, then $g:\partial K\mapsto \mathbb S^{n-1}$ is a diffeomorphism (cf. \cite{CFG, Giu}), and hence one has
\begin{equation}
\label{e8e}
\Big(\frac{\hbox{ncap}(K)}{\sigma_{n-1}}\Big)|\nabla u_K(x)|^n=J(x)\quad\forall\quad x\in\partial K.
\end{equation}

(iii) Suppose $J\in C^{k,\alpha}(\mathbb R^n)$ with $k$ being a nonnegative integer. Since $\partial K$ is of $C^2$, an application of \cite[Theorem 1]{Lie} and \cite[Theorem 4.1]{MouY} (cf. \cite{GarS, DiB, Tol, UK, GiT}) yields that $u_K\in C^{1,\hat{\alpha}}(K)$ holds for some $\hat{\alpha}\in (0,1)$, and more importantly, the Gauss map from $\partial K$ to $\mathbb S^{n-1}$ is a diffeomorphism. Therefore,  (\ref{e8e}) is true. Using (\ref{e8e}) and $J\in C^{k,\alpha}(\mathbb R^n)$ with $\alpha\in (0,1)$, we obtain that $|\nabla u_K|\big|_{\partial K}$ is of $C^{k,\alpha}$. Note again that $\partial K$ is $C^2$ strictly convex. So, it follows that $\partial K$ is of $C^{k+1,\alpha}$ from the fact that $|\nabla u_K|\big|_{\partial K}$ is bounded above and below by two positive constants (cf. Theorem \ref{l5c}).
\end{proof}

\end{document}